\newtheorem{theorem}{Theorem}[section]
\newaliascnt{lemma}{theorem}
\newtheorem{lemma}[lemma]{Lemma}
\newaliascnt{claim}{theorem}
\newtheorem{claim}[claim]{Claim}
\newaliascnt{conjecture}{theorem}
\newtheorem{conjecture}[conjecture]{Conjecture}
\newaliascnt{proposition}{theorem}
\newtheorem{proposition}[proposition]{Proposition}
\newaliascnt{corollary}{theorem}
\newtheorem{corollary}[corollary]{Corollary}
\newaliascnt{problem}{theorem}
\theoremstyle{definition}
\newaliascnt{definition}{theorem}
\newtheorem{definition}[definition]{Definition}
\newaliascnt{example}{theorem}
\newtheorem{example}[example]{Example}
\theoremstyle{remark}
\newaliascnt{remark}{theorem}
\newtheorem{remark}[remark]{Remark}
\newaliascnt{remarks}{theorem}
\newtheorem{remarks}[remarks]{Remarks}
\numberwithin{equation}{section}
\def\wt{\widetilde}
\def\lra{\longrightarrow}
\def\({$($}
\def\){$)$}
\def\chit{\chi_{\rm top}}
\def\call{\mathcal L}
\def\calo{\mathcal O}
\def\Pic{\text{{\rm Pic\,}}}
\def\rank{\text{{\rm rank\,}}}
\def\oly{\overline{Y}}
\def\im{{\rm Im}}
\def\Alb{{\rm Alb}}
\begin{document}

\title[On the slope conjecture]{On the slope conjecture of Barja and Stoppino for fibred surfaces}

\author{Xin Lu}
\address{Department of Mathematics, East China Normal University, Shanghai, China, 200241}
\curraddr{Institut f\"ur Mathematik, Universit\"at Mainz, Mainz, Germany, 55099}
\email{x.lu@uni-mainz.de}
\thanks{This work is supported by SFB/Transregio 45 Periods, Moduli Spaces and Arithmetic of Algebraic Varieties of the DFG (Deutsche Forschungsgemeinschaft),
and partially supported by National Key Basic Research Program of China (Grant No. 2013CB834202).}

\author{Kang Zuo}
\address{Institut f\"ur Mathematik, Universit\"at Mainz, Mainz, Germany, 55099}
\email{zuok@uni-mainz.de}

\subjclass[2010]{Primary 14D06, 14H10; Secondary 14D99, 14J29}



\keywords{Fibrations, slope inequality, relative irregularity, double cover fibrations}

\phantomsection
\begin{abstract}
\addcontentsline{toc}{section}{Abstract}
Let $f:\,X \to B$ be a locally non-trivial relatively minimal fibration of genus $g\geq 2$ with relative irregularity $q_f$.
It was conjectured by Barja and Stoppino that the slope $\lambda_f\geq \frac{4(g-1)}{g-q_f}$.
On the one hand, we show the lower bound $\lambda_f> \frac{4(g-1)}{g-q_f/2}$,
and also prove Barja-Stoppino's conjecture when $q_f$ is small with respect to $g$.
On the other hand, we construct counterexamples violating the conjectured bound when $g$ is odd and $q_f=(g+1)/2$.
\end{abstract}

\maketitle

\section{Introduction}
A fibred surface, or simply a fibration, is a surjective proper morphism $f:X \to B$
from a non-singular projective surface $X$ onto a non-singular projective curve $B$ with connected fibers.
A general fiber of $f$ is a smooth curve of genus $g\geq 2$.
The fibration is said to be relatively minimal if there is no $(-1)$-curve contained in the fibers of $f$.
Here a curve $C$ is called a $(-k)$-curve
if it is a smooth rational curve with self-intersection $C^2=-k$.
The fibration is called hyperelliptic if its general fiber is a hyperelliptic curve,
smooth if all its fibers are smooth,
isotrivial if all its smooth fibers are isomorphic to each other,
locally trivial if it is both smooth and isotrivial,
and semi-stable if all its singular fibers are reduced nodal curves.

The relative canonical sheaf of $f$ is defined to be $\omega_{f}=\omega_X\otimes f^*\omega_B^{\vee}$,
where $\omega_X$ (resp. $\omega_B$) is the canonical sheaf of $X$ (resp. $B$).
For a relatively minimal fibration $f$, the relative canonical sheaf $\omega_f$ is numerical effective (nef),
i.e., $\omega_f\cdot C \geq 0$ for any curve $C\subseteq X$.
Let $b=g(B)$, $p_g=h^0(X,\,\omega_X)$, $q=h^1(X,\,\omega_X)$,
$\chi(\mathcal O_X)=p_g-q+1$, and $\chit(X)$ be the topological Euler characteristic of $X$.
The basic invariants of $f$ are:
\begin{equation}\label{eqn-invarians-f}
\left\{\begin{aligned}
\chi_f&=\deg f_*\omega_{f}=\chi(\mathcal O_X)-(g-1)(b-1),\\
\omega_{f}^2&=\omega_X^2-8(g-1)(b-1),\\
e_f&=\chit(X)-4(g-1)(b-1).
\end{aligned}\right.
\end{equation}
We will always assume that $f$ is relatively minimal. Under this assumption, these invariants satisfy the following properties:
\begin{eqnarray}
&&12\chi_f=\omega_f^2+e_f.\label{eqnnoether}\\
&&e_f\geq 0;~\text{moreover, $e_f=0$ iff $f$ is smooth}.\nonumber\\
&&\chi_f\geq 0;~\text{moreover, $\chi_f=0$ iff $f$ is locally trivial}.\qquad\nonumber
\end{eqnarray}

If $f$ is not locally trivial, the slope of $f$ is defined to be
$$\lambda_f=\frac{\omega_f^2}{\chi_f}.$$
It follows immediately that $0< \lambda_f\leq 12$.
The main known result is the slope inequality:
\begin{theorem}[Cornalba-Harris-Xiao, \cite{cornalba-harris-88,xiao-87a}]\label{thm-chx}
If $f$ is not locally trivial, then $$\lambda_f\geq \frac{4(g-1)}{g}.$$
\end{theorem}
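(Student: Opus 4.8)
The plan is to follow Xiao's method, proving the equivalent inequality $g\,\omega_f^2\ge 4(g-1)\chi_f$ (recall $K_f^2=\omega_f^2$) by analysing the Harder--Narasimhan filtration of the Hodge bundle together with intersection theory on $X$. First I would set $E=f_*\omega_f$, a vector bundle of rank $g$ on $B$ with $\deg E=\chi_f$, and take its Harder--Narasimhan filtration
\[
0=E_0\subsetneq E_1\subsetneq\cdots\subsetneq E_N=E,
\]
whose graded pieces are semistable of strictly decreasing slopes $\mu_1>\mu_2>\cdots>\mu_N$; write $r_i=\rank E_i$, so $1\le r_1<\cdots<r_N=g$. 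By Fujita's semipositivity theorem $f_*\omega_f$ is nef, hence $\mu_N\ge 0$. A summation by parts records the first invariant as
\[
\chi_f=\sum_{i=1}^{N}(r_i-r_{i-1})\mu_i=\sum_{i=1}^{N}r_i(\mu_i-\mu_{i+1}),\qquad r_0:=0,\ \mu_{N+1}:=0.
\]

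Second, I would extract the geometric content of each step. For every $i$ the evaluation map $f^*E_i\to\omega_f$ is generically surjective onto a subsheaf; removing its base locus yields a decomposition $\omega_f\equiv M_i+Z_i$ with $M_i$ nef and $Z_i\ge 0$ effective. Put $d_i=M_i\cdot F$ for a general fibre $F$. Since $(f_*\omega_f)\otimes k(b)\hookrightarrow H^0(F,\omega_F)$ for general $b\in B$, the restriction $M_i|_F$ moves in a special sub-linear-system of $|\omega_F|$ of projective dimension at least $r_i-1$, so Clifford's theorem gives $d_i\ge 2(r_i-1)$, while at the top step $d_N=\deg\omega_F=2g-2$ with $r_N=g$.

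Third --- and this is the heart of the argument --- I would prove Xiao's intersection inequality
\[
\omega_f^2\ \ge\ \sum_{i=1}^{N}(d_i+d_{i+1})(\mu_i-\mu_{i+1}),
\]
with the boundary convention making the last term a positive multiple of $d_N\mu_N$. The mechanism is to compare the intersection numbers $\omega_f\cdot M_i$ across consecutive steps of the filtration: one uses that each $M_i$ is nef, that its generic-fibre degree is $d_i$, and that the sub-bundle $E_i$ producing $M_i$ has slope at least $\mu_i$; the Hodge index theorem on $X$ then telescopes these relations into the displayed bound.

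Finally I would combine the two expressions. The naive term-by-term comparison $d_i+d_{i+1}\ge\frac{4(g-1)}{g}r_i$ \emph{fails} for small $i$, where $r_i$ is small, so a global estimate is unavoidable: using that the $r_i$ are strictly increasing with $r_N=g$, together with the Clifford bounds $d_i\ge 2r_i-2$ and the end condition $d_N=2g-2$, the problem reduces to a purely numerical inequality in the nonnegative quantities $\mu_i-\mu_{i+1}$, which one shows dominates $\frac{4(g-1)}{g}\chi_f$. I expect this global combinatorial estimate, and the proof of the intersection inequality in the third step, to be the main obstacles; the hyperelliptic and low-rank degenerate configurations, where Clifford's bound is close to equality, require a separate and more delicate treatment. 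An alternative and essentially parallel route is the Cornalba--Harris approach, which deduces the same bound from GIT-stability of the canonically embedded general fibre.
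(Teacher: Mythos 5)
The paper states this result as a citation (Cornalba--Harris, Xiao) and gives no proof of its own; what it does recall, in Section~2, is precisely the machinery your plan is built on, namely the Harder--Narasimhan filtration, the moving parts $M(\mathcal E_i)$ and \autoref{prop-key-inequality-xiao}. So your architecture is the standard Xiao route and is sound. Two corrections, one cosmetic and one substantive. The cosmetic one: the key inequality \eqref{eqn-key-inequality-xiao} is not proved by the Hodge index theorem. The mechanism is that (after an \'etale-type base change of $B$ that lets one ignore $g(B)$) each $N_i:=M(\mathcal E_i)-\mu_iF$ is a nef $\mathbb Q$-divisor, while $M(\mathcal E_{i+1})-M(\mathcal E_i)=Z(\mathcal E_i)-Z(\mathcal E_{i+1})$ is effective; then
\begin{equation*}
N_{i+1}^2-N_i^2=(N_i+N_{i+1})\cdot\big(M(\mathcal E_{i+1})-M(\mathcal E_i)\big)+(d_i+d_{i+1})(\mu_i-\mu_{i+1})\ \geq\ (d_i+d_{i+1})(\mu_i-\mu_{i+1}),
\end{equation*}
and telescoping with $N_{n+1}=\omega_f$ and $N_1^2\geq0$ gives the claim. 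No index theorem is needed.

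The substantive gap is the step you deferred as ``a purely numerical inequality'': it cannot be closed by a single instance of \eqref{eqn-key-inequality-xiao}, and the missing idea is a convex combination of two instances. Clifford gives $d_i\geq 2(r_i-1)$, hence $d_i+d_{i+1}\geq 4r_i-2$ for $i<n$ and $d_n+d_{n+1}=4g-4$; feeding this into the full-index instance and using \eqref{eqn-degree-chi_f} yields only
\begin{equation*}
\omega_f^2\ \geq\ 4\chi_f-2(\mu_1+\mu_n),
\end{equation*}
which is weaker than the target. One must add to it $\tfrac{1}{g-1}$ times the two-index instance $\{1,n\}$, which gives $\omega_f^2\geq(2g-2)(\mu_1+\mu_n)$ exactly as in the first line of \eqref{sect3-l7}; the error term cancels and one gets $\tfrac{g}{g-1}\omega_f^2\geq 4\chi_f$, i.e.\ the assertion. (When $n=1$ the full instance alone already gives $\omega_f^2\geq(4g-4)\mu_1=\tfrac{4(g-1)}{g}\chi_f$.) This weighted-average device is the same one the paper uses throughout Section~3. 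Finally, your worry about hyperelliptic and low-rank configurations is unfounded for this statement: the bound $d_i\geq2(r_i-1)$ is applied uniformly, and the hyperelliptic case is merely where it, and hence the slope inequality, is attained with equality.
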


Moreover,
the equality in the above lower bound can hold only for the hyperelliptic fibrations (cf. \cite{cornalba-harris-88,konno-93,stoppino-08}).
Thus, it is natural to investigate the influence of some properties of the fibration on the behaviour of the slope.
For instance, according to \cite{konno-99,barja-stoppino-08},
one knows that the Clifford index of the general fiber
has some meaning to the lower bound of the slope.
We would like to be concerned about
the following conjecture of Barja and Stoppino
(cf. \cite[Conjecture\,1.1]{barja-stoppino-08})
on the influence of the relative irregularity $q_f:=q-b$ on the lower bound of the slope.
\begin{conjecture}[Barja-Stoppino]\label{conjecturebs}
If $f$ is not locally trivial and $q_f < g-1$, then
\begin{equation}\label{conjectureequ}
\lambda_f\geq \frac{4(g-1)}{g-q_f}.
\end{equation}
\end{conjecture}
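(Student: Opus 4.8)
The plan is to run Xiao's method for the slope inequality (Theorem~\ref{thm-chx}) while keeping track of the Fujita decomposition of $f_*\omega_f$, so that the $q_f$ ``flat'' directions can be subtracted from the effective genus. I would first record the cohomological meaning of the relative irregularity. From the Leray spectral sequence for $\mathcal{O}_S$ along $f$, together with $f_*\mathcal{O}_S=\mathcal{O}_B$ and relative duality $R^1f_*\mathcal{O}_S\cong(f_*\omega_f)^\vee$, one gets
$$q_f=q-b=h^0\bigl(B,(f_*\omega_f)^\vee\bigr),$$
so each independent section is a nonzero morphism $f_*\omega_f\to\mathcal{O}_B$. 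Since $f_*\omega_f$ is nef, such quotients have degree $0$; by Fujita's decomposition theorem we may therefore write $f_*\omega_f=\mathcal{A}\oplus\mathcal{U}$ with $\mathcal{A}$ of strictly positive slope on every quotient and $\mathcal{U}$ unitary flat (hence polystable of slope $0$) of rank $s\geq q_f$.

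Next I would form the Harder--Narasimhan filtration $0=E_0\subsetneq E_1\subsetneq\cdots\subsetneq E_\ell=f_*\omega_f$ with slopes $\mu_1>\cdots>\mu_\ell\geq0$, and set $r_i=\rank E_i$. Because $\mathcal{U}$ has the minimal slope $0$ and is semistable, it is exactly the last graded piece, so $\mu_\ell=0$ and $r_\ell-r_{\ell-1}=s\geq q_f$; in particular the positive part $E_{\ell-1}$ has rank $g-s\leq g-q_f$. Writing $w_i=\mu_i-\mu_{i+1}\geq0$ (with $\mu_{\ell+1}:=0$), Abel summation gives $\chi_f=\sum_i r_iw_i$, while Xiao's inequality (as in the proof of Theorem~\ref{thm-chx}) yields $\omega_f^2\geq\sum_i d_iw_i$, where $d_i$ is the fibre degree of the moving part of the subsystem spanned by $E_i$. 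The index $i=\ell$ contributes nothing to $\omega_f^2$ since $w_\ell=\mu_\ell=0$, so only the terms $i\leq\ell-1$, involving ranks $r_i\leq g-q_f$, survive. The target bound
$$\omega_f^2\geq\frac{4(g-1)}{\,g-q_f\,}\,\chi_f$$
would then follow from a purely numerical inequality among the $(r_i,d_i,\mu_i)$, attacked by the Clifford-type estimates $d_i\geq 2r_i-2$ combined with the terminal datum, exactly as for Theorem~\ref{thm-chx} but with the top rank $g$ replaced by the reduced rank $g-s\leq g-q_f$ of the positive part.

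The main obstacle is precisely the control of the fibre degree $d_{\ell-1}$ of the positive part once its rank has dropped to $g-s$. The improvement of the denominator from $g$ to $g-q_f$ is supposed to come from $E_{\ell-1}$ still cutting out a fibre image of degree close to $2g-2$ while its rank is only $g-s$; but the $s\geq q_f$ flat sections remain sections of $\omega_F$ on each fibre $F$, and discarding them projects the canonical curve from an $(s-1)$-plane, so $d_{\ell-1}$ may drop well below $2g-2$ and the Clifford estimate $d_i\geq 2r_i-2$ may no longer be tight enough to force the sharpened slope. This is not a bookkeeping defect but a \emph{genuine} phenomenon: for $g$ odd and $q_f=(g+1)/2$ the inequality must fail, the counterexamples arising as double covers. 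I therefore expect the argument to succeed only under a smallness hypothesis on $q_f$, and the crux will be to make quantitative exactly how much of the flat part can be absorbed — pinning down the threshold on $q_f$ (in terms of $g$) below which the positive part still dominates the canonical image tightly enough for the Xiao-type sum to deliver the denominator $g-q_f$.
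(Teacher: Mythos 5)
You correctly observe that \eqref{conjectureequ} cannot be proved as stated --- the paper itself only establishes it for $q_f\leq g/9$ (\autoref{thm-main}(i)) and refutes it for $q_f=(g+1)/2$ --- and your use of relative duality plus Fujita's theorem to force $\mu_n=0$ and $r_{n-1}\leq g-q_f$ is exactly the paper's starting point (via \cite{fujita-78}). However, the obstacle you flag at the end is not merely ``the crux to be made quantitative'': it is fatal to the specific mechanism you propose. Running Xiao's inequality \eqref{eqn-key-inequality-xiao} with the Clifford-type bound $d_i\geq 2r_i-2$ produces coefficients of the shape $d_i+d_{i+1}\geq 4r_i-2$ in front of $(\mu_i-\mu_{i+1})$, while $\chi_f=\sum r_i(\mu_i-\mu_{i+1})$; the target bound requires coefficients at least $\frac{4(g-1)}{g-q_f}\,r_i$, which strictly exceeds $4r_i$ as soon as $q_f\geq 2$. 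No bookkeeping of the flat summand can close a deficit that occurs term by term in every graded piece; this is precisely why Xiao's original argument stops at $\lambda_f\geq 4$ for $q_f>0$, and why the case $q_f\leq 1$ is the only one your scheme reaches.

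The paper's partial proof rests on two further inputs absent from your plan. First, the second multiplication map $S^2(f_*\omega_f)\to f_*\bigl(\omega_f^{\otimes 2}\bigr)$, combined with $\deg f_*\bigl(\omega_f^{\otimes 2}\bigr)=\omega_f^2+\chi_f$, the general position theorem and the base-point-free pencil trick (\autoref{lemma-second-1} and \autoref{lemma-second-2}), boosts the effective coefficient to roughly $\tfrac92 r_i$, yielding $\lambda_f\geq \tfrac{9(g-1)}{2g}\geq\tfrac{4(g-1)}{g-q_f}$ exactly when $q_f\leq g/9$. Second, this boost fails when the canonical system of the fibre factors through a degree-two map onto a curve of small genus, so double cover fibrations of type $(g,\gamma)$ with $\gamma<g/3$ must be handled separately, via the local invariants of the canonical resolution (\autoref{thminvariants-double-fibration}) and Albanese-theoretic constraints on the branch divisor (\autoref{prop-restriction-invariants}, \autoref{lem-cai}). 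Without these two ingredients your approach cannot determine any threshold on $q_f$, let alone the one the paper obtains.
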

The first result in the direction is due to Xiao \cite[Theorem\,3]{xiao-87a}, where he proved
that if $q_f > 0$, then $\lambda_f \geq 4$ and the equality can hold only when $q_f = 1$.
In \cite[Theorem\,1.3]{barja-stoppino-08},
Barja and Stoppino considered the influence of the Clifford index ${\rm Cliff}(f)$
of the general fiber and
the relative irregularity $q_f$ on the lower bound of the slope simultaneously,
and proved that
\begin{equation*}
\lambda_f\geq \frac{4(g-1)}{g-[m/2]},
\end{equation*}
where $m=\min\big\{{\rm Cliff}(f),\,q_f\big\}$
and $[\bullet]$ stands for the integral part.
When the Clifford index ${\rm Cliff}(f)$ is large,
this shows that the lower bound $\lambda_f$ is increasing
with the relative irregularity $q_f$
and it is close to the conjectured bound.
In \cite[Corollary\,1.5]{lu-zuo-13}, we proved the above conjecture for hyperelliptic fibrations.
This conjecture remains open in the general case.

Our first main result is a lower bound on the slope, which increases with the relative irregularity $q_f$.
\begin{theorem}\label{thm-main-1}
	Let $f$ be a fibration of genus $g\geq 2$ which is not locally trivial. If $q_f>0$, then
	\begin{equation}\label{eqn-main-1}
	\lambda_f> \frac{4(g-1)}{g-q_f/2}.
	\end{equation}
\end{theorem}
Note that the above lower bound improves Barja-Stoppino's \cite{barja-stoppino-08}.
Our next main result is towards \autoref{conjecturebs}.
%
\begin{theorem}\label{thm-main-2}
Let $f$ be a fibration of genus $g\geq 2$ which is not locally trivial.
\begin{enumerate}
\item[(i)] If $q_f \leq g/9$, then \eqref{conjectureequ} holds.
\item[(ii)] If $g$ is odd and $q_f=(g+1)/2$, then there exist fibrations violating \eqref{conjectureequ}.
\end{enumerate}
\end{theorem}

Pirola constructed in \cite{pirola-92} the first example which does not satisfy \eqref{conjectureequ},
see also \cite[Remark\,4.6]{barja-stoppino-08}.
To our knowledge, the only known counterexamples to the bound \eqref{conjectureequ} belong to the extremal case $q_f=g-1$.
According to \cite[Corollary\,4]{xiao-87a}, the genus of fibrations with $q_f=g-1$ is bounded from above ($g\leq 7$).
In our construction of the counter examples, the genus has no upper bound.

\vspace{2mm}
The main idea of the proof of the lower bound on the slope is a combination of
Xiao's technique \cite{xiao-87a} and the second multiplication map.
Such a combination has been already applied to study the influence of the gonality of a general fiber
on the lower bound of the slope and the Severi problem \cite{lu-zuo-15-0,lu-zuo-15b}. 
It turns out that the theorem follows from the combination of these two techniques
if the fibration $f$ is not a double cover fibration. 
Hence we are reduced to study the double cover fibrations.

Double cover fibrations have already been studied earlier by many authors,
see \cite{barja-01,barja-zucconi-01,cornalba-stoppino-08,stoppino-08} etc.
We first define certain local relative invariants for a double cover fibration and show that
the basic invariants as in \eqref{eqn-invarians-f} can be expressed by these local relative invariants
and relative invariants of the quotient fibration (cf. \autoref{thminvariants-double-fibration}).
Then we study influence of the irregularity of the double cover on these local relative invariants
with the help of the Albanese map (cf. \autoref{prop-restriction-invariants}),
which enables us to deduce the required lower bounds on the slope of a double cover fibration.

\vspace{2mm}
Our paper is organized as follows.
In \autoref{sec-main}, we prove the lower bounds
on the slope \big(\autoref{thm-main-1} and \autoref{thm-main-2}\,(i)\big).
In \autoref{sec-non-hyper}, we mainly study the lower bound on the slope of the non-double cover fibrations using a
a combination of Xiao's technique \cite{xiao-87a} and the second multiplication map.
In \autoref{sec-double} we consider the lower bound on the slope of the double cover fibrations.
Finally, in \autoref{sec-examples} we provide the counterexamples to \eqref{conjectureequ}.

\section{Proof of the lower bounds}\label{sec-main}
In this section, we prove the lower bounds on the slope,
i.e., we prove \autoref{thm-main-1} and \autoref{thm-main-2}\,(i).
It is based on certain technical lemmas, which will be proved later.

\begin{definition}\label{def-double-cover}
The fibration $f$ is said to be a double cover fibration of type $(g, \gamma)$
if there is a fibration $h':\,Y' \to B$ and a rational map
$\pi:\,X \dashrightarrow Y'$ ($Y'$ may be singular)
such that the general fiber of $h'$ is a genus-$\gamma$ curve, $\deg\pi=2$ and $h'\circ\pi=f$.
$$\xymatrix{
X \ar@{-->}[rr]^-{\pi}\ar[dr]_-{f} && Y'\ar[dl]^-{h'}\\
&B&
}$$
We remark that there might exist more than one double cover fibration structure on a given double cover fibration.
\end{definition}

\begin{definition}\label{def-hn-filtra}
For any locally free sheaf $\mathcal E$ on a smooth projective curve $B$,
the slope of $\mathcal E$ is defined to be the rational number
$\mu(\mathcal E)=\deg(\mathcal E)/{\rank(\mathcal E)}.$
The sheaf $\mathcal E$ is said to be semi-stable,
if for any coherent subsheaf $0\neq\mathcal E'\subsetneq\mathcal E$
we have $\mu(\mathcal E')\leq\mu(\mathcal E)$.
The Harder-Narasimhan (H-N) filtration of $\mathcal E$ is the following unique filtration:
 \begin{equation}\label{eqnharder-nara}
 0=\mathcal E_0\subset \mathcal E_1 \subset \cdots \subset \mathcal E_n=\mathcal E,
 \end{equation}
 such that:
\begin{list}{}
{\setlength{\labelwidth}{0.6cm}
\setlength{\leftmargin}{0.7cm}}
 \item[(i)] the quotient $\mathcal E_i/\mathcal E_{i-1}$ is a locally free semi-stable sheaf for each $i$;
 \item[(ii)] the slopes are strictly decreasing
 $\mu(\mathcal E_i/\mathcal E_{i-1})>\mu(\mathcal E_j/\mathcal E_{j-1})$ if $i>j$.
\end{list}
The H-N filtration always exists.
In particular, the H-N filtration exists for $\mathcal E=f_*\omega_f$,
and in this case we write
$$\mu_i=\mu(\mathcal E_i/\mathcal E_{i-1}),\qquad
r_i=\rank(\mathcal E_i), \qquad \delta=g-r_{n-1}.$$
By definition one has
$$\delta \geq q_f.$$
\end{definition}

\begin{lemma}\label{thm-3-1}
Let $f$ be a locally non-trivial non-hyperelliptic fibration of genus $g\geq 3$.
Assume that either $f$ is not a double cover fibration, or $f$ is a double cover fibration such that
$\gamma\geq g/4$ for any possible double cover fibration structure of type $(g,\gamma)$ on $f$.
If $\mu_n=0$, then
\begin{equation}\label{eqn-3-thm}
\lambda_f >
\left\{\begin{aligned}
&\frac{18g-47\delta}{4g-11\delta}\cdot \frac{g-1}{g+1},&\qquad&\text{if~}~\delta\leq \frac{2g}{21};\\[1mm]
&\frac{72g-46\delta}{16g-13\delta}\cdot \frac{g-1}{g+1},&&\text{if~}~\frac{2g}{21}\leq \delta \leq \frac{4g}{7}.
\end{aligned}\right.
\end{equation}
\end{lemma}

\begin{lemma}\label{thm-3-2}
Let $f$ be the same as in \autoref{thm-3-1}.
If $\delta\geq \frac{2(g+8)}{9}$, then
\begin{equation}\label{eqn-thm-3-2}
\lambda_f> \frac{4(g-1)}{g-\delta/2}.
\end{equation}
\end{lemma}

\begin{lemma}\label{prop-2-21}
	Let $f:\,X \to B$ be a locally non-trivial double cover fibration of type $(g,\gamma)$ with $g\geq 4\gamma+1$,
	and $h:\,Y\to B$ be the associated quotient fibration as in \autoref{figure-5-1}.
	Assume that either $\gamma=1$, or $h$ is locally trivial, or
	$$\lambda_h> \frac{4(\gamma-1)}{\gamma-q_h/2}.$$
	Then
	\begin{equation}\label{eqn-2-22}
	\lambda_f> \frac{4(g-1)}{g-q_f/2}.
	\end{equation}
\end{lemma}

\begin{lemma}\label{thm-3-4}
	Let $f$ be a locally non-trivial non-hyperelliptic fibration of genus $g\geq 3$.
    If $q_f\leq g/2$ and $f$ is a double cover fibration of type $(g,\gamma)$ with $g\geq 4\gamma-2$,
    then $\lambda_f\geq \frac{4(g-1)}{g-q_f}$.
\end{lemma}

The proofs of the above four technical lemmas will be postponed in Sections \ref{sec-pf-thm-3-1},
\ref{sec-pf-thm-3-2}, \ref{sec-pf-prop-2-21}, \ref{sec-pf-thm-3-4} respectively.
Based on the above lemmas, we will prove the lower bounds on the slope of fibrations with positive irregularity.

\begin{proposition}\label{thm-3-3}
	Let $f$ be a locally non-trivial non-hyperelliptic fibration of genus $g\geq 3$.
	Assume that either $f$ is not a double cover fibration, or $f$ is a double cover fibration such that
	$\gamma-1\geq (g-1)/4$ for any possible double cover fibration structure of type $(g,\gamma)$ on $f$.
	If $q_f\neq 0$, then
	\begin{equation}\label{eqn-2-21}
	\lambda_f\geq \frac92.
	\end{equation}
\end{proposition}
\begin{proof}
     Because $q_f\neq 0$, we may construct \'etale covers of $X$ which are still fibred over $B$:
     $$\xymatrix{\wt X \ar[rr]^-{\pi} \ar[dr]_{\tilde f} && X \ar[dl]^{f}\\
     	&B&}$$
     Since $\pi$ is \'etale, the induced fibration $\tilde f$ is still not trivial and $\lambda_{\tilde f}=\lambda_f$.
     Moreover, by Riemann-Hurwitz formula one has
     $$\tilde g=\deg\pi\cdot (g-1)+1, \qquad \text{where $\tilde g$ is the genus of a general fiber of $\tilde f$}.$$
     In fact, we can even construct a Galois \'etale cover $\pi$ with $\deg \pi$ being prime.
     
     We claim that
     \begin{quote}
     	If $\pi$ is a Galois \'etale cover such that $\deg \pi$ is prime and sufficiently large,
     	then either $\tilde f$ is not a double cover fibration, or $\tilde f$ is a double cover fibration such that
     	$\tilde \gamma-1\geq (\tilde g-1)/4$ for any possible double cover fibration structure of
     	type $(\tilde g,\tilde \gamma)$ on $\tilde f$.
     \end{quote}
     Assume the above claim. Then \eqref{eqn-2-21} follows immediately by applying \autoref{thm-3-1} to the new fibration $\tilde f$.
     It remains to prove the above claim.
     
     We prove the above claim by contradiction
     If $\tilde f$ is a double cover fibration of type $(\tilde g,\tilde \gamma)$ with $\tilde \gamma-1< (\tilde g-1)/4$,
     then there is an involution $\tilde \sigma$ on $\wt X$.
     Let $G$ be the automorphism subgroup of $\wt X$ induced by the Galois cover $\pi$,
     and $\wt G$ the automorphism subgroup generated by $G$ and $\tilde \sigma$.
     If $G$ is normal in $\wt G$, then $\tilde \sigma$ induces an involution on $X$, which realizes $X$ as a double cover fibration
     of type $(g,\gamma)$ with $\gamma-1< (g-1)/4$, contradicting the assumption.
     Hence $G$ is not normal in $\wt G$. Since $p:=|G|=\deg \pi$ is prime, it follows that
     $\wt G\geq p(p+1)$ by Sylow's theorem.
     However, when $p$ is large,
     this contradicts the linear bound on the automorphism group of curves (cf. \cite[Exercise\,IV.2.5]{hartshorne-77}):
     indeed, it is clear that $\wt G$ acts faithfully on the general fiber of $\tilde f$, from which it follows that
     $$p(p+1)\leq |\wt G| \leq 84(\tilde g-1)=84p(g-1).$$
     This gives a contradiction when $p\geq 84(g-1)$.
     Thus we complete the proof of the claim, and hence also the proposition.
     \end{proof}

\begin{proof}[{Proof of \autoref{thm-main-1}}]
	We prove by induction on the genus $g$.
	
	When $g=2$, then $f$ is hyperelliptic.
	Hence \eqref{eqn-main-1} follows from \cite[Corollary\,1.5]{lu-zuo-13}.
	
	We now assume that $g>2$.	
	If either $f$ is not a double cover fibration, or $f$ is a double cover fibration such that
	$\gamma\geq g/4$ for any possible double cover fibration structure of type $(g,\gamma)$ on $f$,
	then \eqref{eqn-main-1} follows directly from \eqref{eqn-thm-3-2} since $\delta \geq q_f$ by definition.
	Thus we may assume that $f$ is a double cover fibration of type $(g,\gamma)$ with $g\geq 4\gamma+1$.
	Let $h:\,Y\to B$ be the associated quotient fibration as in \autoref{figure-5-1}.
	By induction, we may assume that
		$$\lambda_h> \frac{4(\gamma-1)}{\gamma-q_h/2},\qquad\text{if $\gamma\geq 2$ and $h$ is locally non-trivial}.$$
	Hence according to \autoref{prop-2-21}, one proves \eqref{eqn-main-1}.
\end{proof}
\begin{proof}[Proof of \autoref{thm-main-2}\,{\rm(i)}]
	First by \autoref{thm-chx}, we may assume that $q_f>0$.
	
	Consider next the case when $f$ is not a double cover fibration, or when $f$ is a double cover fibration such that
	$\gamma-1\geq (g-1)/4$ for any possible double cover fibration structure of type $(g,\gamma)$ on $f$.
	Then \eqref{conjectureequ} follows from \eqref{eqn-2-21} since $q_f\leq g/9$.
	
	Finally, we consider the case when $f$ is a double cover fibration of type $(g,\gamma)$ with $g\geq 4\gamma-2$.
	In this case, \eqref{conjectureequ} follows from \autoref{thm-3-4}.
\end{proof}

\begin{remarks}
	(i) The assumption $q_f\leq g/9$ in \autoref{thm-main-2}\,{\rm(i)} might be relaxed a little.
	But the proof requires a much more complicated computation.
	
	(ii) We only deal with the case when $q_f$ is small with respect to $g$. If $q_f$ is big,  we refer to
	\cite[Theorem\,3.2]{barja-zucconi-01} for a similar lower bound on the slope.
\end{remarks}

\section{Slope of non-hyperelliptic fibrations}\label{sec-non-hyper}
In this section, we consider the lower bound on the slope of the non-hyperelliptic fibrations
and double cover fibrations of type $(g,\gamma)$ with $g$ is not big with respect to $\gamma$ (e.g., $g\leq 4\gamma$).
The main techniques are Xiao's technique \cite{xiao-87a} and the second multiplication map.
We first review these two techniques in \autoref{sec-pre};
and then prove \autoref{thm-3-1} (resp. \autoref{thm-3-2}) in \autoref{sec-pf-thm-3-1} (resp. \autoref{sec-pf-thm-3-2}).

\subsection{Preliminaries}\label{sec-pre}
In this subsection, we briefly review
Xiao's technique \cite{xiao-87a} and the second multiplication map developed in \cite{lu-zuo-15-0}.
Both techniques are based on the Harder-Narasimhan (H-N) filtration on the direct image sheaf $f_*\omega_f$,
which we recall first.

Let $\mathcal E$ be a (non-zero) locally free sheaf over $B$.
It is said to be positive (resp. semi-positive), if for any quotient sheaf  $\mathcal E \twoheadrightarrow \mathcal Q \neq 0$, one has $\deg \mathcal Q >0$ (resp. $\deg \mathcal Q \geq0$).
Define
$$\mu_f(\mathcal E)=\max\{\deg \mathcal F~|~\mathcal E \otimes \mathcal F^{\vee} \text{~is semi-positive}\}.$$
Then $\mathcal E$ is positive (resp. semi-positive) if and only if $\mu_f(\mathcal E)>0$ (resp. $\mu_f(\mathcal E)\geq0$).

It is easy to see that $\mu_f(\mathcal E_i)=\mu_i$.
In particular, $\mu_f(f_*\omega_f)=\mu_n\geq 0$ due to the semi-positivity of $f_*\omega_f$.
Moreover, one has
\begin{equation}\label{eqn-degree-chi_f}
\chi_f=\sum_{i=1}^{n}r_i(\mu_i-\mu_{i+1}), \quad\text{where~}r_i:=\rank \mathcal E_i\text{~and~}\mu_{n+1}:=0.
\end{equation}

\begin{definition}[\cite{xiao-87a}]\label{defofN(F)}
Let $\mathcal E'$ be any locally free subsheaf of $f_*\omega_f$.
The fixed and moving parts of $\mathcal E'$, denoted by $Z(\mathcal E')$ and $M(\mathcal E')$ respectively, are defined as follows.
Let $\call$ be a sufficiently ample line bundle on $B$ such that the sheaf $\mathcal E'\otimes\call$ is generated by its global sections,
and $\Lambda(\mathcal E')\subseteq |\omega_f\otimes f^*\call|$ be the linear subsystem corresponding to sections in $H^0(B,\,\mathcal E'\otimes\call)$.
Then we define $Z(\mathcal E')$ to be the fixed part of $\Lambda(\mathcal E')$, and $M(\mathcal E')=\omega_f-Z(\mathcal E')$.
Note that the definitions do not depend on the choice of $\call$.
\end{definition}

For a general fiber $F$ of $f$, let
\begin{equation}\label{eqn-def-iota_i}
\iota_i:~F \lra \Gamma_i \subseteq \mathbb P^{r_i-1}
\end{equation}
be the map defined by the restricted linear subsystem $\Lambda(\mathcal E_i)\big|_{F}$ on $F$ if $r_i\neq 1$,
where $\mathcal E_i\subseteq f_*\omega_f$ is any subsheaf in
the H-N filtration of $f_*\omega_f$ in \eqref{eqnharder-nara}.
Let $d_i=M(\mathcal E_i)\cdot F$, and $\gamma_i$ be the geometric genus of $\Gamma_i$.
For convention, we define $d_{n+1}=2g-2$.
It is clear that $\iota_i$ factors through $\iota_j$ if $i\leq j$, from which it follows that
\begin{equation}\label{eqn-factor-ij}
\left\{\begin{aligned}
&\text{$\deg(\iota_j)$ divides $\deg(\iota_i)$, $d_j\geq d_i$ and $\gamma_j\geq \gamma_i$,}\quad \forall~i\leq j;\\
&\text{moreover, $\gamma_i=\gamma_j$ if $\deg(\iota_i)=\deg(\iota_j)$.}
\end{aligned}\right.
\end{equation}

\begin{lemma}\label{lemma-d_i}
If $\iota_i$ is not birational, then
\begin{equation}\label{eqn-d_i-non-bi}
d_i\geq \deg(\iota_i)\cdot \min\big\{2(r_i-1),~r_i+\gamma_i-1\big\}.
\end{equation}
If $\iota_i$ is birational, then
\begin{equation}\label{eqn-d_i-bi}
d_i\geq \min\left\{3r_i-5,~\frac{g}{2}+\frac{3r_i}{2}-2\right\}.
\end{equation}
\end{lemma}
\begin{proof}
Let $\tau_i:\,\wt \Gamma_i \to \Gamma_i$ be the normalization, and $D_i=\tau_i^*\big(\mathcal O_{\mathbb P^{r_i-1}}(1)\big)\in\Pic\big(\wt \Gamma_i\big)$
be the pulling-back of the hyperplane section. Then \eqref{eqn-d_i-non-bi}
follows from the facts that $d_i=\deg(\iota_i)\cdot \deg(D_i)$, and
$$\deg(D_i)\geq \left\{\begin{aligned}
&h^0\big(\wt\Gamma_i,\,D_i\big)+\gamma_i-1\geq r_i+\gamma_i-1,&\quad&\text{if~}h^1\big(\wt\Gamma_i,\,D_i\big)=0;\\
&2\left(h^0\big(\wt\Gamma_i,\,D_i\big)-1\right)\geq 2(r_i-1), &&\text{if~}h^1\big(\wt\Gamma_i,\,D_i\big)\neq0.
\end{aligned}\right.$$
Note that we use Clifford's theorem on special divisors above.

To prove \eqref{eqn-d_i-bi}, we apply Castelnuovo's bound (cf. \cite[\S\,III.2]{acgh-85}) which asserts that
\begin{equation}\label{eqn-castelnuovo}
d_i\geq \frac{g}{m_i}+\frac{(m_i+1)}{2}\cdot s_i -m_i\geq \frac{g}{m_i}+\frac{(m_i+1)}{2}\cdot r_i -m_i,
\end{equation}
where $s_i=h^0\big(F,\,M(\mathcal E_i)|_F\big)\geq r_i$ and $m_i=\left[\frac{d_i-1}{s_i-2}\right]$.
Hence \eqref{eqn-d_i-bi} follows immediately.
\end{proof}

\begin{lemma}\label{lem-2-1}
Assume that either $\deg(\iota_i)\neq 2$, or $\deg(\iota_i)=2$ and $\gamma_i\geq g/6$.
If $d_i< g-1$, then $d_i\geq 3(r_i-1)$.
\end{lemma}
\begin{proof}
It is clear if $\deg(\iota_i)\geq 3$.
If $\deg(\iota_i)=2$, then by \eqref{eqn-d_i-non-bi} together with the assumption $\gamma_i\geq g/6$,
one obtains $$g-2\geq d_i\geq \min\big\{4(r_i-1), 2(r_i-1)+g/3\big\},
\quad\Longrightarrow\quad g\geq 3r_i.$$
Hence $d_i\geq \min\big\{4(r_i-1), 2(r_i-1)+g/3\big\}\geq 3(r_i-1)$.

If $\deg(\iota_i)=1$, then $r_i\geq 3$, and according to Castelnuovo's bound \eqref{eqn-castelnuovo} one has
$$d_i\geq \left\{
\begin{aligned}
&m_i(r_i-2)+1 \geq 3r_i-3, &\quad&\text{if~}m_i\geq 5;\\
&4r_i-7 \geq 3r_i-3, &\quad&\text{if~}m_i=4\text{~and~}r_i\geq 4;\\
&\frac{g}{3}+2r_i-3, \quad\Longrightarrow\quad d_i> 3r_i-4,&\quad&\text{if~}m_i=3;\\
&\frac{g}{2}+\frac{3r_i}{2}-2, \quad\Longrightarrow\quad d_i> 3r_i-3,&\quad&\text{if~}m_i=2.
\end{aligned}\right.$$
We use the assumption $g> d_i+1$ when $m_i=3$ or $2$ above.
To complete the proof, it remains to consider the case when $m_i=4$ and $r=r_i=3$.
As $\iota_i$ is birational, by the genus formula for plane curves, one obtains that
$$d_i+1< g\leq \frac{(d_i-1)(d_i-2)}{2},$$
from which it follows that $d_i\geq 6=3(r_i-1)$ as required.
\end{proof}

\begin{remark}\label{rem-2-2}
Assume that either $\deg(\iota_i)\neq 2$, or $\deg(\iota_i)=2$ and $\gamma_i\geq g/6$.
If $d_i=g-1$ or $g$, then one can show similarly that $d_i\geq 3r_i-4$.
\end{remark}

\begin{corollary}\label{lemma-d_i-2}
Assume that either $\deg(\iota_i)\neq 2$, or $\deg(\iota_i)=2$ and $\gamma_i\geq g/6$.
If $r$ is an integer such that $r_i\geq r$ and $g>3(r-1)$,
then $d_i\geq 3(r-1)$.
\end{corollary}
\begin{proof}
Assume that $d_i<3(r-1)\leq 3(r_i-1)$.
Hence by \autoref{lem-2-1}, $d_i\geq g-1$.
Thus $3(r-1)\geq g$, which contradicts the assumption.
\end{proof}

The next proposition, which is due to Xiao, is crucial to the study of the slope of fibrations.
\begin{proposition}[\cite{xiao-87a}]\label{prop-xiao}
For any sequence of indices $1\leq i_1 <\cdots <i_{k}\leq n$, one has
\begin{equation}\label{eqn-xiao}
\omega_f^2\geq \sum_{j=1}^{k}\big(d_{i_j}+d_{i_{j+1}}\big)\big(\mu_{i_j}-\mu_{i_{j+1}}\big),\quad\text{where $i_{k+1}=n+1$.}
\end{equation}
In particular, one has
\begin{equation}\label{eqn-xiao-1}
\omega_f^2\geq \sum_{i=1}^{n}\big(d_{i}+d_{i+1}\big)\big(\mu_{i}-\mu_{i+1}\big).
\end{equation}
\end{proposition}

\begin{corollary}\label{cor-xiao-l-delta}
If $\mu_n=0$, then
\begin{equation}\label{eqn-cor-xiao}
\omega_f^2> \frac{(2g-2)^2}{(2g-2)\cdot r_{n-1}-d_i\cdot (r_{n-1}-r_{i-1})}\cdot \chi_f,\qquad \forall~1< i < n.
\end{equation}
\end{corollary}
\begin{proof}
According to \eqref{eqn-degree-chi_f}, one has
$$\chi_f \leq \sum_{j=1}^{i-1}r_i(\mu_j-\mu_{j+1})+\sum_{j=i}^{n-1}r_{n-1}(\mu_j-\mu_{j+1}) =r_{i-1}\cdot \mu_1+(r_{n-1}-r_{i-1})\cdot \mu_i.$$
By \eqref{eqn-xiao}, one has
$$\begin{aligned}
\omega_f^2&~\geq (d_1+d_i)\cdot (\mu_1-\mu_i)+ (2g-2+d_i)\cdot \mu_i \geq d_i\cdot \mu_1+(2g-2)\cdot \mu_i.
\end{aligned}$$
Combining the above inequalities together with Konno's bound \cite[(2.6)]{konno-94}
\begin{equation}\label{eqn-konno}
\omega_f^2>(2g-2)\mu_1,
\end{equation}
one gets
$$\left(\frac{r_{n-1}-r_{i-1}}{2g-2}+\frac{r_{i-1}-d_i\cdot \frac{r_{n-1}-r_{i-1}}{2g-2}}{2g-2}\right)\cdot\omega_f^2> \chi_f.$$
By rearrangement, we obtain \eqref{eqn-cor-xiao}.
\end{proof}

The next proposition on the lower bound of $\omega_f^2$ is based on the second multiplication map (cf. \cite[\S\,2.2]{lu-zuo-15-0}):
$$\varrho:\,S^2(f_*\omega_f) \lra f_*\big(\omega_f^{\otimes 2}\big).$$
\begin{proposition}\label{prop-second-mult}
Assume that the general fiber $F$ is non-hyperelliptic, $\iota_{n-1}$ is birational and $\mu_n=0$. Then
\begin{equation}\label{eqn-second-mult}
\omega_f^2\geq \sum_{i=1}^{n-1}(2\theta_i-r_i)(\mu_i-\mu_{i+1})+\sum_{i=\tilde l}^{n-1}\tilde \theta_i(\mu_i-\mu_{i+1}),
\end{equation}
where
\begin{eqnarray}
\tilde l&=&\min\left\{i~\Big|~r_i+g\geq 2r_{n-1},~\iota_i
\text{~is birational, and~}r_i\geq \frac{g}{3}+2\right\};\label{eqn-def-tilde-l}\\[1mm]
\theta_i&=&\left\{\begin{aligned}
&1&\quad&\text{if~}i=1\text{~and~}r_1=1,\\
&\min\{3r_i-3,\,2r_i+\gamma_i-1\},&& \text{otherwise};
\end{aligned}\right.\label{eqn-def-theta_i}\\[1mm]
\tilde\theta_i&=&\frac{3}{2}(r_i+g-2r_{n-1}).\label{eqn-def-t-theta_i}
\end{eqnarray}
\end{proposition}
\begin{proof}
Let
$$\mu_i'=\max\{2\mu_i,\,\mu_{\tilde l}\},\qquad \forall~1\leq i\leq n.$$
By assumption, one has
$$\mu_{n}'=\mu_{\tilde l},\quad \theta_{n-1}=3r_{n-1}-3,\quad \tilde\theta_i=\frac{3}{2}(r_i+g-2)-\theta_{n-1}.$$
According to \cite[Proposition\,2.4 \& Lemma\,2.5]{lu-zuo-15-0} and \autoref{lemma-second-2} below with
the decreasing sequence
$$\big\{2\mu_1,\cdots,\,\cdots,\,2\mu_{n-1},\,\mu_{\tilde l},\,\cdots,\,\mu_{n-1}\big\},$$
and the increasing sequence
$$\big\{\theta_1,\,\cdots,\,\theta_{n-1},\, \theta_{n-1}+\tilde\theta_{\tilde l},\,\cdots,\,\theta_{n-1}+\tilde\theta_{n-1}\big\},$$
we obtain (we set $\theta_0=0$)
\begin{eqnarray*}
\omega_f^2+\chi_f &\geq& \sum_{i=1}^{n-1}\theta_i\big(\mu_{i}'-\mu_{i+1}'\big)+\sum_{i=\tilde l}^{n-1}\big(\theta_{n-1}+\tilde\theta_i\big)\big(\mu_{i}-\mu_{i+1}\big)\\
&=&\sum_{i=1}^{n-1}(\theta_{i}-\theta_{i-1})\mu_{i}'-\theta_{n-1}\cdot \mu_n'+\sum_{i=\tilde l}^{n-1}\big(\theta_{n-1}+\tilde\theta_i\big)\big(\mu_{i}-\mu_{i+1}\big)\\
&\geq&\sum_{i=1}^{n-1}(\theta_{i}-\theta_{i-1})\cdot 2\mu_{i}-\theta_{n-1}\cdot \mu_{\tilde l}+\sum_{i=\tilde l}^{n-1}\big(\theta_{n-1}+\tilde\theta_i\big)\big(\mu_{i}-\mu_{i+1}\big)\\
&=&\sum_{i=1}^{n-1}2\theta_i\big(\mu_{i}-\mu_{i+1}\big)+\sum_{i=\tilde l}^{n-1}\tilde\theta_i\big(\mu_{i}-\mu_{i+1}\big).
\end{eqnarray*}
Hence \eqref{eqn-second-mult} follows from the above inequality together with \eqref{eqn-degree-chi_f}.
\end{proof}

\begin{lemma}\label{lemma-second-2}
If $\iota_i$ is birational,
then there exists a subsheaf $\mathcal F_{i}\subseteq f_*\big(\omega_f^{\otimes 2}\big)$
such that
\begin{equation}\label{eqn-2-2}
\mu_f(\mathcal F_{i})\geq \mu_i+\mu_n,
\qquad \rank\mathcal F_{i}\, \geq  g+d_i+r_i-1-h^0\big(F,\,M(\mathcal E_i)|_F\big),
\end{equation}
where $M(\mathcal E_i)$ is defined in \autoref{defofN(F)}.
In particular,  if $\iota_i$ is birational and $r_i\geq \frac{g}{3}+2$, then
there exists a subsheaf $\mathcal F_{i}\subseteq f_*\big(\omega_f^{\otimes 2}\big)$
such that
\begin{equation}\label{eqn-2-3}
\mu_f(\mathcal F_{i})\geq \mu_i+\mu_n,
\qquad \rank\mathcal F_{i}\, \geq  \frac{3}{2}(r_i+g-2).
\end{equation}
\end{lemma}
\begin{proof}
Let $\mathcal E_i\subseteq \mathcal E= f_*\omega_f$ be any subsheaf in the H-N filtration of $f_*\omega_f$ in \eqref{eqnharder-nara}.
Consider the composition map
$$\varrho_{i}:\,\mathcal E_i \otimes \mathcal E \lra S^2\big(f_*\omega_f\big) \lra f_*\big(\omega_f^{\otimes 2}\big).$$
It is clear that
$\mu_f \big(\im\,(\varrho_{i})\big)\geq \mu_f\big(\mathcal E_i\big)+\mu_f(\mathcal E)\geq \mu_i$.
To prove \eqref{eqn-2-2}, it suffices to show that
\begin{equation}\label{eqn-pf-pre-large-1}
\rank \big(\im\,(\varrho_{i})\big)\geq g+d_i+r_i-1-h^0\big(F,\,M(\mathcal E_i)|_F\big).
\end{equation}
Similar to \cite[Lemma\,2.5]{lu-zuo-15-0},
\eqref{eqn-pf-pre-large-1} follows from the next lemma since $\iota_i$ is birational.
Hence \eqref{eqn-2-2} is proved.
And \eqref{eqn-2-3} follows from \eqref{eqn-2-2} together with Castelnuovo's bound \eqref{eqn-castelnuovo}.
The proof is complete.
\end{proof}

\begin{lemma}
Let $D\in \Pic(Z)$ be an effective divisor of a smooth curve $Z$ of genus $g$,
$V\subseteq H^0(Z,D)$ be a subspace with $\dim V=r$, and
$$\rho:~V \otimes H^0(Z,\,K_Z) \lra H^0(Z,\,K_Z+D)$$
be the natural multiplication map, where is $K_Z$ is the canonical divisor of $Z$.
Assume that $D\subseteq K_Z$ and $V$ induces a birational map $\phi_V$ on $Z$.
Then
\begin{equation}\label{eqn-pre-1}
\dim \big(\im(\rho)\big)\geq g+\deg D+r-1-h^0(Z,\,D).
\end{equation}
\end{lemma}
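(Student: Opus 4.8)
The plan is to estimate from below the dimension of $W:=\im(\rho)=\im\big(S^2V\to H^0(Z,2D)\big)$, the span of the products $vw$ with $v,w\in V$, by slicing with a general divisor of $|V|$ and then feeding the resulting configuration of points into the classical general position / Castelnuovo theory.

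First I would fix a general section $s\in V$. Since $V$ is base-point-free, its zero divisor $E=(s)_0$ is reduced and consists of $n:=\deg D$ distinct points; and because $\phi_V$ is birational onto its image $C:=\phi_V(Z)\subseteq\bbp^{r-1}$, the map $\phi_V$ carries $E$ isomorphically onto a general hyperplane section of the integral nondegenerate curve $C$. By the General Position Theorem these $n$ points lie in (linearly) general position in the hyperplane $\bbp^{r-2}$ they span. I would then restrict along $E$. As $E\sim D$, the divisor $2D-E\sim D$, so the restriction sequence reads
$$0\lra H^0(Z,D)\xrightarrow{\ \cdot s\ } H^0(Z,2D)\xrightarrow{\ \mathrm{res}_E\ } H^0\big(E,\calo_E(2D)\big)\cong \mathbb C^{\,n},$$
which splits the count as $\dim W=\dim\big(W\cap sH^0(D)\big)+\dim \mathrm{res}_E(W)$.

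Next I would bound the two summands. For the first, $sV\subseteq W\cap sH^0(D)$ and multiplication by $s$ is injective on sections, so $\dim\big(W\cap sH^0(D)\big)\geq r$. For the second, $\mathrm{res}_E$ is a ring homomorphism, so $\mathrm{res}_E(vw)=\mathrm{res}_E(v)\cdot\mathrm{res}_E(w)$ and hence $\mathrm{res}_E(W)$ is exactly the image of $S^2\ol V$ in $\mathbb C^{n}$, where $\ol V:=\mathrm{res}_E(V)$ has dimension $r-1$ (the kernel of $V\to\ol V$ being the line $\mathbb C s$). Identifying $\mathbb C^{n}$ with functions on the $n$ points and $\ol V$ with the restriction of the linear forms on $\bbp^{r-2}$, the space $\mathrm{res}_E(W)$ is the restriction of all quadrics to our general-position points. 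Castelnuovo's lemma then gives that they impose at least $\min\{n,\,2(r-2)+1\}=\min\{n,\,2r-3\}$ independent conditions on quadrics, i.e. $\dim\mathrm{res}_E(W)\geq\min\{n,2r-3\}$. Combining the two estimates yields $\dim W\geq r+\min\{n,2r-3\}=\min\{r+n,\,3(r-1)\}$.

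Finally I would convert $\deg D=n$ into the genus-dependent bound by distinguishing two cases. If $D$ is nonspecial, Riemann--Roch gives $r\le h^0(D)=n-g_0+1$, hence $n\geq r+g_0-1$; if $D$ is special, Clifford's theorem gives $r\le h^0(D)\le \tfrac n2+1$, hence $n\geq 2r-2\geq 2r-3$. In either case $n\geq\min\{2r-3,\,r+g_0-1\}$, so $\min\{r+n,\,3(r-1)\}\geq\min\{3(r-1),\,2r+g_0-1\}$, which is precisely \eqref{eqn-pre-1}. I expect the main obstacle to be the clean invocation of the two classical inputs: guaranteeing that a general member of $|V|$ cuts out $n$ distinct points in linearly general position (this is exactly where both base-point-freeness and birationality are needed, through the General Position Theorem), and citing Castelnuovo's lemma in the sharp form $\min\{N,2m+1\}$. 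The rest is elementary bookkeeping, the only subtlety being the special/nonspecial split that lets Clifford or Riemann--Roch supply the required lower bound on $\deg D$.
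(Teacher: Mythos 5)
Your proof is correct, but it takes a genuinely different route from the paper's. The paper also starts from the General Position Theorem applied to a general hyperplane section of $\phi_V(Z)$, but then chooses a ``dual'' basis $v_1,\dots,v_r$ adapted to $r$ of those points, applies the base-point-free pencil trick to $\langle v_1,v_2\rangle\otimes V\to H^0(Z,2D)$ to get a subspace of dimension $2r-h^0(Z,p_3+\cdots+p_r)$, and then adds the $r-2$ squares $v_3^2,\dots,v_r^2$, which are shown to be independent of that subspace by evaluation at the points; the dichotomy $r\lessgtr g_0+1$ enters through the computation of $h^0(Z,p_3+\cdots+p_r)$ for points in general position. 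You instead slice by a single general $s\in V$, split $\dim\im(\rho)$ as $\dim(\im(\rho)\cap sH^0(D))+\dim\mathrm{res}_E(\im(\rho))\geq r+\dim\mathrm{res}_E(\im(\rho))$, identify the second term with the rank of the restriction of quadrics of $\mathbb P^{r-2}$ to the $n=\deg D$ general-position points of $E$, and invoke Castelnuovo's quadric lemma to get $\min\{n,2r-3\}$; the genus then enters only at the end through the special/nonspecial dichotomy (Clifford vs.\ Riemann--Roch) converting $\deg D$ into $g_0$. Your decomposition is essentially the opening move of the proof of Castelnuovo's genus bound, and it buys a slightly more refined intermediate statement, namely $\dim\im(\rho)\geq r+\min\{\deg D,\,2r-3\}$, which keeps track of $\deg D$ rather than only $g_0$; the paper's pencil-trick argument is more self-contained in that it needs only $h^0$ of a general effective divisor rather than the quadric lemma. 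The one point worth making explicit in your write-up is that $\mathrm{res}_E(\im(\rho))$ coincides with the image of \emph{all} quadrics restricted to $E$ because $\bar V=V/\mathbb{C}s$ is the full space of linear forms on the hyperplane $\{s=0\}\cong\mathbb P^{r-2}$, and that the General Position Theorem (hence the implicit hypothesis $r\geq 3$, which the paper also assumes tacitly) is what guarantees both the reducedness of $E$ and the applicability of Castelnuovo's lemma; with those remarks in place the argument is complete.
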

\begin{proof}
Since $\phi_V$ is birational, the complete linear system $|D|$ automatically defines a birational map $\phi_{D}$, and one has the following commutative diagram \big($s=h^0(Z,\,D)$\big).
$$\xymatrix{Z \ar[rr]^{\phi_{D}}\ar[rd]_-{\phi_{V}} &&\mathbb P^{s-1} \ar@{-->}[ld]\\
&\mathbb P^{r-1}&
}$$
According to the general position theorem (cf. \cite[\S\,III.1]{acgh-85}),
there exist $s$ points $\{p_1,\cdots,p_{s}\}\subseteq Z$ such that any $s-1$ of them
give linearly independent conditions for the vector space $H^0(Z,\,D) \,\big(\supseteq V\big)$.
Hence there exist
$\{v_1,\cdots v_{r}\}\subseteq V$ such that
$$v_j(p_j)\neq 0,\quad\text{but}\quad v_j(p_i)=0,~\forall\, 1\leq i \leq r \text{~and~} i\neq j.$$
Let $V_{12}\subseteq V$ be generated by $v_1$ and $v_2$.
Consider the subspace
\begin{equation}\label{eqn-2-1}
W\triangleq\langle v_3^2,\cdots, v_{r}^2 \rangle \subseteq H^0(Z,\,2D) \hookrightarrow H^0(Z,\,K_Z+D),
\end{equation}
and the restriction map
$$\varphi:~V_{12}\,\otimes H^0(Z,\,K_Z) \lra H^0(Z,\,K_Z+D).$$
According to the base-point-free pencil trick (cf. \cite[\S\,III.3]{acgh-85}), one checks easily that
$$\begin{aligned}
\dim \im(\varphi) &= 2g-h^0\big(Z,\,K_Z-(D-p_3-\cdots-p_{r})\big)\\
&=2g-\Big(h^0\big(Z,\,(D-p_3-\cdots-p_{r})\big)+r+g-3-\deg D\Big)\\
&=g+1+\deg D-h^0(Z,\,D).
\end{aligned}$$
The last step follows from the fact that
$$h^0\big(Z,\,(D-p_3-\cdots-p_{r})\big)=h^0(Z,\,D)-(r-2),$$
since $\{p_3,\cdots,p_r\}$ are in general position.
Note that $\dim W=r-2$,
and if we view $W$ as subspace of $H^0(Z,\,K_Z+D)$ as in \eqref{eqn-2-1}, then $W \cap \im(\varphi)=0$.
Therefore, \eqref{eqn-pre-1} follows immediately.
\end{proof}


\subsection{Proof of \autoref{thm-3-1}} \label{sec-pf-thm-3-1}
We follow the notations introduced in the last subsection.
According to \cite[Lemma\,2.2]{lu-zuo-15-0} together with the assumption, we have
\begin{equation}\label{eqn-low-gamma_i}
\gamma_i\geq g/4, \qquad \text{~if~} \deg(\iota_i)=2.
\end{equation}
If $\iota_{n-1}$ is not birational, neither is $\iota_i$ for $1\leq i\leq n-1$ by \eqref{eqn-factor-ij}.
Hence by \eqref{eqn-d_i-non-bi} and \eqref{eqn-low-gamma_i}, one has
$$d_i\geq \min\Big\{3(r_i-1),~2(r_i-1)+\frac{g}{2}\Big\},\qquad \forall~1\leq i\leq n-1.$$
In particular, taking $i=n-1$ one obtains $\delta \geq (g-1)/3$.
Hence \eqref{eqn-3-thm} follows the above inequalities together with
\eqref{eqn-degree-chi_f} and \eqref{eqn-xiao-1}.
Thus we may assume $\iota_{n-1}$ is birational in following. Let
$$\left\{\begin{aligned}
&x=\frac{2g-7\delta}{4g-11\delta}, &\quad&\lambda_0=\frac{16-5x}{3}=\frac{18g-47\delta}{4g-11\delta},&\qquad&\text{if~}\delta\leq \frac{2g}{21};\\[1mm]
&x=\frac{8g-14\delta}{16g-13\delta}, &~&\lambda_0=\frac{16-5x}{3}=\frac{72g-46\delta}{16g-13\delta},&&\text{if~}\delta \geq \frac{2g}{21}.
\end{aligned}\right.$$
According to \eqref{eqn-xiao-1} together with \eqref{eqn-second-mult}, one obtains
\begin{equation}\label{eqn-3-1}
\begin{aligned}
\omega_f^2 &~\geq~ \sum_{i=1}^{\tilde l-1}\big(x(2\theta_i-r_i)+(1-x)(d_i+d_{i+1})\big)\big(\mu_{i}-\mu_{i+1}\big)\\
&\quad+\sum_{i=\tilde l}^{n-1}\big(x(2\theta_i-r_i+\tilde\theta_i)+(1-x)(d_i+d_{i+1})\big)\big(\mu_{i}-\mu_{i+1}\big).
\end{aligned}
\end{equation}
We claim that
\begin{eqnarray}
x(2\theta_i-r_i)+(1-x)(d_i+d_{i+1}) &\geq& \lambda_0\cdot r_i-4,\label{eqn-3-2}\\
x(2\theta_i-r_i+\tilde\theta_i)+(1-x)(d_i+d_{i+1}) &\geq& \lambda_0\cdot r_i-4.\label{eqn-3-3}
\end{eqnarray}
Assume the above claim. Then \eqref{eqn-3-thm} follows directly from
\eqref{eqn-3-1} and \eqref{eqn-konno}.
Hence it suffices to prove \eqref{eqn-3-2} and \eqref{eqn-3-3}.

Consider first the case when $1\leq i \leq \tilde l-1$, and we divide the proof of \eqref{eqn-3-2} into several subcases
(keep \eqref{eqn-factor-ij} in mind).
\begin{list}{}
{\setlength{\labelwidth}{0.2cm}
\setlength{\leftmargin}{0.3cm}}
\item[$\bullet$] $\deg(\iota_i)\geq 4$. In this case, one can show \eqref{eqn-3-2} easily by using
\eqref{eqn-d_i-non-bi} and the definition of $\theta_i$ in \eqref{eqn-def-theta_i}.

\item[$\bullet$] $\deg(\iota_i)=3$.
According to \eqref{eqn-d_i-non-bi} and \eqref{eqn-def-theta_i},
one obtains $d_{i+1}\geq d_i\geq 3(r_i-1)$ and $\theta_i\geq 2r_i-1$.
Hence
\begin{eqnarray*}
&&x(2\theta_i-r_i)+(1-x)(d_i+d_{i+1})\\
&\geq& x(3r_i-2)+(1-x)(6r_i-6)\\
&=&(6-3x)r_i-(6-4x) \geq \lambda_0\cdot r_i-4, \qquad \text{if~}r_i\geq 3.
\end{eqnarray*}
If $\deg(\iota_{i+1})=3$, then $d_{i+1}\geq 3(r_{i+1}-1)\geq 3r_i$ by \eqref{eqn-d_i-non-bi},
from which \eqref{eqn-3-2} follows immediately.
If $\deg(\iota_{i+1})=1$, we have better bound for $d_{i+1}$ by \eqref{eqn-d_i-bi},
from which one can also show \eqref{eqn-3-2} when $r_{i}\leq 2$.

\item[$\bullet$] $\deg(\iota_i)=2$. We have two possibilities to deal with.
If $\gamma_i\geq r_i-1$, then
$$\theta_i=3r_i-3,\qquad d_{i+1}\geq d_i\geq 4(r_i-1),$$
from which one can show \eqref{eqn-3-2} easily.
If $\gamma_i\leq r_i-2$, then
$$\theta_i=2r_i+\gamma_i-1, \quad\text{and}\quad  d_{i+1}\geq d_i\geq 2(r_i+\gamma_i-1).$$
Note that $d_i\leq 2g-2\leq 8\gamma_i-2$, from which it follows that $\gamma_i\geq r_i/3$.
Hence
\begin{eqnarray*}
&&x(2\theta_i-r_i)+(1-x)(d_i+d_{i+1}) \\
&\geq& x(3r_i+2\gamma_i-2)+(1-x)(4r_i+4\gamma_i-4)\\
&=&(4-x)r_i+(4-2x)\gamma_i-(4-2x)\geq\lambda_0\cdot r_i-4.
\end{eqnarray*}

\item[$\bullet$] $\deg(\iota_i)=1$. In this case, the maps $\iota_i$ and $\iota_{i+1}$ are both birational.
Hence $\theta_i=3r_i-3$.
According to \eqref{eqn-d_i-bi}, one obtains
\begin{equation}\label{eqn-3-8}
d_i+d_{i+1}\geq\left\{\begin{aligned}
&3(r_i+r_{i+1})-10,&\quad&\text{if~}r_{i+1}<\frac{g+6}{3};\\
&3r_i-5+\frac{g}{2}+\frac{3r_{i+1}}{2}-2,&&\text{if~}r_{i+1}\geq  \frac{g+6}{3}\text{~and~}r_{i}< \frac{g+6}{3};\\
&g+\frac{3(r_i+r_{i+1})}{2}-4,&&\text{if~}r_{i}\geq \frac{g+6}{3}.
\end{aligned}\right.
\end{equation}
We only show \eqref{eqn-3-2} in the last possibility, and leave the proof of \eqref{eqn-3-2}
in the first two possibilities to the readers.
By \eqref{eqn-3-8}, one has $d_i+d_{i+1}\geq g+3r_i-2$ in this case.
By the definition of $\tilde l$ in \eqref{eqn-def-tilde-l},
one has $r_i+g\leq 2r_{n-1}-1=2(g-\delta)-1$, i.e., $g\geq r_i+2\delta+1$.
Note also that
$$2(1-x)\delta\geq (\lambda_0-x-4)(g-2\delta)\geq (\lambda_0-x-4)(r_i+1).$$
Thus
\begin{eqnarray*}
&&x(2\theta_i-r_i)+(1-x)(d_i+d_{i+1})\\
&\geq& x(5r_i-6)+(1-x)(4r_i+2\delta-1)\\
&=&(4+x)r_i+2(1-x)\delta-(1+5x)\\
&\geq &\lambda_0\cdot r_i-(5+6x-\lambda_0)>\lambda_0\cdot r_i-4.
\end{eqnarray*}
Therefore, \eqref{eqn-3-2} is proved.
\end{list}

Now consider the case when $\tilde l\leq i \leq n-1$. By the definition of $\tilde l$ in \eqref{eqn-def-tilde-l},
we have $\iota_i$ is birational, $r_i\geq \frac{g}{3}+2$ and $r_i\geq 2r_{n-1}-g=g-2\delta$.
Hence $\theta_i=3r_i-3$, and $d_i+d_{i+1}\geq g+3r_i-2$ by \eqref{eqn-d_i-bi}.
By definition, one checks easily that
$$3x\delta+\frac{2-5x}{2} g\geq \frac{14-31x}{6} r_i,\quad \forall~g-\delta \geq r_i\geq g-2\delta.$$
Thus
\begin{eqnarray*}
&&x(2\theta_i-r_i+\tilde\theta_i)+(1-x)(d_i+d_{i+1})\\
&\geq& x\left(5r_i-6+\frac32(r_i-g+2\delta)\right)+(1-x)(g+3r_i-2)\\
&=&\Big(3+\frac72x\Big)r_i+\left(3x\delta+\frac{2-5x}{2} g\right)-(2+4x)\\
&\geq &\lambda_0\cdot r_i-4.
\end{eqnarray*}
Therefore, \eqref{eqn-3-3} is proved. The proof is complete.
\qed

\subsection{Proof of \autoref{thm-3-2}}\label{sec-pf-thm-3-2}
Since $\delta<g$,
it follows that $g\geq 4$ by our assumption.
We divide the proof into two cases according to the relation between $\delta$ and $g$.

{\vspace{1mm}\sc\noindent Case 1: $\delta\geq \frac{3g-1}{5}$}.
Let
$$i_0=\min\Big\{i~\big|~r_i>\frac{r_{n-1}}{2}\Big\}=\min\Big\{i~\big|~r_i\geq \frac{g-\delta+1}{2}\Big\}.$$

If $i_0=1$, then $d_{1}\geq 3\left(\frac{r_{n-1}+1}{2}-1\right)$
by \autoref{lemma-d_i-2} and \eqref{eqn-low-gamma_i} since $\frac{r_{n-1}+1}{2}\leq \frac{g+2}{5}$.
Hence according to \eqref{eqn-xiao}, we get
$$\omega_f^2\geq (2g-2+d_1)\cdot\mu_1 \geq \frac{2g-2+d_1}{g-\delta}\cdot \chi_f> \frac{4(g-1)}{g-\delta/2}\cdot \chi_f.$$

If $i_0\geq 2$, then $r_{i_0-1}\leq \frac{r_{n-1}}{2}$, and $r_{i_0-1}\leq \frac{r_{n-1}-1}{2}$ when $r_{i_0}= \frac{r_{n-1}+1}{2}$.
Combining these with \autoref{lemma-d_i-2} and \eqref{eqn-low-gamma_i}, it is easy to show that
$$d_{i_0}\cdot (r_{n-1}-r_{i_0-1})\geq\left\{\begin{aligned}
&3,&\quad&\text{if~}g-\delta=2,\\
&\frac{3}{4}\big((g-\delta)^2-1\big),&&\text{if~}g-\delta\geq 3.
\end{aligned}\right.$$
Note that $g-\delta\geq 3$ implies $g\geq7$ by the assumption $\delta\geq \frac{3g-1}{5}$.
Therefore, according to \eqref{eqn-cor-xiao} we get
$$\begin{aligned}
\lambda_f &> \frac{(2g-2)^2}{(2g-2)\cdot r_{n-1}-d_{i_0}\cdot (r_{n-1}-r_{i_0-1})}\\
&\geq \left\{\begin{aligned}
&\frac{(2g-2)^2}{(2g-2)\cdot 2-3}\geq \frac{4(g-1)}{g-\delta/2}, &\quad&\text{if~}g-\delta=2;\\
&\frac{(2g-2)^2}{(2g-2)\cdot (g-\delta)-\frac{3}{4}\big((g-\delta)^2-1\big)}\geq \frac{4(g-1)}{g-\delta/2},&&\text{if~}g-\delta\geq 3.
\end{aligned}\right.
\end{aligned}$$

{\vspace{1mm}\sc\noindent Case 2: $\frac{3g-2}{5}\geq \delta \geq \frac{2(g+8)}{9}$}.
In this case, we have $g\geq 8$ since $\delta$ is an integer.

\begin{list}{}
{\setlength{\labelwidth}{0.2cm}
\setlength{\leftmargin}{0.3cm}}
\item[$\bullet$]
{\sc Subcase 2.1:} $\frac{3g-2}{5}\geq \delta \geq \frac{2g+2}{5}$.~
Let
$$i_1=\min\big\{i~\big|~d_i\geq g-1\big\}.$$
Then according to \eqref{eqn-xiao}, one has
\begin{equation}\label{eqn-3-6}
\begin{aligned}
\omega_f^2\geq~& \sum_{i=1}^{i_1-1}\big(d_{i}+d_{i+1}\big)\big(\mu_{i}
-\mu_{i+1}\big)+(2g-2+d_{i_1})\mu_{i_1}\\
=~& \sum_{i=1}^{i_1-1}\big(d_{i}+d_{i+1}\big)\big(\mu_{i}-\mu_{i+1}\big)
+\sum_{i=i_1}^{n-1}\big(2g-2+d_{i_1}\big)\big(\mu_{i}-\mu_{i+1}\big).
\end{aligned}
\end{equation}
We claim that
\begin{eqnarray}
d_{i}+d_{i+1} &\geq& \frac{2(g-1)}{g-\delta/2-1}\cdot (2r_i-1),\qquad \forall~1\leq i \leq i_1-1;\label{eqn-3-7-1}\\
2g-2+d_{i_1} &\geq& \frac{2(g-1)}{g-\delta/2-1}\cdot (2r_i-1), \qquad \forall~i_1\leq i \leq n-1.\label{eqn-3-7-2}
\end{eqnarray}
Assuming the above claim, one obtains from \eqref{eqn-3-6} together with \eqref{eqn-degree-chi_f} that
$$\omega_f^2\geq \frac{4(g-1)}{g-\delta/2-1}\cdot\chi_f-\frac{2(g-1)}{g-\delta/2-1}\mu_1.$$
Combining this with \eqref{eqn-konno}, we prove \eqref{eqn-thm-3-2} in this subcase.

It remains to show \eqref{eqn-3-7-1} and \eqref{eqn-3-7-2}.
Since $d_{i_1}\geq g-1$, \eqref{eqn-3-7-2} follows immediately since $r_i\leq r_{n-1}=g-\delta$.
Note also that $\frac{2(g-1)}{g-\delta/2-1}\leq 3$ by our assumption,
and $d_i\geq 3(r_i-1)$ for $1\leq i\leq i_1-1$ by \autoref{lem-2-1}.
Hence \eqref{eqn-3-7-1} follows for $i\leq i_1-2$.
When $i=i_1-1$, by \autoref{rem-2-2}, we have
either $d_{i_1-1}+d_{i_1}\geq 3(2r_{i_1-1}-1)$, or $d_{i_1-1}+d_{i_1}=6r_{i_1-1}-4$ and $r_{i_1-1}\in\big\{g/3,\,(g+1)/3\big\}$.
Since $g\geq 8$, one can also verify \eqref{eqn-3-7-1} for $i=i_1-1$,
except when $g=9$, $\delta=5$, $d_{i_1}=8$, $d_{i_1-1}=6$ and $r_{i_1-1}=3$.
For the exceptional case, we replace $i_1$ by $i_1-1$ in \eqref{eqn-3-6}. Then one can show easily that
both \eqref{eqn-3-7-1} and \eqref{eqn-3-7-2} hold, and hence proves \eqref{eqn-thm-3-2}.

\item[$\bullet$]
{\sc Subcase 2.2:} $\frac{2g+1}{5}\geq \delta \geq \frac{2(g+9)}{9}$,
or $\delta=\frac{2g+17}{9}$ or $\frac{2g+16}{9}$ and $g\leq 52$.~
Let
$$\begin{aligned}
x&\,=\frac{2(g-1)}{g-\delta/2-t}~\text{~with~}t=\frac{17}{18},\\[1mm]
i_1&\,=\min\left\{i~\big|~d_i\geq g-1\right\},\\[1mm]
i_2&\,=\min\left\{i~\big|~d_i\geq x\big(g-3\delta/2-(1-t)\big)\right\}.
\end{aligned}$$
Note that $9/4<x<3$ and $i_1\leq i_2$ by our assumption.

If $i_1=i_2$, then we can show similarly as the above subcase that
\begin{eqnarray*}
d_{i}+d_{i+1} &\geq& x (2r_i-1),\qquad \forall~1\leq i \leq i_1-1;\\
2g-2+d_{i_1} &\geq& x (2r_i-1), \qquad \forall~i_1\leq i \leq n-1.
\end{eqnarray*}
Hence \eqref{eqn-thm-3-2} follows from \eqref{eqn-3-6} together with \eqref{eqn-konno}.

In the rest part of the proof, we assume that $i_1<i_2$.
Before going further, we first claim that
\begin{claim}\label{claim-2-1}
{\rm (1).} If $d_i<x(g-3\delta/2)$, then $d_i\geq x(r_i-1)$.

{\rm (2).}
If $d_i<x(g-3\delta/2)-\frac12-\frac{5(2g+1-5\delta)}{8(2g-\delta-t)}$, then $r_i<g-(3\delta-1)/2$.
\end{claim}
\begin{proof}[Proof of \autoref{claim-2-1}]
(1).
Let $\iota_i$ be defined as in \eqref{eqn-def-iota_i}.
Since $x\leq 3$ by assumption,
the claim follows immediately if $\deg(\iota_i)\geq 3$ by \eqref{eqn-d_i-non-bi}.
When $\deg(\iota_i)\leq 2$, we prove the claim by contradiction.
Assume that
\begin{equation}\label{eqn-2-4}
d_i< x (r_i-1).
\end{equation}

Consider first the case when $\deg(\iota_i)=2$.
By \eqref{eqn-d_i-non-bi} together with \eqref{eqn-2-4},
we may assume that $r_i-1>\gamma_i$, and hence $d_i\geq 2(r_i-1)+2\gamma_i\geq 2(r_i-1)+\frac{g}{2}$.
Combining this with \eqref{eqn-2-4}, we get
$$\frac{g}{2}<(x-2)(r_i-1)\leq (x-2)(g-\delta-1)<\frac{(g-\delta-1)\delta}{g-\delta/2-t},
\quad\text{which is a contradiction}.$$

We now consider the case when $\deg(\iota_i)=1$, i.e., $\iota_i$ is birational.
Hence $r_i\geq 3$. Moreover, if $r_i=3$, then
$8\leq g\leq \frac{(d_i-1)(d_i-2)}{2}$, which implies that
$d_i\geq 6> x(r_i-1)$.
Hence we may assume that $r_i\geq 4$ in the following.
According to Castelnuovo's bound \eqref{eqn-castelnuovo},
one has $d_i\geq 4r_i-7\geq 3(r_i-1)\geq x(r_i-1)$ if $m_i\geq 4$.
It remains to consider the cases when $m_i=3$ or $2$.

When $m_i=3$, one has $d_i-1\geq 3(r_i-2)$, i.e., $d_i\geq 3r_i-5$.
Since $x< 3$ by assumption,
it suffices to consider the cases when $d_i=3r_i-5$ or $3r_i-4$.
By Castelnuovo's bound \eqref{eqn-castelnuovo}, we have
\begin{equation}\label{eqn-3-9}
d_i\geq \frac{g}{3}+2r_i-3.
\end{equation}
\mbox{\,}~If $d_i=3r_i-5$, then $r_i-1\geq \frac{g}{3}+1$ by \eqref{eqn-3-9},
and $2>(3-x)(r_i-1)$ by \eqref{eqn-2-4}.
Hence $$\delta>\frac{2g-6}{3}+\frac{(2-2t)(g+1)}{(g-1)}>\frac{2g-6}{3},\quad\text{which contradicts the assumption.}$$
\mbox{\,}~If $d_i=3r_i-4$, then $r_i-1\geq \frac{g}{3}$ by \eqref{eqn-3-9},
and $1>(3-x)(r_i-1)$ by \eqref{eqn-2-4}.
Hence $$\delta>\frac{2g-6}{3}+2(1-t)>\frac{2g-6}{3},\quad\text{which is still a contradiction.}$$

When $m_i=2$, one has $d_i\geq \frac{g-1}{2}+\frac{3(r_i-1)}{2}$
by Castelnuovo's bound \eqref{eqn-castelnuovo}.
Combining this with \eqref{eqn-2-4} and the assumption $d_i<x(g-3\delta/2)$ respectively, we obtain
\begin{equation*}
\left\{\begin{aligned}
r_i-1&\,>\frac{(g-1)(2g-\delta-2t)}{2g+3\delta+6t-8};\\
r_i-1&\,<\frac{(g-1)(6g-11\delta+2t)}{3(2g-\delta-6t)}.
\end{aligned}\right.
\end{equation*}
Hence
$$\begin{aligned}
&\quad \frac{(g-1)(2g-\delta-2t)}{2g+3\delta+6t-8}<\frac{(g-1)(6g-11\delta+2t)}{3(2g-\delta-6t)},
~\,\Longrightarrow~\\[1mm]
&\,0\,<\,\delta(2g+5-9\delta)+\frac{26g-34}{9}\\[1mm]
& \left\{\begin{aligned}
\leq\,&\frac{2g+18}{9}\cdot(2g+5-2g-18)+\frac{26g-34}{9}<0,
\quad\text{~if~}\delta\geq \frac{2g+18}{9};\\[1mm]
=\,&\frac{(36-2\ell)g-\ell(\ell-5)-34}{9}<0,
\quad\text{~if~}\delta=\frac{2g+\ell}{9}\text{~with~}16\leq \ell<18\text{~and~}g\leq 52.
\end{aligned}\right.
\end{aligned}
$$
The above contradiction completes the proof.

(2).
By (1), one has $r_i-1<g-3\delta/2$.
Hence it suffice to derive a contradiction if $r_i=g-(3\delta-1)/2$.
The proof is similar as above.
In fact, one can easily prove a contradiction except the case
when $\deg(\iota_i)=1$ and $m_i=2$.
In the exceptional case,
$\delta\geq \frac{2g+17}{9}$ since $\delta$ is odd,
and by Castelnuovo's bound \eqref{eqn-castelnuovo} we obtain
$$x\Big(g-\frac{3\delta}{2}\Big)-\frac12-\frac{5(2g+1-5\delta)}{8(2g-\delta-t)}>d_i
\geq \frac{g-1}{2}+\frac{3(r_i-1)}{2}=2g-\frac{9\delta+5}{4}.$$
Hence
$$0>\delta(18\delta-4g-33)-\frac29g+\frac{49}{4}.$$
This is a contradiction since $\delta\geq \frac{2g+17}{9}$.
\end{proof}

We now come back to the proof of \eqref{eqn-thm-3-2}.
By \autoref{lem-2-1} and \autoref{rem-2-2}, one has
\begin{equation}\label{eqn-3-11}
\left\{\begin{aligned}
d_{i}\,+\,d_{i+1}~&\,\geq 6r_{i}-3\geq 2xr_i-(2x-3),&\quad&\text{if~}i<i_1-1;\\
d_{i_1-1}+d_{i_1}&\,\geq 6r_{i_1-1}-3\geq 2xr_{i_1-1}-(2x-3),&&\text{if~}d_{i_1-1}<g-3.
\end{aligned}\right.
\end{equation}
By \autoref{claim-2-1}, we have
\begin{equation}\label{eqn-3-12}
\left\{\begin{aligned}
d_{i}\,+\,d_{i+1}~&\,\geq 2xr_{i}-x,&\quad&\text{if~}i<i_2-1;\\
d_{i_2-1}+d_{i_2}&\,\geq 2xr_{i_2-1}-x,&&\text{if~}d_{i_2-1}<\Delta.
\end{aligned}\right.
\end{equation}
Here $\Delta\triangleq x(g-3\delta/2)-\frac12-\frac{5(2g+1-5\delta)}{8(2g-\delta-t)}$.
If $d_{i_2-1}\geq \Delta$, then
$r_{i_2-1}=g-(3\delta-1)/2$ by \autoref{claim-2-1}\,(1),
and hence
\begin{equation}\label{eqn-3-13}
d_{i_2-1}+d_{i_2}\geq 2d_{i_2-1}+1\geq 2xr_{i_2-1}-x-\frac{5(2g+1-5\delta)}{4(2g-\delta-t)}.
\end{equation}
Note also that $2g-2+d_{i_2}\geq x\big(2(g-\delta)-1\big)$.
Hence by \eqref{eqn-xiao} and \eqref{eqn-degree-chi_f}, one has
\begin{equation*}
\begin{aligned}
\omega_f^2\geq& \sum_{i=1}^{i_2-1}\big(d_{i}+d_{i+1}\big)\big(\mu_{i}
-\mu_{i+1}\big)+(2g-2+d_{i_2})\mu_{i_2}\\
\geq&\left\{\begin{aligned}
&2x\chi_f-(2x-3)\mu_1-(3-x)\mu_{i_1},&&\text{if $d_{i_1-1}<g-3$ and $d_{i_2-1}<\Delta$};\\[0.5mm]
&2x\chi_f-(2x-3)\mu_1-(3-x)\mu_{i_1-1},&&\text{if $d_{i_1-1}\geq g-3$ and $d_{i_2-1}<\Delta$};\\[0.5mm]
&2x\chi_f-(2x-3)\mu_1-(3-x)\mu_{i_1}-\xi\mu_{i_2-1},&&\text{if $d_{i_1-1}<g-3$ and $d_{i_2-1}\geq \Delta$};\\[0.5mm]
&2x\chi_f-(2x-3)\mu_1-(3-x)\mu_{i_1-1}-\xi\mu_{i_2-1},&&\text{if $d_{i_1-1}\geq g-3$ and $d_{i_2-1}\geq \Delta$}.
\end{aligned}
\right.
\end{aligned}
\end{equation*}
Here $\xi=\frac{5(2g+1-5\delta)}{4(2g-\delta-t)}.$
By \eqref{eqn-xiao}, we also have
$$\omega_f^2\geq (d_1+d_i)(\mu_1-\mu_i)+(2g-2+d_i)\mu_i\geq d_i\mu_1+(2g-2)\mu_i,
\quad\forall~1\leq i\leq n-1.$$
Hence
\begin{equation*}
\lambda_f\geq \Lambda\triangleq\left\{\begin{aligned}
 &\frac{4(g-1)x}{2g-2+2x-3+(3-x)\big(1-\frac12\big)},
&&\text{if $d_{i_1-1}<g-3$ and $d_{i_2-1}<\Delta$};\\[0.5mm]
 &\frac{4(g-1)x}{2g-2+2x-3+(3-x)\big(1-\frac{g-3}{2g-2}\big)},
&&\text{if $d_{i_1-1}\geq g-3$ and $d_{i_2-1}<\Delta$};\\[0.5mm]
 &\frac{4(g-1)x}{2g+2x-5+\frac{3-x}{2}+\frac{(2g-2-\Delta)\xi}{2g-2}},
&&\text{if $d_{i_1-1}<g-3$ and $d_{i_2-1}\geq \Delta$};\\[0.5mm]
 &\frac{4(g-1)x}{2g+2x-5+\frac{(g+1)(3-x)}{2g-2}+\frac{(2g-2-\Delta)\xi}{2g-2}},
&&\text{if $d_{i_1-1}\geq g-3$ and $d_{i_2-1}\geq \Delta$}.
\end{aligned}
\right.
\end{equation*}
Note that $\Delta>g-1$.
Thus one shows that $\Lambda>\frac{4(g-1)}{g-\delta/2}$.
This proves \eqref{eqn-thm-3-2} in this subcase.

\item[$\bullet$]
{\sc Subcase 2.3:} $\delta=\frac{2g+17}{9}$ or $\frac{2g+16}{9}$ and $g>52$.~
In this subcase,
\eqref{eqn-thm-3-2} follows directly from \eqref{eqn-3-thm}.\qedhere
\end{list}
This completes the proof.
\qed

\section{Double cover fibrations}\label{sec-double}
In this section, we treat the double cover fibrations.
So we always assume in the section that $f:\,X\to B$ is a locally non-trivial double cover fibration of type $(g, \gamma)$ as in \autoref{def-double-cover}.
Since the case where $\gamma=0$ has been studied in \cite{xiao-91,lu-zuo-13} (see also \cite{cornalba-harris-88,lu-zuo-14} for the semi-stable case),
$\gamma$ is assumed to be positive in this section unless other explicit statements.

In \autoref{section-invarians-double-cover}, we prove the formulas for the invariants of the double cover fibrations.
In \autoref{sec-irr-double}, we consider the irregular double cover fibrations.
In \autoref{sec-slope-double}, we study the slope problems.
Finally, we prove \autoref{prop-2-21} (resp. \autoref{thm-3-4}) in \autoref{sec-pf-prop-2-21} (resp. \autoref{sec-pf-thm-3-4}).

\subsection{Invariants of double cover fibrations}\label{section-invarians-double-cover}
In this subsection, we first define the local invariants of the induced double cover,
and then show in \autoref{thminvariants-double-fibration} that the relative invariants
of $f$ can be expressed by these local invariants and relative invariants of the quotient fibration.

The degree-two morphism $\pi$ induces an involution $\sigma$ on $X$.
Let $\vartheta:\,\wt X \to X$ be the composition of all the blowing-ups of the isolated fixed points of $\sigma$, and $\tilde \sigma$ the induced involution on $\wt X$.
Then the quotient $\wt Y:=\wt X /\langle\tilde\sigma\rangle$ is a smooth surface with a natural fibration $\wt h:\,\wt Y \to B$ of genus $\gamma$, which may not be relatively minimal.
Let $h:\,Y \to B$ be its relatively minimal model.
\begin{figure}[H]
$$
\xymatrix{
X \ar[drr]_-{f} & \wt X \ar[l]_-{\vartheta}\ar[dr]^-{\tilde f}\ar[rr]^-{\wt\pi}&&\wt Y \ar[dl]_-{\tilde h}\ar[r]^-{\psi} &Y \ar[dll]^-{h}\\
&&B&&
}$$
\caption{Double cover fibration.}  \label{figure-5-1}
\end{figure}

The double cover $\tilde \pi$ induces a double cover $\pi_0:\,X_0 \to Y_0:=Y$, which is determined by the relation
$\mathcal O_{Y}(R) \equiv L^{\otimes 2}$ with $R=\psi(\wt R)$ and $\wt R$ being the branch locus of $\tilde\pi$.
According to Hurwitz formula, one has
\begin{equation}\label{eqnhurwitz-formula}
R\cdot \Gamma=2g+2-4\gamma\geq 0,\qquad ~\text{for any fiber~$\Gamma$ of $h$}.
\end{equation}
The surface $X_0$ is normal but not necessarily smooth.
Moreover, $\tilde\pi$ is in fact the canonical resolution of $\pi_0$ (cf. \cite[\S\,III.7]{bhpv-04}):
\begin{figure}[H]
$$\mbox{}
  \xymatrix{
\wt X\ar@{=}[r] & X_{t} \ar[r]^-{\phi_t}\ar[d]_-{\tilde \pi=\pi_t}&
 X_{t-1}\ar[r]^-{\phi_{t-1}}\ar[d]^-{\pi_{t-1}}&
\cdots \ar[r]^-{\phi_2} & X_1\ar[r]^-{\phi_1}\ar[d]_-{\pi_1}
& X_0 \ar[d]^-{\pi_0}\\
\wt Y\ar@{=}[r] & Y_{t} \ar[r]^-{\psi_t}& Y_{t-1}\ar[r]^-{\psi_{t-1}}&
\cdots \ar[r]^-{\psi_2} & Y_1\ar[r]^-{\psi_1} & Y_0 \ar@{=}[r] & Y}
$$
\caption{Canonical resolution.}  \label{figure-1}
\end{figure}

\noindent Here $\psi_i$'s are successive blowing-ups
resolving the singularities of $R$, and
$\pi_{i}:\,X_i \to Y_i$ is the double cover determined by
$\mathcal O_{Y_i}(R_i) \equiv L_i^{\otimes 2}$ with
$$R_i=\psi_i^*(R_{i-1})-2[m_{i-1}/2]\, \mathcal E_i,\qquad
L_i=\psi_i^*(L_{i-1})\otimes \mathcal O_{Y_i}\left(\mathcal E_i^{-[m_{i-1}/2]}\right),$$
where $\mathcal E_i$ is the exceptional divisor of $\psi_i$,
$m_{i-1}$ is the multiplicity of the singular point $y_{i-1}$ in $R_{i-1}$ (also called the multiplicity of the blowing-up $\psi_i$),
$[~]$ stands for the integral part,
$R_0=R$ and $L_0=L$.
A singularity $y_j \in R_{j}\subseteq Y_{j}$ is said to be {\it infinitely closed to}
$y_{i}\in R_{i}\subseteq Y_{i}$ ($j>i$), if $\psi_{i+1}\circ\cdots\circ\psi_j(y_j)=y_{i}\,.$

We remark that the order of these blowing-ups contained in $\psi$ is not unique.
If $y_{i-1}$ is a singular point of $R_{i-1}$ of odd multiplicity $2k+1$ ($k\geq 1$)
and there is a unique singular point $y$ of $R_i$
on the exceptional curve $\mathcal E_i$ of multiplicity $2k+2$,
then we always assume that $\psi_{i+1}: Y_{i+1} \to Y_{i}$ is a blowing-up at $y_i=y$.
We call such a pair $(y_{i-1},y_{i})$ a {\it singularity of $R$ of type $(2k+1 \to 2k+1)$},
and $y_{i-1}$ (resp. $y_i$) the first (resp. second) component.
\begin{definition}\label{definitionofs_i}
For any singular fiber $F$ of $f$ and $j\geq 2$, we define
\begin{list}{}
{\setlength{\labelwidth}{0.3cm}
\setlength{\leftmargin}{0.4cm}}
\item[$\bullet$] if $j$ is odd, $s_j(F)$ equals the number of $(j\to j)$
                 type singularities of $R$ over the image $f(F)$;
\item[$\bullet$] if $j$ is even, $s_j(F)$ equals the number of singularities of multiplicity $j$ or $j+1$ of $R$ over the image $f(F)$,
                 neither belonging to the second component of type $(j-1 \to j-1)$ singularities nor to the first component of type $(j+1 \to j+1)$ singularities.
\end{list}
Let $\omega_{\tilde h}=\omega_{\wt Y}\otimes\tilde h^*\omega_{B}^{-1}$ and $\wt R'=\wt R \setminus\wt V$,
where $\wt V$ is the union of vertical isolated $(-2)$-curves in $\wt R$.
Here a curve $C\subseteq \wt R$ is called to be {\it isolated} in $\wt R$,
if there is no other curve $C'\subseteq \wt R$ such that $C\cap C' \neq \emptyset$. We define
i$$\begin{aligned}
s_2&:=\left(\omega_{\tilde h}+\wt R'\right)\cdot \wt R'+2\sum_{F \text{~is singular}} s_2(F),\\
s_j&:=\sum_{F \text{~is singular}} s_j(F),\qquad\qquad \forall~ j\geq 3.
\end{aligned}$$
Note that the contraction $\psi$ is unique since $\gamma>0$ (although the order of these blowing-ups contained in $\psi$ is not unique).
Hence the invariants $s_j$'s are well-defined.
By definition, $s_j$ is non-negative for $j\geq 3$, but it is not clear whether $s_2$ is non-negative or not.
\end{definition}

\begin{lemma}\label{numberofcontraction}
Let $F$ be a singular fiber of the fibration $f$,
and $\wt F$ (resp. $\wt \Gamma$, resp. $\Gamma$)
the corresponding fiber in $\wt X$ (resp. $\wt Y$, resp. $Y$).
Then the $(-1)$-curves in $\wt F$ are in one-to-one correspondence to the isolated $(-2)$-curves
of $\wt R$, which are also contained in $\wt \Gamma$.
And the number of these $(-1)$-curves is equal to
$$n_2(F)+\sum_{k\geq 1} s_{2k+1}(F),$$
where $n_2(F)$ is the number of isolated $(-2)$-curves of $R$, which are also contained in $\Gamma$.
\end{lemma}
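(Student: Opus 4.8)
The plan is to separate the two assertions: the one-to-one correspondence, which I would read off from the geometry of the involution $\tilde\sigma$ and its ramification, and the numerical identity, which I would obtain by a local analysis of the canonical resolution (cf.\ \autoref{figure-1}) over the point $f(F)\in B$. For the correspondence I would first show that the vertical $(-1)$-curves of $\wt X$ are exactly the $\vartheta$-exceptional curves $E_p$ over the isolated fixed points $p\in F$ of $\sigma$. Indeed $f$ is relatively minimal, so $F$ contains no $(-1)$-curve; if a vertical $(-1)$-curve $E\subseteq\wt F$ were not $\vartheta$-exceptional, then $\vartheta(E)$ would be a fibre component with $\vartheta(E)^2=-1+\sum_p m_p^2$, where $m_p$ is its multiplicity at the blown-up point $p$. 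If no $p$ lies on $\vartheta(E)$ this is a $(-1)$-curve in $F$, contradicting minimality; otherwise $\vartheta(E)^2\geq0$ and Zariski's lemma forces $\vartheta(E)$ to be a whole irreducible fibre met once at a smooth point, so $E\cong F$ has arithmetic genus $g\geq2$, absurd for a $(-1)$-curve. As each $p$ is an isolated fixed point, $\sigma$ acts by $-\mathrm{id}$ on $T_pX$, so $\tilde\sigma$ fixes $E_p$ pointwise; hence $E_p$ lies in the ramification and $C_p:=\tilde\pi(E_p)\subseteq\wt R$ satisfies $\tilde\pi^*C_p=2E_p$, whence $C_p^2=\tfrac12(\tilde\pi^*C_p)^2=2E_p^2=-2$ by the projection formula. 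Conversely a $(-2)$-curve $C\subseteq\wt R$ gives $E:=\tilde\pi^{-1}(C)_{\mathrm{red}}\cong C$ with $\tilde\pi^*C=2E$ and $E^2=C^2/2=-1$. Since $\tilde\pi$ is a genuine double cover and $\wt X$ is smooth, $\wt R$ is smooth, so its components are pairwise disjoint and every $(-2)$-curve of $\wt R$ is automatically isolated; the two assignments are mutually inverse and respect $\wt\Gamma$ and $\wt F$, which gives the correspondence.

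For the count I would descend along $\psi:\wt Y\to Y$ and sort the $(-2)$-curves of $\wt R$ in $\wt\Gamma$ by whether $\psi$ contracts them. If $\psi(C)$ is a curve, then, being disjoint from all other components of the smooth divisor $\wt R$, $C$ avoids every centre of the blow-ups $\psi_i$ (these lie at singular points of the successive branch loci, i.e.\ where two branch components meet); thus $\psi$ is an isomorphism near $C$ and $\psi(C)$ is an isolated $(-2)$-curve of $R$ inside $\Gamma$, and conversely every such curve of $R$ lifts isomorphically. This accounts for exactly the $n_2(F)$ curves that $\psi$ does not contract.

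It remains to match the $(-2)$-branch-curves contracted by $\psi$ with the singularities of type $(2k+1\to2k+1)$, and this local study is the heart of the proof. At a centre $y_{i-1}\in R_{i-1}$ of multiplicity $m_{i-1}$ one has $R_i=\psi_i^*R_{i-1}-2[m_{i-1}/2]\mathcal E_i$. If $m_{i-1}=2k$ is even, then $\mathcal E_i$ has coefficient $0$ in $R_i$, lies off the branch, and contributes no branch curve. If $m_{i-1}=2k+1$ is odd, then $\mathcal E_i$ enters the branch with coefficient $1$ and has $\mathcal E_i^2=-1$ on $Y_i$; but a smooth double cover has no $(-1)$-curve in its branch (the preimage would have self-intersection $-\tfrac12$), so the resolution must keep blowing up the points of $\wt R$ on $\mathcal E_i$, each such blow-up lowering $\mathcal E_i^2$ by one. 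The point of the definition of type $(2k+1\to2k+1)$ is exactly that there is a unique singular point $y$ of multiplicity $2k+2$ on $\mathcal E_i$, so that a single further blow-up at $y$ (of even multiplicity, hence off the branch) brings $\mathcal E_i^2$ from $-1$ to $-2$ and detaches $\mathcal E_i$ from the strict transforms of the $2k+1$ local branches, producing one isolated $(-2)$-curve of $\wt R$. The main obstacle is to rule out every other odd configuration, and here a parity observation is decisive: since each branch curve has even self-intersection and each blow-up on $\mathcal E_i$ lowers $\mathcal E_i^2$ by one from $-1$, the final value is even iff an odd number $b$ of such blow-ups occur; $b=1$ forces all $2k+1$ branches through one point, i.e.\ the type $(2k+1\to2k+1)$, while any ordinary or intermediate arrangement needs $b\geq2$ and hence, by parity, $b\geq3$, giving $\mathcal E_i^2\leq-4$ and a preimage that is not a $(-1)$-curve. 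Carrying out this induction along the blow-up tower over each $y_{i-1}$ is the technical core; granting it, the three contributions --- none from even multiplicities, $n_2(F)$ from uncontracted curves, and $s_{2k+1}(F)$ from each odd type --- sum to the asserted $n_2(F)+\sum_{k\geq1}s_{2k+1}(F)$.
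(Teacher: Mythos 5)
Your overall strategy coincides with the paper's: identify the vertical $(-1)$-curves of $\wt X$ with the $(-2)$-curves of $\wt R$ lying in $\wt\Gamma$ (your treatment of this step, via Zariski's lemma and the local form of the involution at an isolated fixed point, is actually more detailed than the paper's one-line assertion), and then sort those $(-2)$-curves according to whether $\psi$ contracts them. The problem is in the descent step for the non-contracted ones. You claim that a $(-2)$-curve $C\subseteq\wt R$ with $\psi(C)$ a curve ``avoids every centre of the blow-ups $\psi_i$'' because those centres ``lie at singular points of the successive branch loci, i.e.\ where two branch components meet.'' The parenthetical is false: a singular point of a \emph{single} irreducible vertical component of $R$ (a node or cusp of a component contained in $\Gamma$) is also a singular point of the branch locus and hence a blow-up centre, and the strict transform of that component then does meet exceptional curves of $\psi$ even though it is disjoint from all other components of $\wt R$. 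So ``$\psi$ is an isomorphism near $C$'' does not follow from what you wrote, and a priori such a curve could contribute a $(-2)$-curve of $\wt R$ in $\wt\Gamma$ counted by neither $n_2(F)$ nor $\sum_k s_{2k+1}(F)$, breaking the formula.

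The paper closes exactly this hole by intersection theory rather than by a positional argument: writing $E=\psi^*(C)-\sum a_j\mathcal E_j$ with $a_j\geq 0$, one gets $-2=E^2=C^2-\sum a_j^2$ and $0=\omega_{\wt Y}\cdot E=\omega_Y\cdot C+\sum a_j$; Zariski's lemma gives $C^2<0$ (here $\gamma>0$ is used to exclude $C^2=0$), and $C^2=-1$ is excluded because adjunction would then make $C$ a $(-1)$-curve in a fibre of the relatively minimal $h$. Hence $C^2\leq-2$, forcing all $a_j=0$, so $C$ really is an isolated $(-2)$-curve of $R$ untouched by $\psi$. You should replace your positional claim by this computation (or an equivalent one). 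A secondary, lesser issue: your analysis of the exceptional-curve case is explicitly left as an induction you only ``grant''; the parity observation is sound and in the spirit of what the paper asserts without proof, but as written that part is a sketch, not a proof.
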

\begin{proof}
Note that the $(-1)$-curves in $\wt F$ are exactly the inverse image of the isolated fixed points of $\sigma$ on $F$, hence fixed by $\tilde\sigma$.
It follows that these $(-1)$-curves in $\wt F$ are in one-to-one correspondence to the isolated $(-2)$-curves
of $\wt R$, which are also contained in $\wt \Gamma$.

Let $E$ be such a $(-2)$-curve of $\wt R$. Then it is the strict inverse image of either an exceptional curve $\mathcal E_i$ or an irreducible curve $C$ on $\Gamma$.
In the first case, it is easy to see that $y_{i-1}=\psi_i(\mathcal E_i)$ is a singularity of $R_{i-1}$ with odd multiplicity $2k+1$,
and that $R_i$ has a unique singularity on $\mathcal E_i$ with multiplicity $2k+2$.
Equivalently, it corresponds to a singularity of $R$ of type $(2k+1 \to 2k+1)$.
In the later case, let
$$E=\psi^*(C)-\sum a_j\mathcal E_j,\quad \text{with~}a_j\geq 0.$$
Then
$$-2=E^2=C^2-\sum a_j^2,\qquad 0=\omega_{\wt Y}\cdot E=\omega_{Y}\cdot C+\sum a_j.$$
On the other hand, one has $C^2\leq 0$ and $C^2=0$ if and only if $\Gamma=nC$ for some $n$, since $C\subseteq \Gamma$.
Hence it follows that $C^2\neq 0$ since $\gamma>0$,
and that $C^2\neq -1$; otherwise by construction $C$ must be smooth and hence is $(-1)$-curve, which is impossible due to the relative minimality of $h$.
Therefore, $C$ must be an isolated $(-2)$-curve of $R$, which is also contained in $\Gamma$.

Conversely, it is clear that each singularity of $R$ of type $(2k+1\to 2k+1)$ creates an isolated $(-2)$-curve contained in $\wt R$, and that the inverse image of each isolated $(-2)$-curve
in $R$ is still an isolated $(-2)$-curve in $\wt R$.
The proof is complete.
\end{proof}

\begin{theorem}\label{thminvariants-double-fibration}
Let $f$ be a double cover fibration of type $(g, \gamma)$, and $s_i$'s the singularity indices as above. Then
$$\begin{aligned}
(2g+1-3\gamma)\omega_f^2~=&~\,x\cdot\frac{\omega_h^2}{\gamma-1}+yT+zs_2+\sum_{k\geq 1} a_ks_{2k+1}+\sum_{k\geq 2}b_ks_{2k},\\
(2g+1-3\gamma)\chi_f~=&~\,\bar x\cdot\frac{\omega_h^2}{\gamma-1}+2(2g+1-3\gamma)\chi_h+\bar yT\\
                    &\,+\bar zs_2-\frac{2g+1-3\gamma}{4}\cdot n_2 +\sum_{k\geq 1} \bar a_ks_{2k+1}+\sum_{k\geq 2}\bar b_ks_{2k},\\
e_f~=&~\,2e_h+s_2-3n_2+\sum_{k\geq 1} s_{2k+1}+\sum_{k\geq 2}2s_{2k},
\end{aligned}$$
where we set $\frac{\omega_h^2}{\gamma-1}=0$ if $\gamma=1$,~
$n_2=\sum\limits_{F~\text{is singular}} n_2(F),$ and
$$\begin{aligned}
&x=\frac{(3g+1-4\gamma)(g-1)}{2},&\,\quad\quad\quad\,&~ y=\frac{3}{2},&\qquad&~z=g-1;\qquad\quad\\
&\bar x=\frac{(g+1-2\gamma)^2}{8},&~&~ \bar y=\frac{1}{8},&&~\bar z=\frac{g-\gamma}{4}.
\end{aligned}$$\vspace{-0.2cm}
$$\begin{aligned}
&a_k\,=\,12\bar a_k-(2g+1-3\gamma),
&&b_k\,=\,12\bar b_k-2(2g+1-3\gamma),\\
&\bar a_k\,=\,k\big(g-1+(k-1)(\gamma-1)\big),
&\quad&\bar b_k\,=\,\frac{k\big(g-1+(k-2)(\gamma-1)\big)}{2},~
\end{aligned}~$$\vspace{-0.2cm}
$$~T=-\frac{\big((g+1-2\gamma)\omega_h-(\gamma-1)R\big)^2}{\gamma-1}-2(\gamma-1)n_2\geq 0.\qquad\qquad\qquad
$$
\end{theorem}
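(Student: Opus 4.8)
The plan is to compute the three invariants $\omega_f^2$, $\chi_f$ and $e_f$ by running the canonical resolution of \autoref{figure-1} ``from the bottom up'': first express everything in terms of intersection numbers on the relatively minimal quotient $Y$, and then eliminate the branch data against the singularity indices. To begin, I would record the standard double cover identities. Since $\tilde\pi\colon \wt X\to\wt Y$ is a \emph{smooth} double cover with branch $\wt R\equiv 2\wt L$, one has $\omega_{\wt X}^2=2(\omega_{\wt Y}+\wt L)^2$, $\chi(\mathcal O_{\wt X})=2\chi(\mathcal O_{\wt Y})+\tfrac12\wt L\cdot(\wt L+\omega_{\wt Y})$, and $e(\wt X)=2e(\wt Y)-e(\wt R)$ (the last may be traded for Noether's equality \eqref{eqnnoether}). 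I would then push these relations down the resolution tower. Each blowing-up $\psi_i$ of multiplicity $m_{i-1}$ twists $L$ by $-[m_{i-1}/2]\mathcal E_i$, so with $k_i:=[m_{i-1}/2]$ one gets the transparent corrections $(\omega_{Y_i}+L_i)^2=(\omega_{Y_{i-1}}+L_{i-1})^2-(k_i-1)^2$ and $L_i\cdot(L_i+\omega_{Y_i})=L_{i-1}\cdot(L_{i-1}+\omega_{Y_{i-1}})-k_i(k_i-1)$; summing over the tower and passing from $\wt Y$ to $Y$ expresses $\omega_{\wt X}^2$ and $\chi(\mathcal O_{\wt X})$ through $\omega_Y^2$, $\chi(\mathcal O_Y)$, $\omega_Y\cdot R$, $R^2$ and the numbers $\sum(k_i-1)^2$, $\sum k_i(k_i-1)$. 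Finally I would pass from $\wt X$ back to $X$: by \autoref{numberofcontraction} the morphism $\vartheta$ contracts exactly $n_2+\sum_{k\ge1}s_{2k+1}$ many $(-1)$-curves, which corrects $\omega_X^2$ and $e(X)$ correspondingly.

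The combinatorial heart of the argument is to reorganise the blow-up sums $\sum(k_i-1)^2$ and $\sum k_i(k_i-1)$ according to the type of the underlying singularity of $R$, as dictated by \autoref{definitionofs_i}. An isolated singularity of even multiplicity $2k$ contributes through $s_{2k}$, while an odd singularity of multiplicity $2k+1$ must be grouped with the infinitely near point of multiplicity $2k+2$ lying on its exceptional curve (the $(2k+1\to 2k+1)$ pair) and contributes through $s_{2k+1}$; the even indices carry their matching exclusions precisely so that no blow-up is counted twice. Carrying out this regrouping produces $\bar a_k=k\big(g-1+(k-1)(\gamma-1)\big)$ and $\bar b_k$, together with their counterparts $a_k=12\bar a_k-(2g+1-3\gamma)$ and $b_k=12\bar b_k-2(2g+1-3\gamma)$ after combining with Noether's relation; the term $-3n_2$ in $e_f$ and the $-\tfrac{2g+1-3\gamma}{4}n_2$ in $\chi_f$ track the isolated vertical $(-2)$-curves removed in $\wt R'$. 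I expect this regrouping, together with the exact cancellation of the base contributions $8(g-1)(b-1)$ against the fibre terms coming from $\omega_Y=\omega_h+(2b-2)\Gamma$ (using Hurwitz's relation \eqref{eqnhurwitz-formula}), to be the most error-prone step and hence the main obstacle.

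At this stage $\omega_f^2$ and $\chi_f$ are expressed through $\omega_h^2$, the singularity indices, and the two branch quantities $\omega_h\cdot R$ and $R^2$, which still have to be eliminated. Here I would invoke two relations: the adjunction content already built into $s_2=(\omega_{\tilde h}+\wt R')\cdot\wt R'+2\sum s_2(F)$, which packages $\omega_h\cdot R+R^2$, and the definition of $T$, which packages the Hodge combination $D:=(g+1-2\gamma)\omega_h-(\gamma-1)R$. Solving the resulting $2\times2$ system for $\omega_h\cdot R$ and $R^2$ has determinant $(\gamma-1)\big[(\gamma-1)+2(g+1-2\gamma)\big]=(\gamma-1)(2g+1-3\gamma)$, which is exactly why the factor $2g+1-3\gamma$ multiplies $\omega_f^2$ and $\chi_f$, and why $\omega_h^2$ enters divided by $\gamma-1$; the case $\gamma=1$ is the degenerate limit governed by the stated convention $\omega_h^2/(\gamma-1)=0$. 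It then remains only to establish $T\ge0$: since $D\cdot\Gamma=(g+1-2\gamma)(2\gamma-2)-(\gamma-1)(2g+2-4\gamma)=0$ by \eqref{eqnhurwitz-formula}, the algebraic index theorem yields $D^2\le0$, and refining this estimate by the isolated vertical $(-2)$-curves contained in $R$ (counted by $n_2$) sharpens it to $D^2\le-2(\gamma-1)^2n_2$, that is, $T\ge0$.
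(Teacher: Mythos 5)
Your proposal is correct and follows essentially the same route as the paper: standard double cover formulas pushed through the canonical resolution, regrouping of the blow-up corrections by singularity type per \autoref{definitionofs_i}, the passage from $\wt X$ to $X$ via \autoref{numberofcontraction}, elimination of $\omega_h\cdot R$ and $R^2$ in favour of $s_2$ and $T$ (the paper does this implicitly by writing the coefficients $x',y',z'$ directly, while you make the $2\times2$ system and its determinant $(\gamma-1)(2g+1-3\gamma)$ explicit), and the Hodge index argument applied to $(g+1-2\gamma)\omega_h-(\gamma-1)R'$ with $R'=R\setminus V$ to get $T\ge 0$. Your explicit identification of the determinant as the source of the overall factor $2g+1-3\gamma$ is a nice clarification but not a different method.
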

\begin{proof}
Recall the canonical resolution $\psi$ exhibited in \autoref{figure-1}.
By \autoref{numberofcontraction}, one has
$$\begin{aligned}
&\left(\omega_{\tilde h}+\wt R'\right)\cdot \wt R'-2\left(n_2+\sum_{k\geq 1} s_{2k+1}\right)\\=\,&\left(\omega_{\tilde h}+\wt R\right)\cdot \wt R
=\left(\omega_{h}+R\right)\cdot R-\sum_{i=1}^t\left(\left[\frac{m_i}2\right]-1\right)\cdot\left[\frac{m_i}2\right]\\
=\,&\left(\omega_{h}+R\right)\cdot R-\sum_{k\geq1}(8k^2+4k+2)s_{2k+1}-\sum_{k\geq 2}(4k^2-2k)s_{2k}-2\sum_{F~\text{is singular}} s_2(F).
\end{aligned}$$
Combining this with the definition of $s_2$, we get
\begin{equation}\label{eqn(omegah+R)R}
(\omega_{h}+R)\cdot R=(s_2-2n_2)+\sum_{k\geq1}4k(2k+1)s_{2k+1}+\sum_{k\geq 2}2k(2k-1)s_{2k}.
\end{equation}
Thus by the formulas for double covers (cf. \cite[\S\,V.22]{bhpv-04}), one obtains:
\begin{eqnarray}
\hspace{-0.2cm}\omega_{\tilde f}^2&\hspace{-0.2cm}=\hspace{-0.2cm}&2\left(\omega_h^2+\omega_h\cdot R +\frac{R^2}{4}\right)-2\left(\sum_{k\geq1}(2k^2-2k+1)s_{2k+1}+\sum_{k\geq 2}(k-1)^2s_{2k}\right)\nonumber\\
&\hspace{-0.2cm}=\hspace{-0.2cm}&x'\cdot\frac{\omega_h^2}{\gamma-1}+y'\big(T+2(\gamma-1)n_2\big)+z'(\omega_{h}+R)\cdot R\label{eqn-omega-tilde-f^2}\\
&\hspace{-0.2cm}\hspace{-0.2cm}&-2\left(\sum_{k\geq1}(2k^2-2k+1)s_{2k+1}+\sum_{k\geq 2}(k-1)^2s_{2k}\right),\nonumber
\end{eqnarray}
\begin{eqnarray}
\hspace{-0.2cm}\chi_{\tilde f}&\hspace{-0.2cm}=\hspace{-0.2cm}&2\chi_{h}+\frac12\left(\frac{\omega_h\cdot R}{2}+\frac{R^2}{4}\right)
                               -\left(\sum_{k\geq1}k^2s_{2k+1}+\sum_{k\geq 2}\frac{k(k-1)}{2}s_{2k}\right)\qquad\qquad\,\nonumber\\
&\hspace{-0.2cm}=\hspace{-0.2cm}&2\chi_{h}+\bar x'\cdot\frac{\omega_h^2}{\gamma-1}+\bar y'\big(T+2(\gamma-1)n_2\big)+\bar z'(\omega_{h}+R)\cdot R\label{eqn-chi-tilde-f}\\
&\hspace{-0.2cm}\hspace{-0.2cm}&-\left(\sum_{k\geq1}k^2s_{2k+1}+\sum_{k\geq 2}\frac{k(k-1)}{2}s_{2k}\right),\nonumber
\end{eqnarray}
where $\ast'=\frac{\ast}{2g+1-3\gamma}$ for $\ast=x,y,z,\bar x,\bar y$ or $\bar z$.
Note that $\omega_f^2=\omega_{\tilde f}^2+n_2+\sum\limits_{k\geq 1} s_{2k+1}$
and $\chi_{f}=\chi_{\tilde f}$ by \autoref{numberofcontraction}.
Therefore, the formulas in our theorem follow from the above equalities together with \eqref{eqn(omegah+R)R} and \eqref{eqnnoether}.

Note that $T=2(g-1)\omega_h\cdot R \geq 0$ if $\gamma=1$.
It remains to show that $T\geq 0$ if $\gamma>1$. For this purpose, let $V\subseteq R$ be these isolated $(-2)$-curves contracted by $h$, and $R'=R\setminus V$.
By \autoref{numberofcontraction}, the number of components contained in $V$ is $n_2$.
Since $\Gamma\cdot \big((g+1-2\gamma)\omega_h-(\gamma-1)R'\big)=0$, one gets by Hodge index theorem that
$$0\geq \big((g+1-2\gamma)\omega_h-(\gamma-1)R'\big)^2=\big((g+1-2\gamma)\omega_h-(\gamma-1)R\big)^2+2(\gamma-1)^2n_2.$$
Hence $T\geq 0$ as required.
\end{proof}

\subsection{Irregular double cover fibrations}\label{sec-irr-double}
In this subsection, we would like to prove the following restrictions on the invariants of irregular double cover fibrations.
\begin{definition}\label{def-irr-double}
The double cover fibration $f$ is called irregular if the irregularity $q_{\pi}:=q(\wt X)-q(\wt Y)$
of the induced double cover $\pi$ is positive,
where $\wt X$ and $\wt Y$ are the same as in the last subsection.
\end{definition}
%

\begin{proposition}\label{prop-restriction-invariants}
Let $f:\,X\to B$ be a double cover fibration of type $(g, \gamma)$.

{\rm (i)}
If the double cover $\pi$ is irregular, i.e., $q_{\pi}>0$, then
\begin{eqnarray}
\hspace{-0,3cm}&&\hspace{-0,3cm}2(g+1-2\gamma)s_2 \label{eqnq_pi>0}\\
\hspace{-0,3cm}&\leq&\hspace{-0,3cm}(g+1-2\gamma)^2\cdot \frac{\omega_h^2}{\gamma-1}+ T+\sum_{k\geq 1} 2(4\bar a_k+2g+1-3\gamma)s_{2k+1}+\sum_{k\geq 2}8\bar b_ks_{2k}.\qquad\nonumber
\end{eqnarray}

{\rm (ii)}
If the image $J_0(\wt X)\subseteq \Alb_0(\wt X)$ is a curve of geometric genus $g'>0$, then
\begin{eqnarray}
\hspace{-0,3cm}&&\hspace{-0,3cm}2(g+1-2\gamma)\left(s_2+\sum_{k\geq 1}^{g'-1}  4(2k+1)ks_{2k+1}+\sum_{k\geq 2}^{g'} 2(2k-1)ks_{2k}\right) \label{eqnq_pi>g'}\\
\hspace{-0,3cm}&\leq&\hspace{-0,3cm}(g+1-2\gamma)^2\cdot \frac{\omega_h^2}{\gamma-1}+ T+\sum_{k\geq g'}  2(4\bar a_k+2g+1-3\gamma) s_{2k+1}+\sum_{k\geq g'+1} 8\bar b_ks_{2k};\qquad\nonumber
\end{eqnarray}
where $\bar a_k$'s, $\bar b_k$'s are defined in \autoref{thminvariants-double-fibration}, and $J_0$ will be defined in \eqref{eqndef-of-J_0}.
\end{proposition}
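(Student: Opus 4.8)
The plan is to extract geometry from the irregularity by passing to the \emph{anti-invariant} part of the cohomology of $\wt X$ and then to convert the resulting vanishing into the stated numerical inequalities by running through the canonical resolution of \autoref{figure-1}. First I would identify $q_\pi$ with the anti-invariant holomorphic $1$-forms. Since $\tilde\pi_*\mathcal O_{\wt X}=\mathcal O_{\wt Y}\oplus\wt L^{-1}$ with $\wt L^{\otimes2}\equiv\mathcal O_{\wt Y}(\wt R)$, one has $q(\wt X)=q(\wt Y)+h^1(\wt Y,\wt L^{-1})$, so $q_\pi=h^1(\wt Y,\wt L^{-1})$; by Hodge symmetry this is the dimension of $H^0(\wt X,\Omega^1_{\wt X})^-$, the $(-1)$-eigenspace of $\tilde\sigma^*$. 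Via the cyclic cover splitting $\tilde\pi_*\Omega^1_{\wt X}=\Omega^1_{\wt Y}\oplus\bigl(\Omega^1_{\wt Y}(\log\wt R)\otimes\wt L^{-1}\bigr)$ these forms become sections on $\wt Y$; restricting to a general fibre $\tilde\Gamma$ of $\tilde h$ recovers the Prym-type picture of the genus-$g$ double cover $\wt G\to\tilde\Gamma$ branched in $2g+2-4\gamma$ points. The map $J_0$ is the Albanese map attached to this $(-1)$-eigenspace, so that \eqref{eqn-def-irreg}, i.e. $q_\pi>0$, says exactly that $J_0$ is non-constant.

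Next, for (ii) I assume $J_0(\wt X)$ is a curve of geometric genus $g'$; part (i) is the general situation in which $J_0$ is only known to be non-constant, and its bound is the formal specialization $g'=1$ of (ii) (the penalty sums being then empty). In the curve case a Castelnuovo--de Franchis argument shows that every anti-invariant form is pulled back from this curve and that $\tilde\sigma$ descends to the involution on it realizing the $(-1)$-action. This produces a distinguished effective class and forces the corresponding sections of $\Omega^1_{\wt Y}(\log\wt R)\otimes\wt L^{-1}$ to vanish along a divisor whose numerical type I would control.

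The key point is that this divisor is numerically proportional, modulo fibres of $\tilde h$, to $(g+1-2\gamma)\omega_h-(\gamma-1)R$ --- exactly the class occurring in $T$. Consequently the Hodge index theorem, applied just as in the proof of $T\geq0$ in \autoref{thminvariants-double-fibration} and using that this class is orthogonal to the fibre class, supplies $(g+1-2\gamma)^2\tfrac{\omega_h^2}{\gamma-1}+T$ (equivalently $2(g+1-2\gamma)\,\omega_h\cdot R-(\gamma-1)R^2-2(\gamma-1)n_2$) as the available positivity on the right-hand side. The global step is therefore a direct Hodge-index computation once the distinguished class has been pinned down.

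Finally I would carry out the local analysis at the singularities of $R$. Running through the blow-ups $\psi_i$, each singularity of type $(2k+1\to2k+1)$ and each singularity of even multiplicity $2k$ imposes a prescribed order of vanishing of the log-twisted anti-invariant forms on $\wt Y$, the bound depending on whether $k$ lies below or above the threshold $g'$ (reflecting that a form pulled back from a genus-$g'$ curve carries at most $2g'-2$ zeros). Summing these local contributions and matching them against \eqref{eqn(omegah+R)R} together with the coefficients $\bar a_k,\bar b_k$ of \autoref{thminvariants-double-fibration} produces the signed combination of the $s_j$ on the left-hand side, yielding \eqref{eqnq_pi>0} and \eqref{eqnq_pi>g'}. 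I expect the bookkeeping of these local orders of vanishing --- in particular the correct split at $k=g'$ and the separate treatment of the even-multiplicity and $(2k+1\to2k+1)$ singularities --- to be the main obstacle; the positivity input, by contrast, is routine once the class $(g+1-2\gamma)\omega_h-(\gamma-1)R$ has been identified.
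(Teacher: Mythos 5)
Your starting point is right---$q_\pi$ is the dimension of the anti-invariant part of $H^0(\wt X,\Omega^1_{\wt X})$, and the content of the proposition comes from the map $J_0$ to the anti-invariant Albanese---but the mechanism you propose for converting this into the stated inequalities does not work, and it is not the paper's. You want to produce a ``distinguished divisor'' proportional (modulo fibres) to $(g+1-2\gamma)\omega_h-(\gamma-1)R$ and then invoke the Hodge index theorem. The Hodge index theorem applied to that fibre-orthogonal class only reproduces $T\geq 0$, which is proved unconditionally in \autoref{thminvariants-double-fibration} and makes no use of $q_\pi>0$; it cannot bound $s_2$ from above, which is the whole point of \eqref{eqnq_pi>0}. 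Moreover the claimed numerical proportionality of the vanishing locus of the anti-invariant forms to that class is asserted without justification and is not something one can expect in general. In the statement, the combination $(g+1-2\gamma)^2\frac{\omega_h^2}{\gamma-1}+T$ is merely a notational repackaging of $2(g+1-2\gamma)\,\omega_h\cdot R-(\gamma-1)R^2-2(\gamma-1)n_2$; no positivity is being ``supplied'' there.

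The actual argument runs differently. First (\autoref{lemmawtR-contracted}) the ramification divisor $\wt{\mathcal R}$ is contracted by $J_0$, by the local computation that anti-invariant $1$-forms restrict to zero on the fixed locus. Second, using \autoref{lemma-order-decrease} one reorders the blow-ups of the canonical resolution by decreasing order and stops at the intermediate surface $\hat Y_{M-g'}$: every exceptional curve introduced after that stage has strict transform in $\wt X$ of geometric genus less than $g'$ (Hurwitz), hence is contracted by $J_0$ when $J_0(\wt X)$ is a curve of genus $g'$ (for part (i) one only needs that rational curves map to points in an abelian variety). Therefore the total inverse image in $\wt X$ of the image $\hat R_{M-g'}$ of $\wt R$ is contracted by $J_0$, so its intersection form is semi-negative definite, and counting the isolated vertical $(-2)$-curves via \autoref{numberofcontraction} gives $\hat R_{M-g'}^2\leq -2\big(n_2+\sum_{k\geq g'}s_{2k+1}\big)$. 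This single inequality, unpacked through the blow-up formula for $\hat R_{M-g'}^2$ and the identity \eqref{eqn(omegah+R)R}, is exactly \eqref{eqnq_pi>g'}. Your proposal is missing this contraction/semi-negativity step entirely, and the local ``orders of vanishing'' bookkeeping you defer to is not a substitute for it.
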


The main tool to prove the above proposition is the usage of Albanese varieties.
We first review the Albanese varieties and show that the ramified divisor is contracted by $J_0$.
Then the proposition follows from the semi-negativity of the divisors contracted by some non-trivial map.

Let $\wt {\mathcal R}=\tilde\pi^{-1}(\wt R)\subseteq \wt X$ the ramified divisor.
Let $\Alb(\wt X)$ (resp. $\Alb(\wt Y)$) be the Albanese variety of $\wt X$ (resp. $\wt Y$),
and $\tau$ the generator of the Galois group ${\rm Gal}(\wt X/\wt Y)\cong \mathbb Z/2\mathbb Z$.
Then we have a natural map $\Alb(\tilde\pi):\, \Alb(\wt X)\to \Alb(\wt Y)$ and $\tau$ has a natural action on $\Alb(\wt X)$.
Let
$$\Alb_0(\wt X)=\left\{x\in \Alb(\wt X)~\big|~\tau(x)=-x\right\}.$$
Then it is clear that $\Alb(\wt X)$ is isogenous to $\Alb_0(\wt X) \oplus \Alb(\tilde\pi)^{-1}\big(\Alb(\wt Y)\big)$ and $\dim \Alb_0(\wt X)=q_{\pi}$.
Denote by
\begin{equation}\label{eqndef-of-J_0}
J_0:\,\wt X \to \Alb_0(\wt X)
\end{equation}
the induced map.

\begin{lemma}\label{lemmawtR-contracted}
The ramified divisor $\wt {\mathcal R}$ is contracted by the map $J_0$.
\end{lemma}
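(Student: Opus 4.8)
The plan is to combine the equivariance of the Albanese map with the fact that $\tilde\sigma$ acts as $-1$ on $\Alb_0(\wt X)$. Recall that the involution $\tilde\sigma$ is exactly the generator $\tau$ of $\mathrm{Gal}(\wt X/\wt Y)$, and that $\Alb_0(\wt X)$ is by construction the anti-invariant part of $\Alb(\wt X)$, so $\tau$ acts on it as multiplication by $-1$. The first point I would record is that the ramification divisor $\wt{\mathcal R}=\tilde\pi^{-1}(\wt R)$ is fixed pointwise by $\tau$: since $\tilde\pi$ restricts to an isomorphism from $\wt{\mathcal R}$ onto the branch locus $\wt R$, each point of $\wt{\mathcal R}$ is the unique preimage of its image and hence a fixed point of the involution. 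In particular $\wt{\mathcal R}\neq\emptyset$, so we may fix a base point $x_0\in\wt{\mathcal R}$.

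With this choice the Albanese map $\mathrm{alb}\colon\wt X\to\Alb(\wt X)$ is genuinely $\tau$-equivariant, that is $\mathrm{alb}\circ\tau=\tau_*\circ\mathrm{alb}$ with no translation term, because $\mathrm{alb}(x_0)=0$ and $x_0$ is $\tau$-fixed. Composing $\mathrm{alb}$ with the projection onto the anti-invariant factor $\Alb_0(\wt X)$, on which $\tau_*=-\mathrm{id}$, then yields the relation
$$J_0(\tau(x))=-J_0(x),\qquad\forall\,x\in\wt X.$$
Now I would evaluate this on the fixed locus. For any $p\in\wt{\mathcal R}$ we have $\tau(p)=p$, whence $J_0(p)=-J_0(p)$, i.e. $2J_0(p)=0$. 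Thus $J_0(p)$ lies in the $2$-torsion subgroup $\Alb_0(\wt X)[2]$, which is finite. Since $J_0$ is a morphism and every irreducible component of $\wt{\mathcal R}$ is connected, its image is forced to be a single one of these finitely many points; hence each component of $\wt{\mathcal R}$ is contracted by $J_0$, as claimed.

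The step demanding the most care is the identification of $\tau_*$ with $-\mathrm{id}$ on $\Alb_0(\wt X)$ and the accompanying bookkeeping: one must ensure that the isogeny decomposition $\Alb(\wt X)\sim\Alb_0(\wt X)\oplus\Alb(\tilde\pi)^{-1}\big(\Alb(\wt Y)\big)$ is genuinely $\tau$-stable and that the projection used to define $J_0$ is the anti-invariant one, so that the equivariance really produces a sign rather than merely a finite-order automorphism. Once the $\tau$-fixed base point is chosen, everything else, namely the pointwise-fixed property of $\wt{\mathcal R}$ and the finiteness of $\Alb_0(\wt X)[2]$, is formal.
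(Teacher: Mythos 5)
Your argument is correct, but it is genuinely different from the one in the paper. The paper works infinitesimally: it takes an irreducible component $C\subseteq\wt{\mathcal R}$, observes that $J_0^*$ lands in the anti-invariant part $H^0\big(\wt X,\Omega^1_{\wt X}\big)_{-1}$, and then kills the restriction of every anti-invariant $1$-form to $C$ by a local computation in coordinates $(x,y)$ with $\tau(x,y)=(x,-y)$ and $C=\{y=0\}$; since a non-constant map from a curve to an abelian variety pulls back $1$-forms non-trivially, $C$ must be contracted. You instead work at the level of the group structure: $\wt{\mathcal R}$ is pointwise fixed by $\tau$, the Albanese map based at a fixed point is strictly $\tau$-equivariant, $\tau$ acts as $-1$ on the anti-invariant isogeny factor $\Alb_0(\wt X)$, so $J_0$ maps the fixed locus into the finite set $\Alb_0(\wt X)[2]$ and hence contracts each connected component. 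Both routes are sound; yours avoids normalizing components and the local coordinate computation, at the price of the bookkeeping you correctly flag (the decomposition is only up to isogeny, so the ``projection'' should be taken as, say, $1-\tau_*$ followed by the identification of its image with $\Alb_0(\wt X)$, which does intertwine $\tau_*$ with $-\mathrm{id}$). Two trivial points to tidy: the assertion ``in particular $\wt{\mathcal R}\neq\emptyset$'' does not follow from what precedes it --- the cover may be \'etale, in which case the lemma is vacuous and you should simply dispose of that case first; and even without a $\tau$-fixed base point the equivariance holds up to a translation, so that $2J_0(p)$ is a fixed constant on the fixed locus and the image still lies in a finite coset of the $2$-torsion, so nothing is really at stake there.
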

\begin{proof}
Let $C\subseteq \wt {\mathcal R}$ be any irreducible component, $\wt C$ its normalization,
$j:\,\wt C \to \wt X$ the induced map and $\varphi=J_0\circ j:\,\wt C \to \Alb_0(\wt X)$ the composition.
We have to prove that $\varphi(\wt C)$ is a point.

We argue by contradiction. Assume that $\varphi(\wt C)$ is not a point.
Then the induced map
$$\varphi^*:~H^0\left(\Alb_0(\wt X),\,\Omega_{\Alb_0(\wt X)}^1\right) \lra H^0\left(\wt C,\,\Omega_{\wt C}^1\right)$$
is non-zero. On the other hand, it is clear that $\varphi^*$ factors through
$$H^0\left(\Alb_0(\wt X),\,\Omega_{\Alb_0(\wt X)}^1\right) \overset{J_0^*}\lra H^0\left(\wt X,\,\Omega_{\wt X}^1\right)
\overset{j^*}\lra H^0\left(\wt C,\,\Omega_{\wt C}^1\right).$$
Note that the generator $\tau$ of the Galois group ${\rm Gal}(\wt X/\wt Y)$ acts on $H^0\left(\wt X,\,\Omega_{\wt X}^1\right)$.
Let $$H^0\left(\wt X,\,\Omega_{\wt X}^1\right)_{-1} \oplus H^0\left(\wt X,\,\Omega_{\wt X}^1\right)_{1}$$ be the eigenspace decomposition.
Then by construction, the image of $J_0^*$ is contained in $H^0\left(\wt X,\,\Omega_{\wt X}^1\right)_{-1}$.
To deduce a contradiction, it suffices to prove that the restricted map
$$j^*\big|_{H^0\left(\wt X,\,\Omega_{\wt X}^1\right)_{-1}}:~H^0\left(\wt X,\,\Omega_{\wt X}^1\right)_{-1} \lra H^0\left(\wt C,\,\Omega_{\wt C}^1\right)$$
is zero.

In fact, let $p\in C$ be an arbitrary smooth point of $C$.
Locally around $p$, there exists local coordinate $(x,y)$ such that the action of $\tau$ is given by $\tau(x,y)=(x,-y)$ and $C$ is defined by $y=0$.
For any $1$-form $$\omega=\alpha(x,y)dx+\beta(x,y)dy \in H^0\left(\wt X,\,\Omega_{\wt X}^1\right),$$ one has
$$\omega\in H^0\left(\wt X,\,\Omega_{\wt X}^1\right)_{-1} \Longleftrightarrow \alpha(x,y)=y\tilde \alpha(x,y^2), ~\beta(x,y)=\tilde \beta(x, y^2).$$
Hence if $\omega \in H^0\left(\wt X,\,\Omega_{\wt X}^1\right)_{-1}$, one gets that $j^*\omega\big|_{j^{-1}(p)}=0$,
from which it follows that $j^*\omega=0$ since $p$ is arbitrary.
The proof is complete.
\end{proof}

\begin{lemma}\label{lemma-order-decrease}
Let $y_j \in R_{j}\subseteq Y_{j}$ be a singularity infinitely closed to $y_{i}\in R_{i}\subseteq Y_{i}$ as in the canonical resolution in Figure {\rm\ref{figure-1}}.
Then
\begin{equation*}
m_j\leq m_i,\quad\text{if $m_i$ is even;}\qquad\quad
m_j\leq m_i+1,\quad\text{if $m_i$ is odd.}
\end{equation*}
\end{lemma}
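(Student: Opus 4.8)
The plan is to reduce the estimate to a single blow-up in the first infinitesimal neighbourhood and then to propagate it along the chain of infinitely near points joining $y_i$ to $y_j$, using a parity bookkeeping to prevent the odd-multiplicity corrections from accumulating.

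First I would record the effect of one blow-up. Let $y_k$ be a point of $R_k$ of multiplicity $m_k$, let $\mathcal E_{k+1}$ be the exceptional curve of the blow-up $\psi_{k+1}$ centred at $y_k$, and let $\widetilde{R_k}$ denote the strict transform of $R_k$. Since $\psi_{k+1}^*R_k=\widetilde{R_k}+m_k\mathcal E_{k+1}$, the resolution formula $R_{k+1}=\psi_{k+1}^*R_k-2[m_k/2]\,\mathcal E_{k+1}$ becomes
\[
R_{k+1}=\widetilde{R_k}+\epsilon_k\,\mathcal E_{k+1},\qquad \epsilon_k:=m_k-2[m_k/2]\in\{0,1\},
\]
so $\epsilon_k=1$ exactly when $m_k$ is odd. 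For any point $q$ lying on $\mathcal E_{k+1}$, the multiplicity of the strict transform cannot increase under blow-up, i.e. $\mathrm{mult}_q(\widetilde{R_k})\le m_k$, while the smooth curve $\mathcal E_{k+1}$ contributes exactly $\epsilon_k$ at $q$; hence
\[
\mathrm{mult}_q(R_{k+1})\le m_k+\epsilon_k .
\]

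Next, since $y_j$ is infinitely near $y_i$, there is a unique chain $y_i=q_0,q_1,\dots,q_s=y_j$ in which each $q_{t+1}$ lies on the exceptional curve of the blow-up centred at $q_t$. As blowing up a point alters the branch divisor only near its centre, the blow-ups of the canonical resolution that do not belong to this chain leave the local multiplicity at each $q_t$ unchanged; writing $\nu_t$ for the multiplicity of the branch divisor at $q_t$ at the stage where $q_t$ is blown up, one obtains a well-defined sequence with $\nu_0=m_i$, $\nu_s=m_j$, and, by the one-step estimate, $\nu_{t+1}\le\nu_t+\epsilon_t$ with $\epsilon_t=1$ iff $\nu_t$ is odd. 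A short induction now concludes: if $m_i$ is even, the invariant $\nu_t\le m_i$ is preserved (an even $\nu_t$ does not grow, while an odd $\nu_t\le m_i$ is automatically $\le m_i-1$, so $\nu_{t+1}\le m_i$), giving $m_j\le m_i$; if $m_i$ is odd, then $\nu_1\le m_i+1$ and the invariant $\nu_t\le m_i+1$ is preserved for the same parity reason, giving $m_j\le m_i+1$.

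The one-step multiplicity estimate is routine. I expect the main obstacle to be the transition from the one-step to the global statement: one must check that the multiplicities along the chain are genuinely comparable across the interleaved blow-ups of the full resolution, and then run the parity invariant so that the harmless $+1$ from each odd multiplicity never compounds. That parity bookkeeping is the real heart of the argument, since it is precisely what keeps the bound at $m_i+1$ rather than letting it drift upward.
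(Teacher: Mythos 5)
Your proof is correct and follows essentially the same route as the paper's: reduce to a single blow-up, observe that the exceptional curve enters the new branch divisor exactly when the multiplicity is odd (contributing the possible $+1$), and note that the strict transform's multiplicity cannot exceed that of the centre. The paper's own proof is just a terser version of this, leaving the reduction to one step and the parity induction implicit, which you have spelled out correctly.
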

\begin{proof}
It suffices to consider the case where $j=i+1$ and $\psi_{i+1}(y_{i+1})=y_i$.
But this is clear because if $m_i$ is even, then $\mathcal E_{i+1}\nsubseteq R_{i+1}$;
and if $m_i$ is odd, then $\mathcal E_{i+1}\subseteq R_{i+1}$.
\end{proof}

\begin{proof}[Proof of \autoref{prop-restriction-invariants}]
Recall that those blowing-ups $\psi_i$'s are contained in the canonical resolution $\psi$.
For convenience, we view $\psi_i\circ\psi_{i+1}:\,Y_{i+1}\to Y_{i-1}$ as a single blowing-up (but with two exceptional curves)
if $$Y_{i+1}\overset{\psi_{i+1}}\lra Y_{i} \overset{\psi_{i}}\lra Y_{i-1}$$ are blowing-ups of a type-$(2k+1 \to 2k+1)$ singularity.
For a blowing-up $\psi'$ contained in $\psi$,
the order of $\psi'$ is defined to be $k+1$ if $\psi'$ is a blowing-up of a type-$(2k+1 \to 2k+1)$ singularity,
and to be $[m'/2]$ if $\psi'$ is a blowing-up of a singularity of the branch divisor with multiplicity $m'$.
Now we introduce a partial order on these blowing-ups contained in $\psi$:
we say $\psi'\geq \psi''$ if $k'\geq k''$, where $k'$ (resp. $k''$) is the order of $\psi'$ (resp. $\psi''$).
According to \autoref{lemma-order-decrease}, we can reorder these blowing-ups contained in $\psi$ such that
$\psi_i\geq \psi_j$ if $i<j$. Let $M$ be the maximal order of these blowing-ups contained in $\psi$.
Then $\psi$ can be decomposed as
$$\xymatrix{\wt Y\ar@{=}[r] &\hat Y_{M}\ar@/_7mm/"1,5"^-{\psi} \ar[r]^-{\hat\psi_{M}} & \cdots\cdots \ar[r]^-{\hat\psi_2} & \hat Y_1 \ar[r]^-{\hat\psi_1} & \hat Y_0 \ar@{=}[r]& Y}
$$
such that the order of each blowing-up contained in $\hat \psi_i$ is $M+1-i$.

Consider any blowing-up $\psi'$ contained in $\hat\psi_i$.
If it is a blowing-up of a type-$\big(2(M-i)+1 \to 2(M-i)+1\big)$ singularity, let $\mathcal E_1$ and $\mathcal E_2$ be the two exceptional curves.
By construction, one of them, saying $\mathcal E_1$ is contained in the branch divisor, hence its strict inverse image on $\wt X$ is a rational curve;
another one, saying $\mathcal E_2$, is not contained in the branch divisor and intersects the branch divisor at most $2\big(M-i\big)+2$ points,
hence the geometric genus of its strict inverse image on $\wt X$ is at most $M-i$ by Hurwitz formula (cf. \cite[\S\,IV.2]{hartshorne-77}).
If $\psi'$ is an ordinary blowing-up with one exceptional curve $\mathcal E$, then one can prove similarly that
the geometric genus of its strict inverse image on $\wt X$ is also at most $M-i$.
In any case, we obtain that the strict inverse image of any exceptional curve of $\hat \psi_i$ has geometric genus at most $M-i$.

Consider first the case when $J_0(\wt X)$ is a curve of geometric genus $g'>0$. In this case, any curve of geometric genus less than $g'$ is contracted by $J_0$.
Hence combining this with the above arguments and \autoref{lemmawtR-contracted}, we conclude that the total inverse image of $\hat R_{M-g'}$ in $\wt X$ is contracted by $J_0$,
where $\hat R_{M-g'}\subseteq \hat Y_{M-g'}$ is the image of $\wt R$.
In particular, the total inverse image of $\hat R_{M-g'}$ is semi-negative definite,
which implies that $\hat R_{M-g'}$ is also semi-negative definite.
By construction, each blowing-up contained in
$$\hat \psi_{M-g'+1}\circ\cdots\circ\hat\psi_{M}:~\wt Y=\hat Y_M \lra \hat Y_{M-g'}$$
has order less than or equal to $g'$.
Thus there exist $n_2+\sum\limits_{k\geq g'} s_{2k+1}$ vertical isolated $(-2)$-curves contained in $\hat R_{M-g'}$ by \autoref{numberofcontraction},
since the image of any isolated $(-2)$-curve contained in $\wt R$ is still an isolated $(-2)$-curve contained in $\hat R_{M-g'}$. Therefore
\begin{equation}\label{eqnpfq_pi>g'1}
\hat R_{M-g'}^2 \leq -2\left(n_2+\sum\limits_{k\geq g'} s_{2k+1}\right).
\end{equation}
By construction, we have
$$\begin{aligned}
\hat R_{M-g'}^2=&\,R^2-\left(\sum_{k\geq g'} 4(2k^2+2k+1)s_{2k+1}+\sum_{k\geq g'+1}4k^2s_{2k}\right)\\
=&\,\hat x\cdot\frac{\omega_h^2}{\gamma-1}+\hat y\big(T+2(\gamma-1)n_2\big)+\hat z\left(\omega_{h}+R\right)\cdot R\\
&~-\left(\sum_{k\geq g'} 4(2k^2+2k+1)s_{2k+1}+\sum_{k\geq g'+1}4k^2s_{2k}\right),
\end{aligned}$$
where
$$\hat x=\frac{-(g+1-2\gamma)^2}{(2g+1-3\gamma)},\qquad \hat y=\frac{-1}{(2g+1-3\gamma)},\qquad \hat z=\frac{2g+2-4\gamma}{2g+1-3\gamma}.
$$
Hence \eqref{eqnq_pi>g'} follows from the above equation together with \eqref{eqn(omegah+R)R} and \eqref{eqnpfq_pi>g'1}.

Finally, let's consider the case when $q_{\pi}>0$. In this case, $J_0(\wt X)$ is of positive dimension since $J_0(\wt X)$ generates $\Alb_0(\wt X)$ by construction,
and any rational curve in $\wt X$ is contracted by $J_0$.
Hence similarly as above, one sees that $\hat R_{M-1}$ is semi-negative definite and
\begin{equation}\label{eqnpfq_pi>01}
\hat R_{M-1}^2 \leq -2\left(n_2+\sum\limits_{k\geq 1} s_{2k+1}\right).
\end{equation}
Therefore, \eqref{eqnq_pi>0} follows from a similar argument as above.
\end{proof}

In order to use \autoref{prop-restriction-invariants}\,(ii),
we have to know when $J_0(\wt X)$ is a curve, where $J_0$ is defined in \eqref{eqndef-of-J_0}.

\begin{lemma}[\cite{cai-98}]\label{lem-cai}
	If $q_{\pi}>\gamma+1$, then the image $J_0(\wt X)\subseteq \Alb_0(\wt X)$ is a curve of genus at least $q_{\pi}$.
\end{lemma}
\begin{proof}
	First note that if $J_0(\wt X)\subseteq \Alb_0(\wt X)$ is a curve, then its genus is at least $q_{\pi}$
	since $J_0(\wt X)$ generates $\Alb_0(\wt X)$ and $\dim \Alb_0(\wt X)=q_{\pi}$.
	Hence it suffices to prove that $J_0(\wt X)$ is a curve.
	
	Let $\wt F$ be a general fibre of $\tilde f$, and $\wt \Gamma=\tilde\pi(\wt F) \subseteq \wt Y$.
	Consider the linear map $$\varsigma:\,\wedge^2 H^{1,0}\big(\Alb_0(\wt X)\big) \cong H^{2,0}\big(\Alb_0(\wt X)\big) \to H^{1,0}(\wt F)$$
	obtained by composing the linear map $$H^{2,0}\big(\Alb_0(\wt X)\big) \lra H^{2,0}(\wt X)$$ with the restriction map
	$$H^{2,0}(\wt X) \cong H^{0}\big(\wt S,\,\omega_{\wt S}\big) \lra H^{0}\big(\wt F,\,\omega_{\wt F}\big) \cong H^{1,0}(\wt F),$$
	where $\omega_{\wt X}$ (resp. $\omega_{\wt F}$) is the canonical sheaf of $\wt X$ (resp. $\wt F$).
	Note that the generator $\tau$ of the Galois group ${\rm Gal}(\wt X/\oly)$ acts on $H^{1,0}\big(\Alb_0(\wt X)\big)$ by multiplying $-1$,
	from which it follows that the image $\im(\varsigma)$ is contained in the invariant subspace
	$H^{0}\big(\wt F,\,\omega_{\wt F}\big)^{\tau} \cong H^{0}\big(\wt C,\,\omega_{\wt C}\big)$.
	In particular, one has
	$$\dim \im(\varsigma) \leq \dim H^{0}\big(\wt C,\,\omega_{\wt C}\big)=\gamma.$$
	On the other hand, if $J_0(\wt X)$ is a surface, then it is proved by Xiao (cf. \cite[Theorem\,2]{xiao-87},
	see also \cite[Lemma\,1]{pirola-89} by Pirola) that
	$$\dim \im(\varsigma) \geq q_{\pi} -1.$$
	From the two above inequalities it follows that $J_0(\wt X)$ is a curve if $q_{\pi}>\gamma+1$.
\end{proof}

\subsection{Slope of double cover fibrations}\label{sec-slope-double}
In this subsection, we would like to consider the question on the lower bound of the slope for double cover fibrations.
The main techniques are \autoref{thminvariants-double-fibration} and \autoref{prop-restriction-invariants}.

Based on \autoref{thminvariants-double-fibration}, we can reprove the following lower bound of the slope for a double cover fibration,
which was proved earlier by Barja, Zucconi, Cornalba and Stoppino.
\begin{theorem}[{\cite[Cor.\,2.6]{barja-zucconi-01} \& \cite[Thm.\,2.1]{barja-01} \& \cite[Thm.\,3.1\,\&\,3.2]{cornalba-stoppino-08}}]\label{thm-double-1}
Let $f$ be a double cover fibration of type $(g,\gamma)$. If $h$ is locally trivial or $g\geq 4\gamma+1$, then
\begin{equation}\label{eqn4(g-1)/(g-gamma)}
\lambda_f\geq \frac{4(g-1)}{g-\gamma}.
\end{equation}
\end{theorem}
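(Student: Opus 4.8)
The statement asserts the slope bound $\lambda_f \geq \frac{4(g-1)}{g-\gamma}$ for a double cover fibration of type $(g,\gamma)$ whenever $h$ is locally trivial or $g\geq 4\gamma+1$. My strategy is to derive this bound directly from the explicit invariant formulas of \autoref{thminvariants-double-fibration}, turning the geometric inequality into an algebraic comparison between two nonnegative linear combinations of the singularity indices $s_j$ and the auxiliary quantities $\frac{\omega_h^2}{\gamma-1}$, $T$, and $n_2$. Since $\lambda_f = \omega_f^2/\chi_f$, the desired inequality $\omega_f^2(g-\gamma)\geq 4(g-1)\chi_f$ is equivalent, after multiplying through by the positive factor $(2g+1-3\gamma)$ (positive because $g\geq 4\gamma+1$ forces $2g+1-3\gamma \geq 2g+1-3\gamma > 0$), to a single inequality among the terms on the right-hand sides of the two displayed formulas for $(2g+1-3\gamma)\omega_f^2$ and $(2g+1-3\gamma)\chi_f$.

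\medskip

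\emph{First}, I would substitute the formulas of \autoref{thminvariants-double-fibration} into the quantity
\[
\Delta := (g-\gamma)\cdot(2g+1-3\gamma)\,\omega_f^2 - 4(g-1)\cdot(2g+1-3\gamma)\,\chi_f,
\]
and collect the coefficients of each independent quantity: $\frac{\omega_h^2}{\gamma-1}$, $T$, $\chi_h$, $s_2$, $n_2$, and each $s_{2k+1}$, $s_{2k}$. Using $x=\frac{(3g+1-4\gamma)(g-1)}{2}$, $\bar x = \frac{(g+1-2\gamma)^2}{8}$, and the explicit $a_k,b_k,\bar a_k,\bar b_k$, the goal is to show $\Delta\geq 0$. \emph{Second}, I expect each individual coefficient to be manifestly nonnegative once $g\geq 4\gamma+1$ is used: the coefficient of $\chi_h$ is a negative multiple of $-8(g-1)\chi_h$ but $\chi_h\geq 0$, so that term must be handled by noting it carries the correct sign, and the $\frac{\omega_h^2}{\gamma-1}$ coefficient should reduce to something controlled by the Castelnuovo-type positivity $\omega_h^2\geq 0$ together with $T\geq 0$ (the latter established at the end of \autoref{thminvariants-double-fibration}). \emph{Third}, the locally trivial case is the degenerate one where $\frac{\omega_h^2}{\gamma-1}=0$ (or $\chi_h=0$), and there the inequality should follow from the nonnegativity of the $s_j$ coefficients alone.

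\medskip

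The main obstacle I anticipate is the term in $\chi_f$ carrying $\chi_h$ with coefficient $2(2g+1-3\gamma)$: in $\Delta$ this contributes $-8(g-1)(2g+1-3\gamma)\chi_h$, which is negative since $\chi_h\geq 0$. To absorb it I would need to exploit the companion Noether-type relation $12\chi_h = \omega_h^2 + e_h$ on the quotient fibration, trading $\chi_h$ against $\omega_h^2$ and $e_h\geq 0$, so that the apparently bad $\chi_h$ contribution recombines with the $\frac{\omega_h^2}{\gamma-1}$ term into a single nonnegative expression. The delicate bookkeeping is to verify that after this substitution the net coefficient of $\omega_h^2$ (respectively of $e_h$) is nonnegative precisely under the hypothesis $g\geq 4\gamma+1$, and that the borderline linear algebra in $g,\gamma$ works out; this is where the slope inequality for the base fibration $h$ (\autoref{thm-chx}) applied to $h$ may be needed to close the gap when $\gamma\geq 2$. \emph{Finally}, once all coefficients are checked to be nonnegative and the signs of $T$, $n_2$, $\omega_h^2$, $e_h$, $\chi_h$, and the $s_j$ ($j\geq 3$) are accounted for—recalling that only $s_2$ may a priori be negative, so its coefficient in $\Delta$ must vanish or be shown harmless via the identity \eqref{eqn(omegah+R)R}—the conclusion $\Delta\geq 0$, hence \eqref{eqn4(g-1)/(g-gamma)}, follows.
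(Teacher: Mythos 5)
Your skeleton is in fact the paper's: substitute the formulas of \autoref{thminvariants-double-fibration} into $\omega_f^2-\lambda\cdot\chi_f$ with $\lambda=\frac{4(g-1)}{g-\gamma}$ (this is exactly \eqref{eqnomega_f^2-lambda-chi_f}), note that this choice of $\lambda$ makes the coefficient $\frac{4(g-1)-(g-\gamma)\lambda}{4}$ of $s_2$ vanish identically --- confirming your guess that the one index of a priori unknown sign must drop out rather than be estimated --- check via \eqref{eqnhurwitz-formula} that the coefficients of $n_2$ and of $s_j$ for $j\geq 3$ are nonnegative, and then absorb the leftover $-2\lambda\cdot\chi_h$ against the $\frac{\omega_h^2}{\gamma-1}$ and $T$ terms. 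For $\gamma\geq 2$ your plan closes correctly: the slope inequality $\omega_h^2\geq\frac{4(\gamma-1)}{\gamma}\chi_h$ applied to $h$ is precisely what is needed, and the hypothesis $g\geq 4\gamma+1$ is what makes the resulting coefficient of $\chi_h$ nonnegative. The locally trivial case is likewise immediate.

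The genuine gap is the sub-case $\gamma=1$, $g\geq 5$. There both of your proposed tools fail: the relative Noether formula $12\chi_h=\omega_h^2+e_h$ with $e_h\geq 0$ only yields the lower bound $12\chi_h\geq\omega_h^2$, which points the wrong way for absorbing a negative multiple of $\chi_h$; and the slope inequality for $h$ is vacuous, since a relatively minimal elliptic fibration has $\omega_h^2=0$ while $\chi_h$ can be arbitrarily large. After the coefficient bookkeeping one is left with \eqref{eqnpf-4(g-1)/(g-gamma)-1}, namely $\omega_f^2-4\chi_f\geq\frac{1}{2(g-1)}\big(T-16(g-1)\chi_h\big)$, and the crude bound $T\geq 0$ does not suffice. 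The missing ingredient is the canonical bundle formula for elliptic fibrations \eqref{eqnpf-4(g-1)/(g-gamma)-2}, which gives $\omega_h\equiv\big(\chi_h+\sum\frac{l_i-1}{l_i}\big)\Gamma$ and hence, by \eqref{eqnhurwitz-formula}, $T=2(g-1)\,\omega_h\cdot R\geq 4(g-1)^2\chi_h$; only with this does one obtain $\omega_f^2-4\chi_f\geq 2(g-5)\chi_h\geq 0$. You need to add this case explicitly (and, of course, carry out the routine but nontrivial verification that the $s_{2k+1}$ and $s_{2k}$ coefficients are nonnegative, which your plan defers).
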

\begin{proof}
By \autoref{thminvariants-double-fibration}, for any $\lambda$, one has
\begin{eqnarray}
&&~(2g+1-3\gamma)(\omega_f^2-\lambda\cdot\chi_f)\label{eqnomega_f^2-lambda-chi_f}\\[0.15cm]
&=\hspace{-0.2cm}&\left(\frac{(3g+1-4\gamma)(g-1)}{2}-\frac{(g+1-2\gamma)^2\lambda}{8}\right)\cdot\frac{\omega_h^2}{\gamma-1}-2(2g+1-3\gamma)\lambda\cdot\chi_h\qquad\nonumber\\
&&\hspace{-0.2cm}+\frac{12-\lambda}{8}\cdot T+\frac{4(g-1)-(g-\gamma)\lambda}{4}\cdot s_2+\frac{(2g+1-3\gamma)\lambda}{4}\cdot n_2\nonumber\\
&&\hspace{-0.2cm}+\sum_{k\geq 1} \Big((12-\lambda)k\big((g-1)+(k-1)(\gamma-1)\big)-(2g+1-3\gamma)\Big)\cdot s_{2k+1}\nonumber\\
&&\hspace{-0.2cm}+\sum_{k\geq 2}\left(\frac{(12-\lambda)k\big((g-1)+(k-2)(\gamma-1)\big)}{2}-2(2g+1-3\gamma)\right)\cdot s_{2k}.\nonumber
\end{eqnarray}
Taking $\lambda=\frac{4(g-1)}{g-\gamma}$ in \eqref{eqnomega_f^2-lambda-chi_f}, it is easy to see that the coefficients of $n_2$ and $s_j$'s for $j\geq 3$  are all non-negative due to \eqref{eqnhurwitz-formula}.
Since $T$, $n_2$ and $s_j$'s for $j\geq 3$ are also all non-negative by definition, it follows from \eqref{eqnomega_f^2-lambda-chi_f} that
\begin{equation}\label{eqnpf-4(g-1)/(g-gamma)-1}
\omega_f^2-\frac{4(g-1)}{g-\gamma}\cdot\chi_f\geq \frac{1}{2(g-\gamma)}\left((g-1)^2\cdot\frac{\omega_h^2}{\gamma-1}+T-16(g-1)\cdot\chi_h\right).
\end{equation}

If $h$ is locally trivial, then $\frac{\omega_h^2}{\gamma-1}=\chi_h=0$ and $T\geq 0$, from which together with \eqref{eqnpf-4(g-1)/(g-gamma)-1}
the inequality \eqref{eqn4(g-1)/(g-gamma)} follows immediately.

If $g\geq 4\gamma+1$ and $\gamma=1$, then by \cite[\S\,V-Theorem\,12.1]{bhpv-04}, one has
\begin{equation}\label{eqnpf-4(g-1)/(g-gamma)-2}
\omega_h \sim_{\text{(numerically equivalent)}} \left(\chi_h+\sum_{i=1}^{n}\frac{l_i-1}{l_i}\right)\Gamma,
\end{equation}
where $\Gamma$ is a general fiber of $h$ and $\{\Gamma_i\}_{i=1,\cdots,n}$ are the union of multiple fibers of $h$ with multiplicities $\{l_i\}_{i=1,\cdots,n}$.
Hence $T=2(g-1)\omega_h\cdot R\geq 4(g-1)^2\chi_h$. Therefore, it follows from \eqref{eqnpf-4(g-1)/(g-gamma)-1} that
$\omega_f^2-4\chi_f\geq 2(g-5)\chi_h\geq 0$.

If $g\geq 4\gamma+1$ and $\gamma>1$, then one has $\omega_h^2\geq \frac{4(\gamma-1)}{\gamma}\cdot\chi_h\geq 0$ and $T\geq 0$.
Hence by \eqref{eqnpf-4(g-1)/(g-gamma)-1}, we get
\[\omega_f^2-\frac{4(g-1)}{g-\gamma}\cdot\chi_f\geq \frac{4(g-1)(g-4\gamma-1)}{2(g-\gamma)\gamma}\cdot\chi_h\geq 0 \,\text{~\,as required.}\qedhere\]
\end{proof}


When $f$ is an irregular double cover, we have the following better bounds, which is a generalization
of \cite[Theorem\,1.4]{lu-zuo-13}.
\begin{theorem}\label{thm-irreg-doub}
Let $f$ be an irregular double cover fibration of type $(g,\gamma)$, and
\begin{equation}
F(g,\gamma,\ell)=(g-1)^2-4(g-1)(\gamma \ell+\gamma+\ell)-4\ell^2(\gamma^2-1).
\end{equation}

{\rm(i)} If $h$ is locally trivial or $F(g,\gamma,1)\geq 0$, then
\begin{equation}\label{eqn6+4(gamma-1)/(g-1)}
\lambda_f\geq 6+\frac{4(\gamma-1)}{g-1}.
\end{equation}

{\rm(ii)} Assume moreover that $J_0(\wt X)$ is a curve, where $J_0$ is defined in \eqref{eqndef-of-J_0}.
If $h$ is locally trivial or $F(g,\gamma,q_{\pi})\geq 0$, then
\begin{equation}\label{eqn-lambda_f>lambda_g,q_pi}
\lambda_f\geq \lambda_{g,\gamma,q_{\pi}}:=8-\frac{4(g+1-2\gamma)}{(q_{\pi}+1)\big((g-1)+(q_{\pi}-1)(\gamma-1)\big)}.
\end{equation}
\end{theorem}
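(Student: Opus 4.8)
The plan is to run the linear-programming argument from the proof of \autoref{thm-double-1} at the sharper slope value, using the master identity \eqref{eqnomega_f^2-lambda-chi_f} to write $(2g+1-3\gamma)(\omega_f^2-\lambda\chi_f)$ as an explicit linear combination of $\frac{\omega_h^2}{\gamma-1}$, $\chi_h$, $T$, $s_2$, $n_2$ and the higher singularity indices $s_{2k+1}$, $s_{2k}$, and then to show that this combination is non-negative. For the basic slope $\frac{4(g-1)}{g-\gamma}$ all the coefficients of $n_2$ and of the $s_j$ with $j\geq 3$ are already non-negative; the point here is that, since our target $\lambda$ exceeds $\frac{4(g-1)}{g-\gamma}$, the coefficient $\frac{4(g-1)-(g-\gamma)\lambda}{4}$ of $s_2$ (and, in part (ii), the coefficients of finitely many low-index $s_{2k+1}$, $s_{2k}$) turn negative. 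These negative contributions are exactly what the restriction inequalities of \autoref{prop-restriction-invariants} are designed to absorb.

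For part (i) I would set $\lambda=6+\frac{4(\gamma-1)}{g-1}$ in \eqref{eqnomega_f^2-lambda-chi_f} and add to the resulting identity a suitable positive multiple of the irregularity inequality \eqref{eqnq_pi>0} (valid because $q_{\pi}>0$), chosen so that the negative $s_2$-term is cancelled. After this substitution the coefficient of $T$ remains non-negative, the coefficients of $n_2$ and of every $s_{2k+1}$, $s_{2k}$ ($k\geq 1$) are non-negative by a direct check, and the only surviving difficulty is the coefficient of $\frac{\omega_h^2}{\gamma-1}$ together with the $\chi_h$-term, the latter carrying the negative coefficient $-2(2g+1-3\gamma)\lambda$. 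If $h$ is locally trivial these both vanish (with $T\geq 0$) and we are done; otherwise I would feed in the slope inequality for the quotient fibration, $\frac{\omega_h^2}{\gamma-1}\geq\frac{4}{\gamma}\chi_h$ for $\gamma>1$ (and its $\gamma=1$ analogue \eqref{eqnpf-4(g-1)/(g-gamma)-2}), to convert the $\frac{\omega_h^2}{\gamma-1}$-term into a multiple of $\chi_h$. The residual coefficient of $\chi_h$ is then non-negative precisely when $F(g,\gamma,1)\geq 0$, which is the stated hypothesis.

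Part (ii) is the same mechanism with a stronger input. Taking $\lambda=\lambda_{g,\gamma,q_{\pi}}$, which is larger still, forces the coefficients of $s_2,\,s_3,\dots,s_{2q_{\pi}-1}$ and of $s_4,\dots,s_{2q_{\pi}}$ to become negative simultaneously; these are precisely the indices appearing on the left-hand side of \eqref{eqnq_pi>g'}. Under the hypothesis that $J_0(\wt X)$ is a curve I would apply \autoref{prop-restriction-invariants}(ii) with $g'=q_{\pi}$ (legitimate since the geometric genus of $J_0(\wt X)$ is at least $q_{\pi}=\dim\Alb_0(\wt X)$ and the estimate \eqref{eqnq_pi>g'} only strengthens as $g'$ grows) to absorb all of these offending terms at once. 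As in part (i), what then remains is the coefficient of $\frac{\omega_h^2}{\gamma-1}$ together with the $\chi_h$-term; using the slope inequality for $h$ reduces positivity to the single numerical condition $F(g,\gamma,q_{\pi})\geq 0$, with the locally trivial case again immediate.

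The main obstacle is the coefficient bookkeeping in the two middle steps: after adding the chosen multiple of \eqref{eqnq_pi>0} or \eqref{eqnq_pi>g'} one must verify that all the (infinitely many) modified coefficients of $s_{2k+1}$ and $s_{2k}$ stay non-negative, which rests on the explicit form of $\bar a_k$, $\bar b_k$ from \autoref{thminvariants-double-fibration} and on the growth of these weights in $k$, and one must match the exact threshold on the $\chi_h$-coefficient with the polynomial $F$. I expect the cancellation of $s_2$ to be forced, so the real work is checking the higher-index coefficients and pinning down that the $\chi_h$-threshold is exactly $F(g,\gamma,q_{\pi})\geq 0$; the subsidiary point that the relevant genus may be taken to be $q_{\pi}$ in part (ii) also needs care.
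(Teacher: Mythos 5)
Your plan coincides with the paper's proof: the authors take $\lambda=\lambda_{g,\gamma,q_{\pi}}$ in the master identity \eqref{eqnomega_f^2-lambda-chi_f}, add the restriction inequality \eqref{eqnq_pi>g'} of \autoref{prop-restriction-invariants} (with the geometric genus of $J_0(\wt X)$ bounded below by $q_{\pi}$) to absorb the negative coefficients of $s_2$ and the low-index $s_j$, verify the remaining coefficients $\xi_k,\eta_k,\mu_k,\nu_k$ are non-negative, and then reduce the surviving $\frac{\omega_h^2}{\gamma-1}$, $T$, $\chi_h$ terms to the condition $F(g,\gamma,q_{\pi})\geq 0$ via the slope inequality for $h$ (or \eqref{eqnpf-4(g-1)/(g-gamma)-2} when $\gamma=1$), with part (i) obtained identically from \eqref{eqnq_pi>0}. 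The coefficient bookkeeping you flag as the remaining work is exactly what the paper carries out, and it closes as you predict.
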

\begin{proof}
We only prove (ii) here, for the proof of (i) is completely the same except replacing the usage of \eqref{eqnq_pi>g'} by \eqref{eqnq_pi>0} in the following.

Note that $J_0(\wt X)$ generates $\Alb_0(\wt X)$ by construction.
Hence the geometric genus of $J_0(\wt X)$ is at least $q_{\pi}=\dim \Alb_0(\wt X)$.
Note also that $\lambda_{g,\gamma,q_{\pi}}\geq \frac{4(g-1)}{g-\gamma}$, since $g+1-2\gamma\geq 0$ by \eqref{eqnhurwitz-formula}.
Hence by \eqref{eqnq_pi>g'} and \eqref{eqnomega_f^2-lambda-chi_f} with $\lambda=\lambda_{g,\gamma,q_{\pi}}$, we obtain
\begin{eqnarray}
\hspace{-0.2cm}&&\omega_f^2-\lambda_{g,\gamma,q_{\pi}}\cdot\chi_f\label{eqn-pf-lambda_f>lambda_g,q_pi-1}\\
\hspace{-0.2cm}&\geq\hspace{-0.1cm}&\frac{8(g-1)-(g+1-2\gamma)\lambda_{g,\gamma,q_{\pi}}}{8}\cdot \frac{\omega_h^2}{\gamma-1}-2\lambda_{g,\gamma,q_{\pi}}\cdot\chi_h+\frac{8-\lambda_{g,\gamma,q_{\pi}}}{8(g+1-2\gamma)}\cdot T\quad\nonumber\\
\hspace{-0.2cm}&&\hspace{-0.2cm}+\frac{\lambda_{g,\gamma,q_{\pi}}}{4}\cdot n_2+\sum_{k=1}^{q_{\pi}-1} \xi_k\cdot s_{2k+1}+\sum_{k=2}^{q_{\pi}}\eta_k\cdot s_{2k}
                 +\sum_{k\geq q_{\pi}} \mu_k\cdot s_{2k+1}+\sum_{k\geq q_{\pi}+1}\nu_k\cdot s_{2k},\nonumber
\end{eqnarray}
where
$$\begin{aligned}
\xi_k&\,=\,k^2\lambda_{g,\gamma,q_{\pi}}-(2k-1)^2,\\
\eta_k&\,=\,\frac{(k-1)\big(k\lambda_{g,\gamma,q_{\pi}}-4(k-1)\big)}{2},\\
\mu_k&\,=\,\frac{\big(4k(g-1)+(2k-1)^2(\gamma-1)\big)(8-\lambda_{g,\gamma,q_{\pi}})-(g+1-2\gamma)\lambda_{g,\gamma,q_{\pi}}}{4(g+1-2\gamma)},\\
\nu_k&\,=\,\frac{k\big((g-1)+(k-2)(\gamma-1)\big)(8-\lambda_{g,\gamma,q_{\pi}})-4(g+1-2\gamma)}{2(g+1-2\gamma)}.
\end{aligned}$$
It is easy to see that $\xi_k\geq 0$ for any $1\leq k\leq q_{\pi}-1$, $\eta_k\geq 0$ for any $2\leq k\leq q_{\pi}$, and
$$\begin{aligned}
\mu_k&\geq \mu_{q_{\pi}}=\frac{2(q_{\pi}-1)}{q_{\pi}+1}+\frac{g-\gamma}{(q_{\pi}+1)\big((g-1)+(q_{\pi}-1)(\gamma-1)\big)}\geq0,&~&\forall~k\geq q_{\pi},\\
\nu_k&\geq \nu_{q_{\pi}+1}=0,&&\forall~k\geq q_{\pi}+1.
\end{aligned}$$
Hence by \eqref{eqn-pf-lambda_f>lambda_g,q_pi-1}, one has
\begin{eqnarray}
\hspace{-0.2cm}&&\omega_f^2-\lambda_{g,\gamma,q_{\pi}}\cdot\chi_f\label{eqn-pf-lambda_f>lambda_g,q_pi-2}\\
\hspace{-0.2cm}&\geq\hspace{-0.1cm}&\frac{8(g-1)-(g+1-2\gamma)\lambda_{g,\gamma,q_{\pi}}}{8}\cdot \frac{\omega_h^2}{\gamma-1}-2\lambda_{g,\gamma,q_{\pi}}\cdot\chi_h+\frac{8-\lambda_{g,\gamma,q_{\pi}}}{8(g+1-2\gamma)}\cdot T\quad\nonumber
\end{eqnarray}

If $h$ is locally trivial, then $\frac{\omega_h^2}{\gamma-1}=\chi_h=0$ and $T\geq 0$. Hence \eqref{eqn-lambda_f>lambda_g,q_pi} is clearly true.

If $F(g,\gamma,q_{\pi})\geq 0$ and $\gamma=1$, then by \eqref{eqnpf-4(g-1)/(g-gamma)-2} one has $T=2(g-1)\omega_h\cdot R\geq 4(g-1)^2\chi_h$.
Hence it follows from \eqref{eqn-pf-lambda_f>lambda_g,q_pi-2} that
$$\omega_f^2-\lambda_{g,1,q_{\pi}}\cdot\chi_f\geq \frac{2(g-8q_{\pi}-5)}{q_{\pi}+1}\cdot \chi_h.$$
Note that the assumption $F(g,\gamma,q_{\pi})\geq 0$ implies that $g\geq 8q_{\pi}+5$ when $\gamma=1$.
Thus the above inequality implies that \eqref{eqn-lambda_f>lambda_g,q_pi} holds if $\gamma=1$.

Finally, we consider the case when $F(g,\gamma,q_{\pi})\geq 0$ and $\gamma>1$.
In this case one has $\omega_h^2\geq \frac{4(\gamma-1)}{\gamma}\cdot\chi_h\geq 0$ and $T\geq 0$.
Hence by \eqref{eqn-pf-lambda_f>lambda_g,q_pi-2}, we get
\[\omega_f^2-\lambda_{g,\gamma,q_{\pi}}\cdot\chi_f\geq \frac{2F(g,\gamma,q_{\pi})}{\gamma(q_{\pi}+1)\big((g-1)+(q_{\pi}-1)(\gamma-1)\big)}\cdot\chi_h \geq 0.\qedhere\]
\end{proof}

\begin{remark}
Let $f$ be an irregular double cover fibration of type $(g,\gamma)$.
Similar to the above proof, one can show that
\begin{equation}\label{eqn-slope>6}
\lambda_f \geq 6, \qquad \text{if~} g\geq 6\gamma+7.
\end{equation}
In fact, by \eqref{eqnq_pi>0} with \eqref{eqnomega_f^2-lambda-chi_f}, one obtains that
$$\begin{aligned}
\omega_f^2-6\chi_f &\,\geq \frac{8(g-1)-6(g+1-2\gamma)}{8}\cdot \frac{\omega_h^2}{\gamma-1}-12\chi_h+\frac{1}{4(g+1-2\gamma)}\cdot T\\
&\,\geq
\left\{\begin{aligned}
&-12\chi_h+\frac{1}{4(g-1)}\cdot 4(g-1)^2\chi_h\geq 0,&&\text{if~}\gamma=1,\\[1.5mm]
&\frac{8(g-1)-6(g+1-2\gamma)}{8}\cdot 4\chi_h-12\chi_h\geq 0,&\quad&\text{if~}\gamma\geq 2.
\end{aligned}\right.
\end{aligned}$$
\end{remark}

We end this section with the following lower bound on the slope of double cover fibrations of type $(g,\gamma)$
with $g$ being not big.
It can be viewed as a supplement to \autoref{thm-double-1}.
\begin{theorem}
Let $f$ be a double cover fibration of type $(g, \gamma)$. If $g\leq 4\gamma+1$ and $(g+1-2\gamma)^2\geq 2(2g+1-3\gamma)$, then
\begin{equation}\label{eqn-4.3-2}
\lambda_f\geq \frac{4(g-1)(3g+1-4\gamma)}{(g+1-2\gamma)^2+4\gamma(2g+1-3\gamma)}.
\end{equation}
\end{theorem}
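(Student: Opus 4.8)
The plan is to run the same machinery used for \autoref{thm-double-1}, feeding the master identity \eqref{eqnomega_f^2-lambda-chi_f} with the specific value $\lambda=\frac{4(g-1)(3g+1-4\gamma)}{(g+1-2\gamma)^2+4\gamma(2g+1-3\gamma)}$ and proving that the whole right-hand side is non-negative (equivalently $\omega_f^2-\lambda\chi_f\ge0$, since $2g+1-3\gamma>0$). Writing $D=(g+1-2\gamma)^2+4\gamma(2g+1-3\gamma)$, one first records two orienting facts: at the boundary $g=4\gamma+1$ this $\lambda$ equals $\frac{4(g-1)}{g-\gamma}$, so the statement degenerates to \autoref{thm-double-1}; and for $g<4\gamma+1$ one has $\lambda<\frac{4(g-1)}{g-\gamma}$, so---unlike in \autoref{thm-double-1}---the coefficient $\frac{4(g-1)-(g-\gamma)\lambda}{4}$ of $s_2$ is strictly positive and must be handled separately.

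First I would dispose of the ``quotient part''. A short computation gives the coefficient of $\frac{\omega_h^2}{\gamma-1}$ as $A=\frac{(3g+1-4\gamma)(g-1)}{2}\cdot\frac{4\gamma(2g+1-3\gamma)}{D}$, which is $\ge0$ because $g\ge2\gamma-1$ by \eqref{eqnhurwitz-formula}; moreover $\lambda$ is exactly the value for which $\frac{4A}{\gamma}=2(2g+1-3\gamma)\lambda$. Hence, for $\gamma\ge2$, the slope inequality $\omega_h^2\ge\frac{4(\gamma-1)}{\gamma}\chi_h$ yields $A\frac{\omega_h^2}{\gamma-1}-2(2g+1-3\gamma)\lambda\chi_h\ge0$, while for $\gamma=1$ the same conclusion follows from \eqref{eqnpf-4(g-1)/(g-gamma)-2}, which gives $T\ge4(g-1)^2\chi_h$. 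Since $\lambda\le12$, the coefficients of $T$ and $n_2$ are non-negative, and the Castelnuovo bound together with $g\le4\gamma+1$ makes the coefficients of all $s_{2k+1}\,(k\ge1)$ and $s_{2k}\,(k\ge2)$ non-negative as well.

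The hard part is the leftover $c_2s_2$ with $c_2=\frac{4(g-1)-(g-\gamma)\lambda}{4}>0$, since $s_2$ may be negative. The plan is to substitute \eqref{eqn(omegah+R)R}, rewriting $s_2$ through $(\omega_h+R)\cdot R$ and absorbing the resulting multiples of $n_2$ and of the higher indices (one checks the latter keep non-negative coefficients); this reduces the problem to controlling $c_2(\omega_h\cdot R+R^2)$, where $\omega_h\cdot R\ge0$ and only $c_2R^2$ is dangerous. Here the hypothesis is decisive: the identity $(g-1)(3g+1-4\gamma)-D=2\big((g+1-2\gamma)^2-2(2g+1-3\gamma)\big)$ shows that $(g+1-2\gamma)^2\ge2(2g+1-3\gamma)$ is \emph{equivalent} to $\lambda\ge4$, and $\lambda\ge4$ is precisely what forces the coefficient of $R^2$---after merging with the $R^2$-part of $\frac{12-\lambda}{8}T$, which equals $c_2-\frac{(12-\lambda)(\gamma-1)}{8}$---to be $\le0$. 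One may then invoke the Hodge index inequality $\big((g+1-2\gamma)\omega_h-(\gamma-1)R\big)^2\le0$ underlying $T\ge0$ to bound $R^2$ from above in the correct direction. I expect the main obstacle to be exactly this final bookkeeping: combining the Hodge-index bound on $R^2$ with the remaining slope surplus (and, if needed, with $\chi_f\ge0$) so that every contribution collapses into a manifestly non-negative total.
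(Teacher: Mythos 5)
Your overall strategy coincides with the paper's: substitute $\lambda_0=\frac{4(g-1)(3g+1-4\gamma)}{(g+1-2\gamma)^2+4\gamma(2g+1-3\gamma)}$ into \eqref{eqnomega_f^2-lambda-chi_f}, observe that the hypothesis $(g+1-2\gamma)^2\geq 2(2g+1-3\gamma)$ is exactly $\lambda_0\geq 4$ while $g\leq 4\gamma+1$ gives $\lambda_0\leq\frac{4(g-1)}{g-\gamma}$, check that all coefficients except that of $s_2$ are harmless, and note that the $\frac{\omega_h^2}{\gamma-1}$ and $\chi_h$ terms cancel exactly against the slope inequality $\omega_h^2\geq\frac{4(\gamma-1)}{\gamma}\chi_h$. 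All of that is correct and matches the paper (which disposes of $\gamma=1$ even faster, since the hypotheses then force $g=5$, $\lambda_0=4$, reducing to \autoref{thm-double-1}).

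The genuine gap is in your treatment of the $s_2$ term, which is the whole point of the theorem. After substituting \eqref{eqn(omegah+R)R} and expanding $T$, the quantity to be controlled is
\begin{equation*}
E=2(2g+1-3\gamma)\,\omega_h^2-2(2g+1-3\gamma)\lambda_0\chi_h+\Big[\tfrac{(12-\lambda_0)(g+1-2\gamma)}{4}+c_2\Big]\,\omega_h\cdot R+\tfrac{(4-\lambda_0)(2g+1-3\gamma)}{8}\,R^2
\end{equation*}
(plus non-negative multiples of $n_2$ and the $s_j$). Your two tools are $\omega_h\cdot R\geq 0$ and the Hodge-index bound $\big((g+1-2\gamma)\omega_h-(\gamma-1)R\big)^2\leq 0$; eliminating $R^2$ with the latter (the coefficient of $R^2$ is indeed $\leq 0$ when $\lambda_0\geq 4$, as you computed) leaves an $\omega_h^2$-coefficient of $2(2g+1-3\gamma)\big(1+\tfrac{(\lambda_0-4)(g+1-2\gamma)^2}{16(\gamma-1)^2}\big)$, which must dominate $2(2g+1-3\gamma)\lambda_0\chi_h$ via $\omega_h^2\geq\frac{4(\gamma-1)}{\gamma}\chi_h$ alone. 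This fails: for $(g,\gamma)=(8,2)$ one has $\lambda_0=\frac{476}{113}\approx 4.21$, and the resulting bound is roughly $5.33\,\chi_h$ against the required $2\lambda_0\chi_h\approx 8.43\,\chi_h$. The missing input is a lower bound on $\omega_h\cdot R$ in terms of $\omega_h^2$, not merely $\omega_h\cdot R\geq 0$. This is precisely the content of the paper's \autoref{lemma-4-1}, which (for $\gamma>1$) is equivalent to $(\gamma-1)(2g+1-3\gamma)\,\omega_h\cdot R\geq(g+1-2\gamma)^2\,\omega_h^2$, and which does \emph{not} follow from the global Hodge-index inequality on $R$: its proof requires decomposing $R$ into connected components, showing via adjunction that $(\omega_h+D_i)\cdot D_i\geq 0$ for the horizontal components and $\geq -2$ for the vertical ones, and applying the Hodge index theorem to $(g+1-2\gamma)\omega_h-(\gamma-1)D$ with $D$ the horizontal part only. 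Without this lemma (or an equivalent), your bookkeeping cannot close.
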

\begin{proof}
Let $\lambda_0:=\frac{4(g-1)(3g+1-4\gamma)}{(g+1-2\gamma)^2+4\gamma(2g+1-3\gamma)}$.
Then $4\leq \lambda_0 \leq \frac{4(g-1)}{g-\gamma}$ by assumptions.

If $\gamma=1$, then the assumptions imply that $\lambda_0=4$ and $g=5$.
Hence \eqref{eqn-4.3-2} follows from \eqref{eqn4(g-1)/(g-gamma)}.
If $\gamma>1$, taking $\lambda=\lambda_0$ in \eqref{eqnomega_f^2-lambda-chi_f} and using \autoref{lemma-4-1}
below to eliminate $s_2$, one obtains
\begin{eqnarray*}
&&\omega_f^2-\lambda_0\cdot\chi_f\\[0.15cm]
&\geq \hspace{-0.2cm}&\left(\frac{(3g+1-4\gamma)(g-1)}{2(2g+1-3\gamma)}-\frac{(g+1-2\gamma)^2\lambda_0}{8(2g+1-3\gamma)}\right)
\cdot\frac{\omega_h^2}{\gamma-1}-2\lambda_0\cdot\chi_h+\frac{(\lambda_0-4)}{8(\gamma-1)}\cdot T\\
&&\hspace{-0.2cm}+\frac{\lambda_0}{4}\cdot n_2
+\sum_{k\geq 1} \big(k^2\lambda_0-(2k-1)^2\big)\cdot s_{2k+1}
+\sum_{k\geq 2}\Big(\frac{k(k-1)}{2}\lambda_0-2(k-1)^2\Big)\cdot s_{2k}\\
&\geq \hspace{-0.2cm}&\left(\frac{(3g+1-4\gamma)(g-1)}{2(2g+1-3\gamma)}-\frac{(g+1-2\gamma)^2\lambda_0}{8(2g+1-3\gamma)}\right)
\cdot\frac{\omega_h^2}{\gamma-1}-2\lambda_0\cdot\chi_h\\
&\geq \hspace{-0.2cm}&\Bigg(\left(\frac{(3g+1-4\gamma)(g-1)}{2(2g+1-3\gamma)}-\frac{(g+1-2\gamma)^2\lambda_0}{8(2g+1-3\gamma)}\right)
\cdot\frac{4}{\gamma}-2\lambda_0\Bigg)\cdot\chi_h=0,
\end{eqnarray*}
where the second inequality follows from the non-negativity of $T,\,n_2$ and $s_j$'s for $j\geq 3$;
and the third inequality comes comes from the slope inequality $\omega_h^2\geq \frac{4(\gamma-1)}{\gamma}\chi_h$ of the fibration $h$.
The proof is complete.
\end{proof}

\begin{lemma}\label{lemma-4-1}
\begin{equation}\label{eqn-4.3-1}
T+(\gamma-1)\left(s_2+\sum_{k\geq1}4k(2k+1)s_{2k+1}+\sum_{k\geq 2}2k(2k-1)s_{2k}\right)\geq 0.
\end{equation}
\end{lemma}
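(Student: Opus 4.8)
The plan is to reduce \eqref{eqn-4.3-1} to a single intersection inequality on $Y$ and then prove it by restricting to the horizontal part of the branch divisor. First I would use \eqref{eqn(omegah+R)R} to replace the bracketed singularity sum by $(\omega_h+R)\cdot R+2n_2$, and then substitute the expression for $T$ from \autoref{thminvariants-double-fibration}. The two terms carrying $n_2$ cancel, and for $\gamma\geq 2$ the left-hand side of \eqref{eqn-4.3-1} becomes
$$-\frac{\big((g+1-2\gamma)\omega_h-(\gamma-1)R\big)^2}{\gamma-1}+(\gamma-1)(\omega_h+R)\cdot R=(2g+1-3\gamma)\,\omega_h\cdot R-\frac{(g+1-2\gamma)^2}{\gamma-1}\,\omega_h^2,$$
the last equality being a direct expansion in which the $R^2$ contributions cancel. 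When $\gamma=1$ the singularity sum is weighted by $\gamma-1=0$ while $T=2(g-1)\,\omega_h\cdot R\geq0$, so the statement is immediate; hence I may assume $\gamma\geq2$ and it remains to prove $(2g+1-3\gamma)\,\omega_h\cdot R\geq\frac{(g+1-2\gamma)^2}{\gamma-1}\,\omega_h^2$.

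The crucial idea is not to work with $R$ itself but with its horizontal part $R_h$ (the sum of the components dominating $B$). Since $\omega_h$ is nef and the vertical part of $R$ is effective, $\omega_h\cdot R\geq\omega_h\cdot R_h$, so it suffices to prove the inequality with $R$ replaced by $R_h$. For this I would combine two inputs. The first is the algebraic index theorem: by the Hurwitz relation \eqref{eqnhurwitz-formula} one has $R\cdot\Gamma=2(g+1-2\gamma)$, and the vertical part does not meet a general fibre, so $R_h\cdot\Gamma=2(g+1-2\gamma)$ as well; since $\omega_h\cdot\Gamma=2\gamma-2$ this gives $\big((g+1-2\gamma)\omega_h-(\gamma-1)R_h\big)\cdot\Gamma=0$. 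A class orthogonal to a general fibre has non-positive self-intersection, whence
$$(\gamma-1)^2R_h^2\leq 2(g+1-2\gamma)(\gamma-1)\,\omega_h\cdot R_h-(g+1-2\gamma)^2\,\omega_h^2.$$
The second input is $(\omega_h+R_h)\cdot R_h\geq0$, i.e. $R_h^2\geq-\omega_h\cdot R_h$.

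Feeding $R_h^2\geq-\omega_h\cdot R_h$ into the index inequality makes the $R_h^2$-term drop out and yields at once
$$(g+1-2\gamma)^2\,\omega_h^2\leq(\gamma-1)(2g+1-3\gamma)\,\omega_h\cdot R_h,$$
which is the required inequality for $R_h$, hence for $R$. The main work, and the place where the geometry really enters, is the second input $(\omega_h+R_h)\cdot R_h\geq0$: writing $R_h=\sum_iC_i$ as a sum of irreducible horizontal curves, I would prove $(\omega_h+C_i)\cdot C_i\geq0$ for each $i$ by adjunction on $Y$ together with the Hurwitz inequality $2p_a(C_i)-2\geq 2g(C_i)-2\geq (C_i\cdot\Gamma)(2g(B)-2)$ for the dominant map $C_i\to B$, and then add the non-negative cross terms $C_i\cdot C_j$. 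The reason one must pass to $R_h$ is precisely that the analogous quantity $(\omega_h+R)\cdot R$ for the full branch divisor can be negative --- each vertical rational component contributes $-2$ by adjunction --- so the naive estimate fails, and isolating the horizontal part (whose components genuinely dominate $B$) is what rescues the argument.
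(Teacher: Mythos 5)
Your proof is correct and follows essentially the same route as the paper's: both hinge on the Hodge index theorem applied to the $\Gamma$-orthogonal class $(g+1-2\gamma)\omega_h-(\gamma-1)\cdot(\text{horizontal part of }R)$ together with the adjunction/Hurwitz estimate that the horizontal part $P$ of the branch divisor satisfies $(\omega_h+P)\cdot P\geq 0$. Your preliminary algebraic reduction to $(2g+1-3\gamma)\,\omega_h\cdot R\geq\frac{(g+1-2\gamma)^2}{\gamma-1}\,\omega_h^2$, which lets you discard the vertical components at the outset by nefness of $\omega_h$, is a slightly cleaner packaging of the paper's explicit bookkeeping with the vertical part $D'$ of its connected-component decomposition.
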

\begin{proof}
We may assume that $\gamma>1$. By \eqref{eqn(omegah+R)R}, the inequality \eqref{eqn-4.3-1} is equivalent to
\begin{equation}\label{eqn-4.3-lin2}
T+(\gamma-1)\big((\omega_h+R)\cdot R+2n_2\big)\geq 0.
\end{equation}

Let $R=\sum\limits_{i=1}^{m}D_i$ be the decomposition into connected components, such that
$$D_i\cdot \Gamma> 0, \quad\forall~1\leq i\leq l;~\qquad\, D_i\cdot \Gamma= 0, \quad\forall~l+1\leq i\leq m,$$
where $\Gamma$ is a general fiber of $h$. We claim that
\begin{equation}\label{eqn-4.3-lin3}
(\omega_h+D_i)\cdot D_i\geq 0, ~\,\forall~1\leq i\leq l;\quad (\omega_h+D_i)\cdot D_i\geq -2, ~\,\forall~l+1\leq i\leq m.
\end{equation}
Indeed, let $\wt D_i=\sum\limits_{j=1}^{k_i}\wt D_{ij} \to D_i$ be the normalization, and $\sum\limits_{j=1}^{l_i}\wt D_{ij}$ be the irreducible components
which are mapped surjectively onto $B$. Then
$$\begin{aligned}
(\omega_h+D_i)\cdot D_i&~= \big(2g(B)-2\big)\Gamma\cdot D_i+(\omega_Y+D_i)\cdot D_i\\
&~\geq \big(2g(B)-2\big)\Gamma\cdot D_i+\sum_{j=1}^{k_i}\big(2g(\wt D_{ij})-2\big)+2(k_i-1)\\
&~\geq \sum_{j=l_i+1}^{k_i}\big(2g(\wt D_{ij})-2\big)+2(k_i-1) \geq 2(k_i-l_i-1).
\end{aligned}$$
Hence \eqref{eqn-4.3-lin3} follows. Let $D=\sum\limits_{i=1}^{l}D_i$ and $D'=\sum\limits_{i=l+1}^{m}D_i$. Then $(\omega_h+D)\cdot D\geq 0$ by \eqref{eqn-4.3-lin3}.
Since $\Gamma\cdot \big((g+1-2\gamma)\omega_h-(\gamma-1)D\big)=0$, one gets by Hodge index theorem that
$$\begin{aligned}
0&~\geq \big((g+1-2\gamma)\omega_h-(\gamma-1)D\big)^2\\
&~=\big((g+1-2\gamma)\omega_h-(\gamma-1)R\big)^2-(\gamma-1)^2(\omega_h+D')\cdot D'\\
&~\quad+(\gamma-1)(2g+1-3\gamma)\omega_h\cdot D'\\
&~\geq \big((g+1-2\gamma)\omega_h-(\gamma-1)R\big)^2-(\gamma-1)^2(\omega_h+D')\cdot D'.
\end{aligned}$$
Combining this with the fact that
$$(\omega_h+R)\cdot R=(\omega_h+D)\cdot D+(\omega_h+D')\cdot D'\geq (\omega_h+D')\cdot D',$$
we obtain \eqref{eqn-4.3-lin2}, and hence complete the proof.
\end{proof}

\subsection{Proof of \autoref{prop-2-21}}\label{sec-pf-prop-2-21}
	By \autoref{thm-double-1}, we may assume that $q_{f}\geq 2\gamma$.
	Together with \cite[Theorem\,3]{xiao-87a}, we may assume that $q_f\geq \max\{3,2\gamma\}$.
	Note that $q_h\leq \gamma$.	In particular $q_{\pi}=q_f-q_h>0$.
	If $\gamma=1$, then it follows clearly that $q_{f}\geq 3>\gamma+1$;
	if $h$ is locally trivial, then by \autoref{thm-irreg-doub}\,(i),
	we may assume that $q_f\geq \frac{2g+4}{3}>\gamma+1$;
	if $\lambda_h> \frac{4(\gamma-1)}{\gamma-q_h/2}$, then by \autoref{thm-double-1} and its proof, one obtains
	$\lambda_f> \frac{4(g-1)}{g-\gamma}$, from which we may also assume that $q_{f}>\gamma+1$.
	Thus we assume that $q_{\pi}>\gamma+1$ in the rest part of the proof.
	
	By \autoref{lem-cai} together with \eqref{eqnq_pi>g'} and \eqref{eqnomega_f^2-lambda-chi_f}
	for $\lambda=\lambda_0=\frac{4(g-1)}{g-q_f/2}$, we obtain
	\begin{eqnarray}
	&&\omega_f^2-\lambda_0\cdot\chi_f\nonumber\\
	&\geq\hspace{-0.1cm}&\frac{8(g-1)-(g+1-2\gamma)\lambda_0}{8}\cdot \frac{\omega_h^2}{\gamma-1}-2\lambda_0\cdot\chi_h+\frac{8-\lambda_0}{8(g+1-2\gamma)}\cdot T\quad\nonumber\\
	&&\hspace{-0.2cm}+\frac{\lambda_0}{4}\cdot n_2+\sum_{k=1}^{q_{\pi}-1} \xi_k\cdot s_{2k+1}+\sum_{k=2}^{q_{\pi}}\eta_k\cdot s_{2k}
	+\sum_{k\geq q_{\pi}} \mu_k\cdot s_{2k+1}+\sum_{k\geq q_{\pi}+1}\nu_k\cdot s_{2k}\nonumber\\
	&\geq\hspace{-0.1cm}&\frac{8(g-1)-(g+1-2\gamma)\lambda_0}{8}\cdot \frac{\omega_h^2}{\gamma-1}-2\lambda_0\cdot\chi_h+\frac{8-\lambda_0}{8(g+1-2\gamma)}\cdot T.\quad\nonumber
	\end{eqnarray}
	where
	$$\begin{aligned}
	\xi_k&\,=\,k^2\lambda_0-(2k-1)^2,\\
	\eta_k&\,=\,\frac{(k-1)\big(k\lambda_0-4(k-1)\big)}{2},\\
	\mu_k&\,=\,\frac{\big(4k(g-1)+(2k-1)^2(\gamma-1)\big)(8-\lambda_0)-(g+1-2\gamma)\lambda_0}{4(g+1-2\gamma)},\\
	\nu_k&\,=\,\frac{k\big((g-1)+(k-2)(\gamma-1)\big)(8-\lambda_0)-4(g+1-2\gamma)}{2(g+1-2\gamma)}.
	\end{aligned}$$
	
	If $h$ is locally trivial, then $\frac{\omega_h^2}{\gamma-1}=\chi_h=0$ and $T\geq 0$.
	Hence $\omega_f^2-\lambda_0\cdot\chi_f$.
	Moreover, if the equality holds, then the above inequality shows that all the invariants $s_i$'s, $n_2$ and $T$ are vanishing,
	which implies that $\omega_f^2=0$ by \autoref{thminvariants-double-fibration}, contradicting the non-triviality of $f$.
	Hence the strict inequality \eqref{eqn-2-22} follows.
	
	Next, we consider the case when $h$ is not locally trivial.
	By \autoref{lem-cai},
	$J_0(\wt X)\subseteq \Alb_0(\wt X)$ is a curve of genus $\gamma'\geq q_{\pi}$ since $q_{\pi}>\gamma+1$.
	Restricting $J_0$ on the general fiber of $f$, one obtains a map
	$$J_0\big|_{F}:\, F \lra J_0(\wt X).$$
	Since $f$ is not locally trivial, $\deg\big(J_0\big|_{F}\big)\geq 2$.
	If $\deg\big(J_0\big|_{F}\big)=2$, then $J_0\times f$ realizes $S$
	as a double cover of the trivial fibration $J_0(\wt X) \times B$;
	namely, $f$ is a double cover fibration whose associated quotient fibration is trivial.
	Hence by the above arguments, \eqref{eqn-2-22} holds.
	Thus $\deg\big(J_0\big|_{F}\big)\geq 3$. In particular, by the Riemann-Roch formula, one has
	$$q_{\pi}\leq \frac{g+2}{3}.$$
If $\gamma=1$, then by \eqref{eqnpf-4(g-1)/(g-gamma)-2} one has $T=2(g-1)\omega_h\cdot R\geq 4(g-1)^2\chi_h$.
Hence
$$\omega_f^2-\lambda_0\cdot\chi_f \geq \bigg(\frac{(8-\lambda_0)(g-1)}{2}-2\lambda_0\bigg)\chi_h>0.$$
If $\gamma\geq 2$, then
$$\begin{aligned}
\omega_f^2-\lambda_0\cdot\chi_f &\,\geq \bigg(\frac{8(g-1)-(g+1-2\gamma)\lambda_0}{2\gamma-q_h}-2\lambda_0\bigg)\chi_h\\
&\,= \frac{4(g-1)(g+q_h-2\gamma-1-q_{\pi})}{2\gamma-q_h}\chi_h>0.
\end{aligned}$$
This completes the proof.
\qed

\subsection{Proof of \autoref{thm-3-4}}\label{sec-pf-thm-3-4}
According to \autoref{thm-chx} and \cite[Theorem\,3]{xiao-87a}, one may assume that $q_f\geq 2$,
which implies that $g\geq 9q_f\geq 18$ by assumption.
\begin{enumerate}
	\item[$\bullet$] If $g\geq 4\gamma+1$, then according to \autoref{thm-double-1} we may assume that $q_f>\gamma$.
	Hence $f$ is an irregular double cover (cf. \autoref{def-irr-double}), and $g\geq 6\gamma+7$
	since $g\geq 9q_f\geq 9(\gamma+1)$. Therefore \eqref{conjectureequ} follows from \eqref{eqn-slope>6}.\vspace{1mm}
	\item[$\bullet$] If $4\gamma+1>g \geq 4\gamma-2$, then \eqref{conjectureequ} follows from \eqref{eqn-4.3-2}, since in this case
	\[\frac{4(g-1)(3g+1-4\gamma)}{(g+1-2\gamma)^2+4\gamma(2g+1-3\gamma)}>\frac{9(g-1)}{2g}\geq \frac{4(g-1)}{g-q_f}.\]
\end{enumerate}
This completes the proof.
\qed

%

\section{Examples}\label{sec-examples}
In this section, we construct counterexamples with $q_f=\frac{g+1}{2}$ violating Barja-Stoppino's conjecture.
\begin{example}\label{example-2}
We construct a relatively minimal fibration $f:\,X \to E$ of curves of odd genus $g\geq 3$ over an elliptic curve $E$ with $q_f=\frac{g+1}{2}$ and
\begin{equation*}
\lambda_f = 8-\frac{4}{g-1}<8=\frac{4(g-1)}{g-q_f}.
\end{equation*}

Let $E$ be any elliptic curve, and $C$ be any smooth curve of genus $g_0\geq 3$ which admits a double cover to $E$:
$$
\xymatrix{\eta:~C \ar[r]^-{2:1} & E.}$$
Let $\Delta\subseteq C\times C$ be the diagonal, $\sigma$ the involution on $C\times C$ defined by exchanging the two factors, and $X=C\times C/\langle\sigma\rangle$ the quotient surface.
Since $\sigma$ has no isolated fixed point, $X$ is smooth.
According to \cite[\S\,2.4-Example\,(b)]{lopes-pardini-12},
we know that $X$ is minimal of general type with $q(X)=g_0$ and
$$\chi(\calo_X)=\frac{(g_0-1)^2-(g_0-1)}{2},\qquad
\omega_X^2=4(g_0-1)^2-5(g_0-1).$$

To obtain a fibration on $X$, we consider first the fibration on $C\times C$ defined by
$$h:~C\times C \lra E,\qquad (x_1,x_2) \mapsto \eta(x_1)+\eta(x_2),$$
where `$+$' is the addition on the elliptic curve $E$.
It is easy to see that the morphism $h$ factors through $X$ and so induces a fibration $f:\,X \to E$:
$$\xymatrix{
 C\times C\ar[rr]^-{\pi}\ar[dr]_-{h}&&X\ar[dl]^-{f}\\
 &E&
}$$

It is clear that $f$ is relatively minimal since $X$ is minimal, and $q_f=q(X)-g(E)=g_0-1$.
To compute the genus $g$ of a general fiber of $f$, let $H$ be a general fiber of $h$, $F=\pi(H)\subseteq X$, $p=h(H)\in E$,
and $pr_1$ (resp. $pr_2$) be the projection of $C\times C$ to the first (resp. the second) factor $C$.
Then for any $(x_1,x_2)\in H$, one has $\eta(x_1)+\eta(x_2)=p$, i.e.,
$\eta(x_1)=-\eta(x_2)+p$. In other word, one has the following commutative diagram
$$\xymatrix{
 H\ar[rr]^{pr_1|_{H}}\ar[d]_-{pr_2|_{H}}&&C\ar[d]^-{\eta}\\
 C\ar[rr]^-{-\eta+p}&&E
}$$
The maps in the above diagram are all double covers, and the branch divisor of $pr_2|_{H}$ is
$$T=\big\{x\in C ~\big|~y:=-\eta(x)+p\text{~is a branch point of~}\eta:\,C \to E\big\},$$
which is of degree $4g_0-4$.
Hence one obtains that $g(H)=4g_0-3$.
Note that $H\cdot \Delta=8$. Thus by Hurwitz formula, we get that
$$2g(H)-2=2(2g(F)-2)+8.$$
Hence $g=g(F)=2g_0-3$.
Therefore $q_f=g_0-1=\frac{g+1}{2}$, and
$$\lambda_f=\frac{\omega_f^2}{\chi_f}=\frac{\omega_X^2}{\chi(\calo_X)}=\frac{8g_0-18}{g_0-2}=8-\frac{4}{g-1}<8=\frac{4(g-1)}{g-q_f},\text{~as required.}$$
\end{example}

\begin{example}\label{example-3}
We construct a relatively minimal double cover fibration $f:X \to \mathbb P^1$ of type $(g,\gamma)$ with $0<\gamma<(g+1)/2$, $q_f=(g+1)/2$, and
$$\lambda_f=8-\frac{4}{(g+1-2\gamma)\gamma}<8=\frac{4(g-1)}{g-q_f}.$$

Consider the ruled surface $\eta_0:\,\mathbb P^1 \times \mathbb P^1 \to \mathbb P^1.$
Let $\Lambda_0$ be a pencil on $\mathbb P^1 \times \mathbb P^1$ such that $H_0$ is a section of $\eta_0$ and $H_0^2=2$
for a general member $H_0\in \Lambda_0$.
Assume that $\Lambda_0$ has two distinct base-points, which are mapped to $\{p,\,p'\}\subseteq \mathbb P^1$ by $\eta_0$.
Let $\psi:\,\mathbb P^1 \to \mathbb P^1$ be a double cover branched exactly over $\{p,\,p'\}$,
and consider the Cartesian product
$$\xymatrix{\mathbb P^1 \times \mathbb P^1 \ar[rr] \ar[d]_-{\eta} && \mathbb P^1 \times \mathbb P^1 \ar[d]^{\eta_0}\\
\mathbb P^1 \ar[rr]^{\psi} && \mathbb P^1
}$$
Let $\Lambda$ be the pulling-back of $\Lambda_0$.
Then $\Lambda$ also has two distinct base-points ($H$ and $H'$ are tangent to each other at each of these two base-points for any two general $H,H'\in\Lambda$).
Let $\xi:\,\mathbb P^1 \times \mathbb P^1 \to \mathbb P^1$ be another fibration,
and $\{D_1,D_2,\cdots, D_{2\gamma+2}\}$ be $2\gamma+2$ fibers of $\xi$ such that these two base-points of $\Lambda$
are contained in $D_1$ and $D_2$ respectively.
Let $\Gamma \to \mathbb P^1$ be the double cover branched over $\big\{\xi(D_1),\xi(D_2),\cdots, \xi(D_{2\gamma+2})\}$,
and $$Y=\big(\mathbb P^1 \times \mathbb P^1\big) \times_{\mathbb P^1} \Gamma=\mathbb P^1 \times \Gamma$$ the fiber-product.
Let $\Lambda_Y$ be the inverse of $\Lambda$ on $Y$.
Then $\Lambda_Y$ has also exactly two base-points (each of the base-points is of multiplicity two).
Blowing up the base-points of the pencil $\Lambda_Y$, we obtain a fibration
$$\varphi:~ \wt Y \to \mathbb P^1.$$
By construction, the strict inverse images of $D_1$ and $D_2$ in $\wt Y$
are contracted by $\varphi$.
Let $\tilde p,\, \tilde p'$ be the images, and $\Gamma' \to \mathbb P^1$ the double cover branched over $\{\tilde p,\,\tilde p',\,x_1,\cdots,x_{2\gamma'}\}$,
where $\gamma'=(g+1)/2-\gamma$, and $x_1,\cdots,x_{2\gamma'}$ are distinct general points on $\mathbb P^1$.
Let $X$ be the normalization of the fiber-product $\wt Y \times_{\mathbb P^1} \Gamma'$ and $f:\,X \to \mathbb P^1$ the induced fibration as follows
$$\xymatrix{
\Gamma' \ar[d] && X \ar[ll]_-{\phi'} \ar[d]_-{\pi} \ar@/^1mm/"4,4"^-{f} \ar@/^2mm/"2,7"^-{\phi}&& &&\\
\mathbb P^1 && \wt Y \ar[ll]_-{\varphi} \ar[rr] && Y= \mathbb P^1\times \Gamma \ar[d]\ar[rr] \ar@/_5mm/"4,4"_-{h} && \Gamma \ar[d]\\
&&  && \mathbb P^1\times \mathbb P^1 \ar[rr]^-{\xi} \ar[ld]^-{\eta} && \mathbb P^1\\
&&&\mathbb P^1&&&
}$$
Let $\wt C_i=\varphi^*(x_i)$ be the fibers of $\varphi$ for $1\leq i\leq 2\gamma'$.
Then it is clear that
$$\omega_{\wt Y}^2=-8(\gamma-1)-2,\qquad \chi(\mathcal O_{\wt Y})=-(\gamma-1),
\qquad \omega_{\wt Y}\cdot \wt C_i=4\gamma-4.$$
Note that the fibers of $\varphi$ over $\tilde p$ and $\tilde p'$ are of multiplicity two.
Hence $\pi$ is a double cover branched exactly over $\wt R=\big\{\wt C_1,\cdots,\wt C_{2\gamma'}\big\}$.
Therefore, $f$ is a relatively minimal fibration of genus $g$, and
\begin{eqnarray*}
\omega_f^2&=&2\left(\omega_{\wt Y}+\frac12\wt R\right)^2+8(g-1)=8(g+1-2\gamma)\gamma-4,\\
\chi_f&=&2\chi(\mathcal O_{\wt Y})+\frac12\left(\omega_{\wt Y}+\frac12\wt R\right)\cdot\frac{\wt R}{2}+(g-1)=(g+1-2\gamma)\gamma.
\end{eqnarray*}
Hence $f$ has the required slope.
Note that $q(\wt Y)=\gamma$ and $q(X)-q(\wt Y)=\gamma'$
since $\pi$ is the normalization of the fiber-product $\wt Y \times_{\mathbb P^1} \Gamma'$.
Therefore $q_f=\gamma+\gamma'=(g+1)/2$ as required.
\end{example}

{\vspace{2mm}\bf Acknowledgements:}
We are grateful to M. Barja and L. Stoppino for the discussions and many useful suggestions.

\enddocument
\begin{thebibliography}{10}

\bibitem{acgh-85}
E.~Arbarello, M.~Cornalba, P.~A. Griffiths, and J.~Harris.
\newblock {\em Geometry of algebraic curves. {V}ol. {I}}, volume 267 of {\em
  Grundlehren der Mathematischen Wissenschaften [Fundamental Principles of
  Mathematical Sciences]}.
\newblock Springer-Verlag, New York, 1985.

\bibitem{barja-01}
M.~A. Barja.
\newblock {\em On the slope of bielliptic fibrations}.
\newblock Proc. Amer. Math. Soc., 129(7):\,1899--1906 (electronic), 2001.

\bibitem{barja-stoppino-08}
M.~{\'A}. Barja and L.~Stoppino.
\newblock {\em Linear stability of projected canonical curves with applications
  to the slope of fibred surfaces}.
\newblock J. Math. Soc. Japan, 60(1):\,171--192, 2008.

\bibitem{barja-zucconi-01}
M.~{\'A}. Barja and F.~Zucconi.
\newblock {\em On the slope of fibred surfaces}.
\newblock Nagoya Math. J., 164:\,103--131, 2001.

\bibitem{bhpv-04}
W.~P. Barth, K.~Hulek, C.~A.~M. Peters, and A.~Van~de Ven.
\newblock {\em Compact complex surfaces}, volume~4 of {\em Ergebnisse der
  Mathematik und ihrer Grenzgebiete. {\rm 3}. Folge.}
\newblock Springer-Verlag, Berlin, second edition, 2004.

\bibitem{cai-98}
J.-X. Cai.
\newblock {\em Irregularity of certain algebraic fiber spaces}.
\newblock Manuscripta Math., 95(3):\,273--287, 1998.

\bibitem{cornalba-harris-88}
M.~Cornalba and J.~Harris.
\newblock {\em Divisor classes associated to families of stable varieties, with
  applications to the moduli space of curves}.
\newblock Ann. Sci. \'Ecole Norm. Sup. (4), 21(3):\,455--475, 1988.

\bibitem{cornalba-stoppino-08}
M.~Cornalba and L.~Stoppino.
\newblock {\em A sharp bound for the slope of double cover fibrations}.
\newblock Michigan Math. J., 56(3):\,551--561, 2008.

\bibitem{fujita-78}
T.~Fujita.
\newblock {\em On {K}\"ahler fiber spaces over curves}.
\newblock J. Math. Soc. Japan, 30(4):\,779--794, 1978.

\bibitem{hartshorne-77}
R.~Hartshorne.
\newblock {\em Algebraic geometry}.
\newblock Springer-Verlag, New York-Heidelberg, 1977.
\newblock Graduate Texts in Mathematics, No. 52.

\bibitem{konno-93}
K.~Konno.
\newblock {\em Nonhyperelliptic fibrations of small genus and certain irregular
  canonical surfaces}.
\newblock Ann. Scuola Norm. Sup. Pisa Cl. Sci. (4), 20(4):\,575--595, 1993.

\bibitem{konno-94}
K.~Konno.
\newblock {\em On the irregularity of special non-canonical surfaces}.
Publ. Res. Inst. Math. Sci.  30  (4), 671--688, 1994.

\bibitem{konno-96}
K.~Konno.
\newblock {\em A lower bound of the slope of trigonal fibrations}.
\newblock Internat. J. Math., 7(1):\,19--27, 1996.

\bibitem{konno-99}
K.~Konno.
\newblock {\em Clifford index and the slope of fibered surfaces}.
\newblock J. Algebraic Geom., 8(2):\,207--220, 1999.

\bibitem{lu-zuo-13}
X.~Lu and K.~Zuo.
\newblock {\em On the slope of hyperelliptic fibrations with positive relative
  irregularity}.
\newblock Trans. Amer. Math. Soc., DOI: https://doi.org/10.1090/tran6682.

\bibitem{lu-zuo-14}
X.~Lu and K.~Zuo.
\newblock {\em The {O}ort conjecture on {S}himura curves in the {T}orelli locus
  of curves}.
\newblock arXiv:1405.4751, 2014.

\bibitem{lu-zuo-15-0}
X.~Lu and K.~Zuo.
\newblock {\em On the gonality and the slope of a fibred surface}.
\newblock  preprint, available at https://www.researchgate.net/publication/275960740, 2015.

\bibitem{lu-zuo-15b}
X.~Lu and K.~Zuo.
\newblock {\em On the {S}everi type inequalities for irregular surfaces}.
\newblock arXiv:1504.06569, 2015.

\bibitem{lopes-pardini-12}
M.~Mendes~Lopes and R.~Pardini.
\newblock {\em The geography of irregular surfaces}.
\newblock In {\em Current developments in algebraic geometry}, volume~59 of
  {\em Math. Sci. Res. Inst. Publ.}, pages 349--378. Cambridge Univ. Press,
  Cambridge, 2012.

\bibitem{pirola-89}
G.~P. Pirola.
\newblock {\em Curves on generic {K}ummer varieties}.
\newblock Duke Math. J., 59(3):\,701--708, 1989.

\bibitem{pirola-92}
G.~P. Pirola.
\newblock {\em On a conjecture of {X}iao}.
\newblock J. Reine Angew. Math., 431:\,75--89, 1992.

\bibitem{stoppino-08} L. Stoppino.
\newblock {\em Slope inequalities for fibred surfaces via GIT}.
\newblock Osaka J. Math.  45(4):\,1027--1041, 2008.

\bibitem{xiao-87a}
G.~Xiao.
\newblock {\em Fibered algebraic surfaces with low slope}.
\newblock Math. Ann., 276(3):\,449--466, 1987.

\bibitem{xiao-87}
G.~Xiao.
\newblock {\em Irregularity of surfaces with a linear pencil}.
\newblock Duke Math. J., 55(3):\,597--602, 1987.

\bibitem{xiao-91}
G.~Xiao.
\newblock {\em {$\pi_1$} of elliptic and hyperelliptic surfaces}.
\newblock Internat. J. Math., 2(5):\,599--615, 1991.
\end{thebibliography}
